\renewcommand{\aa}{{m}}
\newcommand{\bb}{{\overline m}}
\newcommand\cc{{\cal c}}
\newcommand{\ds}{\displaystyle}
\newcommand{\e}{\varepsilon}
\renewcommand{\H}{{\cal H}}
\renewcommand\iint{\displaystyle\int_{-1}^1}
\renewcommand{\k}{k}
\renewcommand{\ll}{\hat L}
\newcommand{\m}[1]{\mathbbm{#1}}
\newcommand{\pnu}{\partial_\nu}
\newcommand{\pp}{\Pi}
\newcommand{\ps}{{\partial_s}}
\newcommand{\py}{{\partial_y}}
\newcommand{\q}[1]{\mathcal{#1}}
\newcommand\RR{{\cal R}}
\renewcommand\SS{{\cal S}}
\newcommand{\vc}[2]{\begin{pmatrix} #1\\#2\end{pmatrix}}
\DeclareMathOperator{\Id}{\mathrm{Id}}
\theoremstyle{plain}
\newtheorem{thm}{Theorem}
\newtheorem*{thm*}{Theorem}
\newtheorem{prop}{Proposition}[section]
\newtheorem{cor}[prop]{Corollary}
\newtheorem{lem}[prop]{Lemma}
\newtheorem{defi}[prop]{Definition}
\theoremstyle{definition}
\theoremstyle{remark}
\newtheorem*{nb}{Remark}
\newtheorem*{claim}{Claim}
\def\blfootnote{\xdef\@thefnmark{}\@footnotetext}
\title{\bf Construction of a multi-soliton blow-up solution to the semilinear wave equation in one space dimension}
\date{April 24, 2012}
\author{Raphaël Côte\\
{\it \small CNRS UMR 7640 CMLS \'Ecole Polytechnique}\\
Hatem Zaag\\
{\it \small CNRS UMR 7539 LAGA Universit\'e Paris 13}}
\begin{document}

\maketitle   

\begin{abstract}
We consider the semilinear wave equation with power nonlinearity in one space dimension. Given a blow-up solution with a characteristic point, we refine the blow-up behavior first derived by Merle and Zaag. We also refine the geometry of the blow-up set near a characteristic point, and show that except may be for one exceptional situation, it is never symmetric with the respect to the characteristic point. Then, we show that all blow-up modalities predicted by those authors do occur. More precisely, given any integer $k\ge 2$ and $\zeta_0 \in \m R$, we construct a blow-up solution with a characteristic point $a$, such that the asymptotic behavior of the solution near $(a,T(a))$ shows 
a decoupled sum of $k$ solitons with alternate signs, whose centers (in the hyperbolic geometry) have $\zeta_0$ as a center of mass, for all times.
\end{abstract}

\medskip

{\bf MSC 2010 Classification}:  
35L05, 35L71, 35L67,
35B44, 35B40


\medskip

{\bf Keywords}: Semilinear wave equation, blow-up, one-dimensional case, radial case, characteristic point,  multi-solitons.

\section{Introduction}
We consider the one-dimensional semilinear wave equation
\begin{equation}\label{eq:nlw_u}
\left\{
\begin{array}{l}
\partial^2_{t} u =\partial^2_{x} u+|u|^{p-1}u,\\
u(0)=u_0\mbox{ and }\partial_t u(0)=u_1,
\end{array}
\right.
\end{equation}
where $u(t):x\in{\m R} \rightarrow u(x,t)\in{\m R}$, $p>1$, $u_0\in \rm H^1_{\rm loc,u}$
and $u_1\in \rm L^2_{\rm loc,u}$ with
 $\|v\|_{\rm L^2_{\rm loc,u}}^2=\ds\sup_{a\in {\m R}}\int_{|x-a|<1}|v(x)|^2dx$ and $\|v\|_{\rm H^1_{\rm loc,u}}^2 = \|v\|_{\rm L^2_{\rm loc,u}}^2+\|\nabla v\|_{\rm L^2_{\rm loc,u}}^2$.\\
We solve equation \eqref{eq:nlw_u} locally in time in the space ${\rm H}^1_{\rm loc,u}\times {\rm L}^2_{\rm loc, u}$ (see Ginibre, Soffer and Velo \cite{GSVjfa92}, Lindblad and Sogge \cite{LSjfa95}).
For the existence of blow-up solutions, we have the following blow-up criterion from Levine \cite{Ltams74}: If $(u_0,u_1)\in H^1\times L^2({\m R})$ satisfies
\[
\int_{{\m R}}\left(\frac 12 |u_1(x)|^2+\frac 12|\partial_x u_0(x)|^2-\frac 1{p+1}|u_0(x)|^{p+1}\right)dx<0,\label{levine}
\]
then the solution of \eqref{eq:nlw_u} cannot be global in time. More blow-up results can be found in
Caffarelli and Friedman \cite{CFtams86, CFarma85},
Alinhac \cite{Apndeta95, Afle02} and Kichenassamy and Littman \cite{KL1cpde93, KL2cpde93}.

\bigskip

If $u$ is an arbitrary blow-up solution of \eqref{eq:nlw_u}, we define (see for example Alinhac \cite{Apndeta95}) a 1-Lipschitz curve $\Gamma=\{(x,T(x))\}$ 
such that the maximal influence domain $D$ of $u$ (or the domain of definition of $u$) is written as 
\begin{equation}\label{defdu}
D=\{(x,t)\;|\; t< T(x)\}.
\end{equation}
$\bar T=\inf_{x\in {\m R}}T(x)$ and $\Gamma$ are called the blow-up time and the blow-up graph of $u$. 
A point $x_0$ is a non characteristic point 
if
\begin{equation}\label{nonchar}
\mbox{there are }\delta_0\in(0,1)\mbox{ and }t_0<T(x_0)\mbox{ such that }
u\;\;\mbox{is defined on }{\mathcal C}_{x_0, T(x_0), \delta_0}\cap \{t\ge t_0\}
\end{equation}
where ${\cal C}_{\bar x, \bar t, \bar \delta}=\{(x,t)\;|\; t< \bar t-\bar \delta|x-\bar x|\}$. We denote by $\RR$ (resp. $\SS$) the set of non characteristic (resp. characteristic) points.

\bigskip

In order to study the asymptotic behavior of $u$ near a given $(x_0,T(x_0))\in \Gamma$, it is convenient to introduce similarity variables defined for all $x_0\in\m R$ and $ T_0\in \m R$ by
\begin{equation} \label{def:w}
w_{x_0, T_0}(y,s) = (T_0 -t)^{\frac{2}{p-1}} u(x,t), \quad y = \frac{x-x_0}{T_0-t}, \quad s = - \log (T_0-t).
\end{equation}
If $T_0=T(x_0)$, we will simply write $w_{x_0}$ instead of $w_{x_0, T(x_0)}$. 
The function $w=w_{x_0}$ satisfies the following equation for all $y\in(-1,1)$ and $s\ge - \log T(x_0)$: 
\begin{gather} \label{eq:nlw_w}
\partial^2_sw = \q L w - \frac{2(p+1)}{(p-1)^2} w + |w|^{p-1} w - \frac{p+3}{p-1}\partial_s w - 2y \partial^2_{y,s}w, \\
\text{where} \quad
\q L w = \frac{1}{\rho} \partial_y (\rho (1-y^2) \py w) 
\quad \text{and} \quad \rho = (1-y^2)^{\frac{2}{p-1}}.
\end{gather}
This equation can be put in the following first order form:
\begin{equation} \label{eq:nlw_wws}
\partial_s \begin{pmatrix} w_1 \\ w_2 \end{pmatrix} =
\begin{pmatrix} w_2 \\ \displaystyle \q L w_1 - \frac{2(p+1)}{(p-1)^2} w_1 + |w_1|^{p-1} w_1 - \frac{p-3}{p-1} w_2 - 2y \py w_2 \end{pmatrix}.
\end{equation}
The Lyapunov functional for equation \eqref{eq:nlw_w}
\begin{equation}\label{defenergy}
E(w(s))= \iint \left(\frac 12 \left(\partial_s w\right)^2 + \frac 12  \left(\partial_y w\right)^2 (1-y^2)+\frac{(p+1)}{(p-1)^2}w^2 - \frac 1{p+1} |w|^{p+1}\right)\rho dy
\end{equation}
is defined for $(w,\partial_s w) \in \H$ where  
\begin{equation}\label{defnh0}
\H = \left\{(q_1,q_2) 
\;\; \middle| \;\;\|(q_1,q_2)\|_{\H}^2\equiv \int_{-1}^1 \left(q_1^2+\left(q_1'\right)^2  (1-y^2)+q_2^2\right)\rho dy<+\infty\right\}.
\end{equation}
We also introduce the projection of the space $\H$ \eqref{defnh0} on the first coordinate:
\[
\H_0 = \left\{r\in H^1_{loc}\;\;\middle|\;\;\|r\|_{\H_0}^2\equiv \int_{-1}^1 \left(r^2+\left(r'\right)^2  (1-y^2)\right)\rho dy<+\infty\right\}.
\]
Finally, we introduce for all $|d|<1$ the following stationary solutions of \eqref{eq:nlw_w} (or solitons) defined by 
\begin{equation}\label{defkd}
\kappa(d,y)=\kappa_0 \frac{(1-d^2)^{\frac 1{p-1}}}{(1+dy)^{\frac 2{p-1}}}\mbox{ where }\kappa_0 = \left(\frac{2(p+1)}{(p-1)^2}\right)^{\frac 1{p-1}} \mbox{ and }|y|<1.
\end{equation}

In \cite{MZjfa07, MZcmp08, MZajm11, MZisol10} (see also the note \cite{MZxedp10}), Merle and Zaag gave an exhaustive description of the geometry of the blow-up set on the one hand, and the asymptotic behavior of solutions near the blow-up set on the other hand (they also extended their results to the radial case with conformal or subconformal power nonlinearity outside the origin in \cite{MZbsm11}):

\medskip

\noindent{\it - Geometry of the blow-up set}: In \cite[Theorem 1]{MZcmp08} (and the following remark),  and \cite[Theorems 1 and 2]{MZisol10} (and the following remark), the following is proved:

\medskip

{\label{old}\it (i) $\RR$ is a non empty open set, and $x\mapsto T(x)$ is of class $C^1$ on $\RR$;

(ii) $\SS$ is made of isolated points, and given $x_0\in\SS$, if $0<|x-x_0|\le \delta_0$, then
\begin{equation}\label{chapeau0}
\frac{|x-x_0|}{C_0|\log(x-x_0)|^{\frac{(k(x_0)-1)(p-1)}2}}\le  T(x)- T(x_0)+|x-x_0| \le \frac{C_0|x-x_0|}{|\log(x-x_0)|^{\frac{(k(x_0)-1)(p-1)}2}}
\end{equation}
for some $\delta_0>0$ and $C_0>0$, where $k(x_0)\ge 2$ is an integer. Moreover, estimate \eqref{chapeau0} remains true after differentiation.
}

\medskip

{\it - Classification of the blow-up behavior near the singularity in $(x_0, T(x_0))$}\label{classification}: From \cite[Corollary 4 page 49, Theorem 3 page 48]{MZjfa07}, \cite[Theorem 1 page 58]{MZcmp08} and \cite[Theorem 6]{MZajm11}, we recall the asymptotic behavior of $u(x,t)$ near the singular point $(x_0, T(x_0))$, according to the fact that $x_0$ is a non-characteristic point or not:

\medskip

{\it
(iii) 
There exist $\mu_0>0$ and $C_0>0$ such that for all $x_0\in\RR$, there exist $\theta(x_0)=\pm 1$ and $s_0(x_0)\ge - \log T(x_0)$ such that for all $s\ge s_0$:
\begin{equation}\label{profile}
\left\|\vc{w_{x_0}(s)}{\partial_s w_{x_0}(s)}-\theta(x_0)\vc{\kappa(T'(x_0))}{0}\right\|_{\H}\le C_0 e^{-\mu_0(s-s_0)}.
\end{equation}
Moreover, $E(w_{x_0}(s)) \to E(\kappa_0)$ as $s\to \infty$.

(iv) \label{old4}
If $x_0\in\SS$, then it holds that
\begin{equation}\label{cprofile00}
\left\|\vc{w_{x_0}(s)}{\ps w_{x_0}(s)} - \theta_1\vc{\ds\sum_{i=1}^{k(x_0)} (-1)^{i+1}\kappa(d_i(s))}0\right\|_{\H} \to 0\mbox{ and }E(w_{x_0}(s))\to k(x_0)E(\kappa_0)
\end{equation}
as $s\to \infty$, where the integer $k(x_0)\ge 2$ has been defined in \eqref{chapeau0}, for some $\theta_1=\pm 1$
and continuous $d_i(s)=-\tanh \zeta_i(s)\in (-1,1)$ for $i=1,...,k(x_0)$. Moreover, for some $C_0>0$, for all $i=1,...,k(x_0)$ and $s$ large enough, we have
\begin{equation}\label{equid00}
\left|\zeta_i(s)-\left(i-\frac{(k(x_0)+1)}2\right)\frac{(p-1)}2\log s\right|\le C_0. 
\end{equation}
}

\medskip

Our first result refines the expansion \eqref{equid00} up to the order $o(1)$. Introduce
\begin{equation}\label{solpart}
\bar \zeta_i(s) = \left(i-\frac{(k+1)}2\right)\frac{(p-1)}2\log s + \bar\alpha_i(p,k)
\end{equation}
where the sequence $(\bar\alpha_i)_{i=1,\dots,k}$ is uniquely determined by the fact that $(\bar \zeta_i(s))_{i=1,\dots,k}$ is an explicit solution with zero center of mass for the following ODE system:
\begin{equation} \label{eq:tl}
 \frac 1{c_1}\dot \zeta_i = e^{ - \frac{2}{p-1} (\zeta_i - \zeta_{i-1}) } - e^{- \frac{2}{p-1} (\zeta_{i+1} - \zeta_i) }, 
\end{equation}
where $c_1=c_1(p)>0$ and $\zeta_0(s)\equiv \zeta_{k+1}(s) \equiv 0$ (see \eqref{defalphai} below for a proof of this fact). Note that $c_1=c_1(p)>0$ is the constant appearing in system \eqref{eqz}, itself inherited from Proposition 3.2 of \cite{MZajm11}.
With this definition, we can state our first result:
\begin{thm}[Refined asymptotics near a characteristic point] \label{propref}
Consider $u(x,t)$ a blow-up solution of equation \eqref{eq:nlw_u} and $x_0$ a characteristic point with $k(x_0)$ solitons. Then, there is 
$\zeta_0(x_0)\in \m R$ 
such that estimate \eqref{cprofile00} holds with
\begin{equation}\label{refequid}
d_i(s) = -\tanh \zeta_i(s) \quad \text{and} \quad \zeta_i(s) = \bar \zeta_i(s) + \zeta_0,
\end{equation}
where $\bar \zeta_i(s)$ is introduced above in \eqref{solpart}. 
\end{thm}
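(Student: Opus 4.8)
The argument lives entirely at the level of the modulation parameters; the PDE enters only through the work of Merle and Zaag. Fix $x_0\in\SS$ and write $k=k(x_0)$. By \eqref{cprofile00} together with Proposition~3.2 of \cite{MZajm11} we may work with a sign $\theta_1=\pm1$, the remainder $q(s)=(w_{x_0}(s),\ps w_{x_0}(s))-\theta_1\big(\sum_{i=1}^k(-1)^{i+1}\kappa(d_i(s)),0\big)$, which satisfies $\|q(s)\|_\H\to0$, with $C^1$ parameters $\zeta_1(s)<\dots<\zeta_k(s)$ (so $d_i=-\tanh\zeta_i$), the a priori separation bound \eqref{equid00}, and the modulation system \eqref{eqz}, which I abbreviate as
\[
\tfrac1{c_1}\dot\zeta_i=e^{-\frac2{p-1}(\zeta_i-\zeta_{i-1})}-e^{-\frac2{p-1}(\zeta_{i+1}-\zeta_i)}+\mathrm{err}_i(s),\qquad i=1,\dots,k,
\]
with the boundary convention of \eqref{eq:tl} and with $\mathrm{err}_i$ controlled by the energy estimates of \cite{MZajm11} (in terms of $\|q(s)\|_\H$ and faster interaction cross‑terms), in particular $\mathrm{err}_i(s)=o(s^{-1})$. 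By \eqref{equid00}, $\zeta_{i+1}-\zeta_i=\frac{p-1}2\log s+O(1)$, so each interaction term is of size $\asymp s^{-1}$; this suggests passing to the slow time $\tau=\log s$ and to the bounded defects $\eta_i(\tau)=\zeta_i(s)-(i-\tfrac{k+1}2)\tfrac{p-1}2\,\tau$. A direct substitution turns the system into
\[
\frac{d\eta_i}{d\tau}=c_1\Big(e^{-\frac2{p-1}(\eta_i-\eta_{i-1})}-e^{-\frac2{p-1}(\eta_{i+1}-\eta_i)}\Big)-\Big(i-\tfrac{k+1}2\Big)\tfrac{p-1}2+s\,\mathrm{err}_i(s),\qquad i=1,\dots,k,
\]
whose first two terms form an \emph{autonomous} system; moreover this autonomous system is a gradient flow $\dot\eta=-\nabla\tilde W(\eta)$ with $\tilde W(\eta)=\frac{c_1(p-1)}2\sum_{i=1}^{k-1}e^{-\frac2{p-1}(\eta_{i+1}-\eta_i)}+\frac{p-1}2\sum_{i=1}^k(i-\tfrac{k+1}2)\eta_i$, whose critical set is exactly the line $\{\bar\alpha+c\,\mathbf 1\mid c\in\m R\}$, where $\bar\alpha=(\bar\alpha_i)$ is the configuration of \eqref{solpart} (existence and uniqueness up to translation being the content of \eqref{defalphai}).

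\noindent\textbf{Shape: the gap variables.} Put $r_i=\eta_{i+1}-\eta_i$ for $1\le i\le k-1$; by \eqref{equid00} the vector $r$ eventually lies in a fixed compact set, and differencing the previous display gives
\[
\frac{dr}{d\tau}=-A\,\nabla_r\hat W(r)+s\,(\delta\mathrm{err})(s),
\]
where $A=\mathrm{tridiag}(-1,2,-1)$ is the $(k-1)\times(k-1)$ Dirichlet discrete Laplacian (positive definite), $\hat W(r)=\frac{c_1(p-1)}2\sum_{i=1}^{k-1}e^{-\frac2{p-1}r_i}+\ell(r)$ with $\ell$ a linear form is \emph{strictly convex}, and $s\,(\delta\mathrm{err})(s)\to0$. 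Hence $\hat W$ has a unique critical point, $r^\ast=(\bar\alpha_{i+1}-\bar\alpha_i)_{1\le i\le k-1}$ (the componentwise equations $c_1 e^{-\frac2{p-1}r_i^\ast}=\frac{p-1}4\,i(k-i)$ being those of \eqref{defalphai}); it is a global, exponentially attracting equilibrium of the autonomous gap flow (the linearization $-A\,\mathrm{Hess}\,\hat W(r^\ast)$, a product of two positive operators, has spectrum in $\{\operatorname{Re}\lambda<0\}$). Since the forcing tends to zero, a Grönwall estimate in $\tau$ preserves this: $r_i(\tau)\to\bar\alpha_{i+1}-\bar\alpha_i$, equivalently
\[
\zeta_i(s)-\zeta_j(s)-\big(\bar\zeta_i(s)-\bar\zeta_j(s)\big)\longrightarrow0\qquad(s\to\infty),\quad1\le i,j\le k .
\]

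\noindent\textbf{Position: the center of mass.} It remains to fix the translation mode $\bar\eta(\tau)=\frac1k\sum_{i=1}^k\eta_i(\tau)$, which \eqref{equid00} keeps bounded. Summing the $\eta_i$ equations, the interaction terms telescope (boundary terms vanishing) and $\sum_i(i-\tfrac{k+1}2)=0$, so the autonomous part contributes nothing and
\[
\bar\eta(s_2)-\bar\eta(s_1)=\frac1k\int_{s_1}^{s_2}\sum_{i=1}^k\mathrm{err}_i(s)\,ds .
\]
Thus $\bar\eta$ converges — to a real number which we \emph{define} to be $\zeta_0=\zeta_0(x_0)$ — precisely when $\sum_i\mathrm{err}_i\in L^1(ds)$, and this integrability is, I expect, the heart of the matter: the raw Merle–Zaag bounds give only $\mathrm{err}_i=o(s^{-1})$, which is not summable, and even an improved power decay $\|q(s)\|_\H=O(s^{-1/2})$ (bootstrapped from the monotonicity of $E$ and $E(w_{x_0}(s))\searrow kE(\kappa_0)$) would yield only the borderline $\mathrm{err}_i=O(s^{-1})$. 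One therefore has to exploit the explicit algebraic structure of the error terms, showing that the non‑integrable part of $\sum_i\mathrm{err}_i$ is either an exact $s$‑derivative of a vanishing quantity or cancels by the reflection symmetry of the alternating‑sign configuration; this is the step I expect to require the most care, everything else being a soft ODE argument. Granting it, $\eta_i(\tau)\to\bar\alpha_i+\zeta_0$ for every $i$, i.e. $\zeta_i(s)=\bar\zeta_i(s)+\zeta_0+o(1)$, which is \eqref{refequid}; finally, replacing $d_i(s)=-\tanh\zeta_i(s)$ by $-\tanh(\bar\zeta_i(s)+\zeta_0)$ in \eqref{cprofile00} changes the soliton sum by at most $\sum_{i=1}^k\big(\sup_{\zeta\in\m R}\big\|\partial_\zeta[\kappa(-\tanh\zeta,\cdot)]\big\|_\H\big)\,|o(1)|\to0$, the supremum being finite (a standard estimate of Merle and Zaag), so \eqref{cprofile00} continues to hold with the $d_i$ of \eqref{refequid}. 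This completes the proof.
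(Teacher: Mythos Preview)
Your decomposition into ``shape'' (the gaps $r_i=\eta_{i+1}-\eta_i$) and ``position'' (the barycenter $\bar\eta$) is exactly the right one, and your treatment of the shape part is a genuinely different and rather clean alternative to the paper. The paper does not use the gradient structure of $\tilde W$; instead it introduces the quantities $b_i=\sigma_{i-1}(e^{-(\xi_i-\xi_{i-1})}-1)$, proves by a somewhat delicate combinatorial argument (Proposition~\ref{lyapunov} and Lemma~\ref{raphael}) that $B-b:=\max_i b_i-\min_i b_i$ is a strict Lyapunov function, exhibits an invariant compact (Proposition~\ref{propcpt}), and then invokes LaSalle/Lyapunov. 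Your route---recognising $\dot r=-A\nabla\hat W(r)$ with $A=\mathrm{tridiag}(-1,2,-1)$ positive definite and $\hat W$ strictly convex, hence $\hat W$ itself is a Lyapunov function with a unique minimum---gets the same conclusion with less work, and the local exponential rate follows as you say from $\mathrm{spec}(-A\,\mathrm{Hess}\,\hat W(r^\ast))\subset\{\mathrm{Re}\,\lambda<0\}$. The subsequent passage from the autonomous to the perturbed flow is handled identically in both approaches (compare your ``Gr\"onwall in $\tau$'' with Part~2 of the paper's proof).

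Where you go wrong is in the position step, and the error is a bibliographic one: you understate the Merle--Zaag bound. You quote $\mathrm{err}_i=o(s^{-1})$ and then worry at length about the non-integrability, speculating about exact derivatives or symmetry cancellations. In fact Proposition~3.2 of \cite{MZajm11} gives $R_i=O(s^{-1-\eta})$ for some fixed $\eta=\eta(p)>0$ (this is recorded in the paper as \eqref{eqz}), so $\sum_i\mathrm{err}_i\in L^1(ds)$ is immediate and your barycenter integral converges with rate $O(s^{-\eta})$; see \eqref{limitz}. No additional structure of the error is needed, and the ``heart of the matter'' you anticipate simply does not arise. With this correction your argument is complete; the final replacement of $\zeta_i(s)$ by $\bar\zeta_i(s)+\zeta_0$ in \eqref{cprofile00} is exactly the paper's \eqref{contkd}.
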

\begin{nb}
As one can see from \eqref{refequid} and \eqref{solpart}, $\zeta_0$ is the center of mass of the $\zeta_i(s)$ for any $s\ge - \log T(x_0)$.
\end{nb}
\begin{nb}
Following the analysis of Merle and Zaag in \cite{MZbsm11}, our result holds with the same proof for the higher-dimensional radial case
\begin{equation}\label{equr}
\partial_t^2 u = \partial_r^2 u +\frac{(N-1)}r \partial_ ru +|u|^{p-1}u,
\quad \text{with} \quad 
p\le 1+\frac 4{N-1} \mbox{ if }N\ge 2,
\end{equation}
provided that we consider a characteristic point different from the origin.
\end{nb}

Theorem \ref{propref} enables us to refine estimate \eqref{chapeau0} proved in \cite{MZisol10} for $T(x)$ and $T'(x)$ when $x$ is near a characteristic point. More precisely, we have the following:
\begin{cor}[Refined behavior for the blow-up set near a characterstic point]\label{corref}
Consider $u(x,t)$ a blow-up solution of equation \eqref{eq:nlw_u} and $x_0$ a characteristic point with $k(x_0)$ solitons and $\zeta_0(x_0)\in \m R$ as center of mass of the solitons' center as shown in \eqref{cprofile00} and \eqref{refequid}. Then, 
\begin{align}
T'(x)& = -\theta(x)\left(1-\frac{\gamma e^{-2\theta(x)\zeta_0(x_0)}(1+o(1))}{|\log|x-x_0||^{\frac{(k(x_0)-1)(p-1)}2}}\right)\label{cor1}\\
T(x)&=T(x_0)-|x-x_0|+\frac{\gamma e^{-2\theta(x)\zeta_0(x_0)}|x-x_0|(1+o(1))}{|\log|x-x_0||^{\frac{(k(x_0)-1)(p-1)}2}}\label{cor0}
\end{align}
as $x\to x_0$, where $\theta(x) = \frac{x-x_0}{|x-x_0|}$ and $\gamma=\gamma(p)>0$.
\end{cor}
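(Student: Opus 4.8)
The plan is to transfer the refined asymptotic information about the solitons' centers $\zeta_i(s)$ (Theorem \ref{propref}) into geometric information about the curve $\Gamma$ near $x_0$, following the strategy already used by Merle and Zaag to derive \eqref{chapeau0} from \eqref{equid00}, but now keeping track of the $o(1)$ term. The starting point is that for $x$ near $x_0$ (so $x\in\RR$ by item (ii)), the function $w_x$ converges to a single soliton $\theta(x)\kappa(T'(x))$ by \eqref{profile}, while $w_{x_0}$ behaves like the decoupled $k$-soliton sum of \eqref{cprofile00}–\eqref{refequid}. The geometric content comes from relating the two similarity-variable renormalizations based at $x$ and at $x_0$: for a point $(x,t)$ on the light cone from $x$, expressed in the self-similar variables $(y,s)$ centered at $x_0$, one has explicit change-of-variable formulas linking $T(x)-T(x_0)$, $x-x_0$, and the position $y$ where the cone from $x$ meets a fixed slice $s=$ const centered at $x_0$. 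The key is that, on that slice, $w_{x_0}$ is dominated by the soliton whose center $d_i(s)$ is \emph{closest} to the relevant value of $y$; matching $\kappa(T'(x))$ against that nearest soliton $\kappa(d_{i}(s))$ forces $T'(x)$ to be close to $\pm d_i(s)$ with $s\sim -\log|x-x_0|$.

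Concretely, I would proceed as follows. First, fix the correspondence between $x\to x_0$ and the relevant time $s=s(x)$: from the scaling \eqref{def:w} and the structure of $\Gamma$ one gets $s(x) = -\log|x-x_0| + O(1)$, and the sign $\theta(x)=(x-x_0)/|x-x_0|$ selects which extremal soliton (the one with $i=1$ or $i=k$, whose center $\zeta_i(s)$ has the largest modulus) governs the behavior of $w_x$. Second, invoke Theorem \ref{propref}: the outermost center satisfies
\[
\zeta_{i}(s) = \pm\Big(\tfrac{k-1}{2}\Big)\tfrac{p-1}{2}\log s + \bar\alpha_{i} + \zeta_0 + o(1),
\]
hence $d_i(s) = -\tanh\zeta_i(s)$, and since $|\zeta_i(s)|\to\infty$ we have $1-|d_i(s)| = 2 e^{-2|\zeta_i(s)|}(1+o(1))$, which after substituting the expansion of $\zeta_i(s)$ and $s\sim|\log|x-x_0||$ yields
\[
1-|d_i(s)| = \frac{\gamma' e^{-2\theta(x)\zeta_0}(1+o(1))}{|\log|x-x_0||^{\frac{(k-1)(p-1)}{2}}}
\]
for an explicit constant $\gamma'=\gamma'(p)>0$ coming from the $\bar\alpha_i$ and the $(\log s)$-exponent. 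Third, convert $1-|d_i(s)|$ into $1-|T'(x)|$: from \eqref{profile} and the known stability of the soliton family, $T'(x)$ is determined by the parameter $d$ of the soliton $w_x$ converges to, and the geometric change of variables between the cones from $x$ and from $x_0$ shows $-\theta(x)T'(x) = d_i(s) \cdot \theta(x) + o(1-|d_i|)$ in a way that makes the error negligible against the main term; this gives \eqref{cor1}. Finally, \eqref{cor0} follows from \eqref{cor1} by integration: writing $T(x)-T(x_0) = \int_{x_0}^{x} T'(\sigma)\,d\sigma$ and using that the correction term in $T'$ is, up to $1+o(1)$, of the form $\gamma e^{-2\theta\zeta_0}|\log|\sigma-x_0||^{-(k-1)(p-1)/2}$ (slowly varying), the integral produces exactly $\gamma e^{-2\theta(x)\zeta_0}|x-x_0|\,|\log|x-x_0||^{-(k-1)(p-1)/2}(1+o(1))$.

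The main obstacle I anticipate is \textbf{Step three}: making rigorous the claim that it is precisely the \emph{outermost} soliton parameter $d_i(s)$ (and not some average, or an interaction-perturbed quantity) that pins down $T'(x)$, with an error that is $o$ of the already-small gap $1-|d_i(s)|$. This requires a careful quantitative analysis of how the $k$-soliton decomposition centered at $x_0$ restricts to the (shifted, rescaled) slice seen from $x$, using the decoupling/separation estimate \eqref{equid00}–\eqref{refequid} to show the non-extremal solitons contribute only lower-order terms on the relevant part of the cone, together with the $C^1$ regularity and the differentiated version of the Merle–Zaag estimates to control $T'$ rather than just $T$. Once this matching is done with the right error bounds, everything else is bookkeeping with the explicit formula \eqref{defkd} for $\kappa$ and the expansion \eqref{solpart}–\eqref{refequid}; I would also double-check that the constant $\gamma$ in \eqref{cor1} and \eqref{cor0} is the same, which it must be since \eqref{cor0} is obtained from \eqref{cor1} purely by integration.
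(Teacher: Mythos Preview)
Your strategy matches the paper's: treat $x<x_0$ first (then recover $x>x_0$ by the reflection $u^\sharp(x^\sharp,t)=(-1)^ku(-x^\sharp,t)$, which flips $\zeta_0\mapsto-\zeta_0$ via \eqref{antis}), feed the refined expansion $1-d_1(s)\sim 2e^{2(\bar\alpha_1+\zeta_0)}s^{-\gamma_1}$ from Theorem~\ref{propref} into the existing proof of \eqref{chapeau0} from \cite{MZisol10}, and integrate to pass from \eqref{cor1} to \eqref{cor0}.

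Regarding your flagged obstacle in Step~3: the paper does not redo this analysis but invokes the machinery of \cite{MZisol10} directly. Specifically, Proposition~3.10 there (restated here as Lemma~\ref{lemtrapped}) shows that for $x<0$ small and a suitable time $s_k=|\log|x||+L_k$, the function $w_x(s_k)$ is $\epsilon$-close in $\H$ to a \emph{single} soliton $-\kappa(\bar d_1^*(s_k))$, where $\bar d_1^*$ is $d_1(S_k)$ corrected by a term $\bar\nu_1$; the key new estimate is that $|\bar\nu_1|/(1-\bar d_1)\le C\epsilon$, so this correction is negligible against $1-|d_1|$. Then the trapping criterion from \cite{MZjfa07} converts this into $|\arg\tanh T'(x)-\arg\tanh\bar d_1^*(s_k)|\le\epsilon$, and since $\epsilon>0$ is arbitrary and $1-(T'(x))^2,\,1-(\bar d_1^*)^2\le C|\log|x||^{-\gamma_1}$, one gets $|T'(x)-\bar d_1^*|=o(|\log|x||^{-\gamma_1})$, which is exactly the matching with error $o$ of the gap that you were worried about. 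So your instinct that the non-extremal solitons and interaction corrections are lower order is right, but the rigorous justification is borrowed wholesale from \cite{MZisol10} rather than reproved.
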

\begin{nb}Unlike what one may think from the less accurate estimate \eqref{chapeau0}, we surprisingly see from this corollary that the blow-up set is {\it never} symmetric with respect to a characteristic point $x_0$, except maybe when $\zeta_0(x_0)=0$.
\end{nb}
\begin{nb} As usual in blow-up problems, the geometrical features of the blow-up set (here, $T(x)$ and $T'(x)$) are linked to the parameters of the asymptotic behavior of the solution (here, $k(x_0)$ the number of solitons in similarity variables and $\zeta_0(x_0)$, the location of their center of mass).
\end{nb}

Following this classification given for arbitrary blow-up solutions, we asked the question whether all these blow-up modalities given above in {\it (iii)} and {\it (iv)} (refined by Theorem \ref{propref}) do occur or not.

\medskip

As far as non-characteristic points are concerned, the answer is easy.
 Indeed, any blow-up solution (for example those constructed by Levine's criterion given on page \pageref{levine}) has non-characteristic points, as stated above in fact {\it (i)} of page \pageref{old} .\\
 Regarding the asymptotic behavior, any profile given in \eqref{profile} does occur. Indeed, note first that for any $d\in(-1,1)$, the function 
\begin{equation}\label{particular}
u(x,t)= (1-t)^{-\frac 2{p-1}}\kappa\left(d,\frac x{1-t}\right)=\frac{\kappa_0(1-d^2)^{\frac 1{p-1}}}{(1-t +dx)^{\frac 2{p-1}}}
\end{equation}
is a particular solution to equation \eqref{eq:nlw_u} defined for all $(x,t) \in \m R^2$ such that $1-t +dx>0$, blowing up on the curve $T(x) = 1+dx$ and such that for any $x_0\in {\m R}$, $T'(x_0)=d$ and $w_{x_0}(y,s) = \kappa(d,y)=\kappa(T'(x_0),y)$, and \eqref{profile} is trivially true. However, this is not a solution of the Cauchy problem at $t=0$, in the sense that it is not even defined for all $x \in{\m R}$ when $t=0$. This is in fact not a problem thanks to the finite speed of propagation. Indeed, performing a truncation of \eqref{particular} at $t=0$, the new solution will coincide with \eqref{particular} for all $|x_0|\le R$ and $t\in[0,T(x_0))$ for some $R>0$, and \eqref{profile} holds for the new solution as well, for all $|x_0|<R$.

\medskip

Now, for characteristic points, the answer is much more delicate. Unlike what was commonly believed after the work of Caffarelli and Friedman \cite{CFtams86, CFarma85}, Merle and Zaag proved in\cite[Proposition 1]{MZajm11} the {\it existence of solutions} of \eqref{eq:nlw_u} such that
\[
\SS\neq \emptyset.
\]
Since that solution was odd by construction, the number of solitons appearing in the decomposition \eqref{cprofile00} has to be even. No other information on the number of solitons was available. After this result, the following question remained open :

\medskip

{\it Given an integer $k\ge 2$, is there a blow-up solution of equation \eqref{eq:nlw_u} with a characteristic point $x_0$ such that the decomposition \eqref{cprofile00} holds with $k$ solitons?}

\medskip

In this paper, we show that the answer is {\it yes}, and we do better, by prescribing the location of the center of mass of the $\zeta_i(s)$ in \eqref{equid00}. This is our second result:
\begin{thm}[Existence of a solution with prescribed blow-up behavior at a characteristic point]\label{mainth} For any integer $k\ge 2$ and $\zeta_0\in \m R$, there exists a blow-up solution $u(x,t)$ to equation \eqref{eq:nlw_u} in $\rm H^1_{\rm loc,u}\times \rm L^2_{\rm loc,u}(\m R)$ with $0\in\SS$ such that 
\begin{equation}\label{cprofile0}
\left\|\vc{w_0(s)}{\ps w_0(s)} - \vc{\ds\sum_{i=1}^{k} (-1)^{i+1}\kappa(d_i(s))}0\right\|_{\H} \to 0\mbox{ as }s\to \infty,
\end{equation}
with
\begin{equation}\label{refequid1}
d_i(s) = -\tanh \zeta_i(s), \quad
\zeta_i(s) = \bar \zeta_i(s) + \zeta_0
\end{equation}
and $\bar \zeta_i(s)$ defined in \eqref{solpart}.
\end{thm}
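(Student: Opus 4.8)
The construction is the standard "soliton ansatz + modulation + topological shooting" scheme. I would look for $u$ such that in similarity variables around the origin, $W(s) = (w_0(s), \partial_s w_0(s))$ decomposes as
\[
W(s) = \sum_{i=1}^k (-1)^{i+1}\begin{pmatrix}\kappa(d_i(s))\\0\end{pmatrix} + \begin{pmatrix}q_1(s)\\q_2(s)\end{pmatrix},
\]
with the centers parametrized by $d_i(s) = -\tanh\zeta_i(s)$. First, I would recall from \cite{MZajm11} (Proposition 3.2 and the derived system \eqref{eqz}) the dynamics of the modulation parameters $\zeta_i(s)$: after projecting \eqref{eq:nlw_wws} onto the manifold of multi-solitons, the $\zeta_i$ obey a Toda-type system of the form $\dot\zeta_i/c_1 = e^{-\frac{2}{p-1}(\zeta_i - \zeta_{i-1})} - e^{-\frac{2}{p-1}(\zeta_{i+1}-\zeta_i)} + \text{(errors)}$. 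Setting $\zeta_i(s) = \bar\zeta_i(s) + \zeta_0 + \xi_i(s)$ with $\bar\zeta_i$ the explicit zero-center-of-mass solution from \eqref{solpart}, one linearizes around $\bar\zeta_i + \zeta_0$: the linearized Toda operator has a one-dimensional kernel (the translation mode $\partial_{\zeta_0}$, i.e. the free center of mass) and the rest is a coercive/dissipative direction once one works on the orthogonal complement. The nonmodulated remainder $(q_1,q_2)$ should be controlled in $\H$ by an energy estimate using the Lyapunov functional $E$ and the spectral gap for the linearized flow around a single $\kappa(d)$ established by Merle–Zaag; the soliton interactions are exponentially small in the separations $\zeta_{i+1}-\zeta_i \sim \frac{(p-1)}{2}\log s$, hence of size $s^{-1}$-like, integrable/controllable against the $\H$-norm decay.

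**Key steps, in order.** (1) Fix $k$ and $\zeta_0$. Construct an approximate solution: take initial data at some large time $s_0$ equal to $\sum (-1)^{i+1}\kappa(d_i(s_0))$ with $\zeta_i(s_0) = \bar\zeta_i(s_0) + \zeta_0$ exactly, plus a finite-dimensional perturbation $b \in \m R^m$ meant to kill the finitely many unstable/nonnegative directions of the linearized operator around the multi-soliton (the unstable modes come from the $w_2 = \lambda w_1$ scaling direction attached to each soliton, so $m = O(k)$). (2) Set up the bootstrap: define a "shrinking set" $V_A(s)$ — small parameters $\|(q_1,q_2)\|_\H \le A f(s)$, $|\xi_i(s)| \le A g(s)$ for appropriate decaying $f,g$ (powers of $1/s$ up to logs), with the unstable coordinates $b(s)$ allowed to be $O(A^2 s^{-?})$ but sign-controlled. (3) Show the a priori estimates: as long as $W(s) \in V_A$, all components except the finitely many unstable ones are strictly improved — the $\H$-part by the energy/dissipation estimate plus the spectral gap (Merle–Zaag), and the modulation part $\xi_i$ by the linearized Toda analysis, where one must check that the center-of-mass mode does not drift (it is pinned to $\zeta_0$ by the exact choice of initial data together with a conserved-quantity/symmetry argument, or by modulating it away as an extra parameter). (4) Topological argument: if the conclusion failed for every choice of $b$, the exit map $b \mapsto$ (rescaled unstable coords at exit time) would, by the outgoing (transversality) property on $\partial V_A$, yield a continuous retraction of a ball onto its boundary sphere — a contradiction with Brouwer. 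Hence some $b$ keeps $W(s) \in V_A$ for all $s$, giving $\|(q_1,q_2)\|_\H \to 0$ and $\zeta_i(s) = \bar\zeta_i(s) + \zeta_0 + o(1)$. (5) Check that the solution thus obtained in similarity variables corresponds to an actual $\rm H^1_{\rm loc,u}\times\rm L^2_{\rm loc,u}$ solution of \eqref{eq:nlw_u} with $0$ a characteristic point: the exponential decay toward the $k$-soliton sum (with $k\ge 2$) forces, via the classification of \cite{MZjfa07, MZcmp08, MZajm11}, that $0\in\SS$ — indeed $0$ cannot be non-characteristic because fact (iii) would give convergence to a single $\kappa(d)$, incompatible with $E(w_0(s))\to kE(\kappa_0)$ for $k\ge2$.

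**Main obstacle.** The delicate point is the modulation analysis of the $\zeta_i$: one must extract from the PDE the Toda system \emph{with sharp enough error terms} so that the perturbation $\xi_i(s) = \zeta_i(s) - \bar\zeta_i(s) - \zeta_0$ genuinely goes to $0$ (not merely stays bounded, which is all \eqref{equid00} gives). This requires (a) computing the interaction terms $\langle \text{nonlinearity}, \text{null directions}\rangle$ to leading order — showing they reproduce $c_1(e^{-\frac{2}{p-1}(\zeta_i-\zeta_{i-1})} - e^{-\frac{2}{p-1}(\zeta_{i+1}-\zeta_i)})$ with a controlled remainder — and (b) understanding the linearized Toda flow around $(\bar\zeta_i)$: one needs that, restricted to the zero-center-of-mass subspace, the linearization is dissipative (all relevant eigenvalues have the right sign after the $\log s$ time-change $\tau = \log s$), so that $\xi_i$ decays. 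The center-of-mass direction is the one neutral mode; it is \emph{not} stabilized by the dynamics, which is exactly why $\zeta_0$ survives as a free parameter and why the data must be prepared with the center of mass set to $\zeta_0$ from the start (one then verifies it is exactly preserved, using that the map $\zeta\mapsto\sum(-1)^{i+1}\kappa(-\tanh\zeta_i)$ interacts with the flow so as to keep $\sum\zeta_i$ fixed to leading order, and absorbing the lower-order drift by allowing $\zeta_0$ itself to be a modulation parameter converging to the prescribed value). Coupling this with the infinite-dimensional energy estimate for $(q_1,q_2)$ — the two feed into each other through the quadratic interaction and source terms — is where the bulk of the technical work lies.
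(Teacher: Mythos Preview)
Your overall scheme --- modulation around a multi-soliton, energy control of the remainder, topological shooting on the finitely many unstable directions --- is correct and matches the paper's architecture. Two points of divergence are worth flagging, one cosmetic and one substantive.

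\textbf{Cosmetic.} In the paper the modulation is done with the \emph{generalized} solitons $\kappa^*(d_i,\nu_i)$ rather than the pure $\kappa(d_i)$; the $\nu_i$ themselves are the $k$ unstable parameters (each $\kappa^*(d,\mu e^s)$ is an exact heteroclinic solution, so $\nu_i$ captures the scaling-instability direction you call $b$). The shooting is on $(\nu_{1,0},\dots,\nu_{k,0})$ directly, and the shrinking set is defined in terms of $(q,\nu_i,\phi_i)$ where $\phi_i$ are the coordinates of $\zeta_i-\bar\zeta_i$ along the eigenvectors of the linearized Toda matrix. This is the same idea as your $b\in\m R^m$, just packaged differently.

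\textbf{Substantive gap: prescribing $\zeta_0$.} Your plan is to set the initial center of mass equal to the target $\zeta_0$ and argue it stays pinned. This does not work as stated. The center-of-mass mode $\phi_1$ is \emph{neutral} for the linearized Toda flow (eigenvalue $0$), so it is neither damped nor amplified; meanwhile the PDE error terms in $\dot\zeta_i$ are $O(s^{-1-\eta})$ but have no reason to sum to zero across $i$. Consequently $\phi_1(s)$ drifts by an integrable but \emph{uncontrolled} amount and converges to some limit $l_0$ that depends on the whole trajectory (on $q$, on the chosen $\nu_{i,0}$, etc.). The construction in similarity variables therefore produces a multi-soliton with \emph{some} center of mass $\zeta_0^\sharp$, not the one you prescribed. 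Your parenthetical fix (``allow $\zeta_0$ to be a modulation parameter converging to the prescribed value'') does not close: neutral directions cannot be handled by the Brouwer/transversality argument, and a continuity argument in the initial $\zeta_0$ is entangled with the (non-unique, non-constructive) choice of $\nu_{i,0}$ from the topological step.

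The paper resolves this in a completely different way: it first runs the construction with $\zeta_0$ unprescribed (obtaining some $\zeta_0^\sharp$), and then applies a \emph{Lorentz transform} to the resulting solution $u^\sharp$. In similarity variables the Lorentz boost with parameter $d$ acts on the soliton centers by $\arg\tanh d_i \mapsto \arg\tanh d_i - \arg\tanh d$, i.e.\ it shifts all $\zeta_i$ by the same constant and hence translates the center of mass. Choosing $d=\tanh(\zeta_0^\sharp-\zeta_0)$ moves $\zeta_0^\sharp$ to the desired $\zeta_0$. This exploits a symmetry of the equation rather than trying to control a neutral mode dynamically, and is the missing idea in your outline.
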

\begin{nb}
Note from \eqref{refequid1} and \eqref{solpart} that the barycenter of $\zeta_i(s)$ is fixed, in the sense that
\begin{equation}\label{barycenter}
\frac{\zeta_1(s)+ \dots +\zeta_k(s)}k= \frac{\bar\zeta_1(s)+ \dots +\bar\zeta_k(s)}k+\zeta_0=\zeta_0,\;\;\forall s\ge -\log T(0).
\end{equation}
\end{nb}
\begin{nb} Note that this result uses our argument for Theorem \ref{propref}, in particular our analysis of ODE \eqref{eqz} given in section \ref{secref} below. As we pointed out in a remark following Theorem \ref{propref}, our result holds also in the higher-dimensional radial case \eqref{equr}, in the sense that for any $r_0>0$, we can construct a solution of equation \eqref{equr} such that its similarity variables version $w_{r_0}(y,s)$ behaves according to \eqref{cprofile0} with the parameters $d_i(s)$ given by \eqref{refequid1}.
\end{nb}
\begin{nb}
We are unable to say whether this solution has other characteristic points or not. In particular, we have been unable to find a solution with $\SS$ exactly equal to $\{0\}$. Nevertheless, let us remark that from the finite speed of propagation, we can prescribe more characteristic points, as follows:
\end{nb}
\begin{cor}[Prescribing more characteristic points]\label{cormore} Let $I=\{1,...,n_0\}$ or $I=\m N$ and for all $n\in I$, $x_n\in \m R$, $T_n>0$, $k_n \ge 2$ and $\zeta_{0,n}\in \m R$ such that
\begin{equation*}
x_n+T_n<x_{n+1}-T_{n+1}.
\end{equation*}
Then, there exists a blow-up solution $u(x,t)$ of equation \eqref{eq:nlw_u} in $\rm H^1_{\rm loc,u}\times \rm L^2_{\rm loc,u}(\m R)$ with $\{x_n\;|\; n\in I\} \subset \SS$, $T(x_n)=T_n$ and for all $n\in I$, 
\begin{equation*}
\left\|\vc{w_{x_n}(s)}{\ps w_{x_n}(s)} - \vc{\ds\sum_{i=1}^{k_n} (-1)^{i+1}\kappa(d_{i,n}(s))}0\right\|_{\H} \to 0\mbox{ as }s\to \infty,
\end{equation*} 
with
\begin{equation*}
\forall i=1,\dots,k_n,\;\;d_{i,n}(s) = -\tanh \zeta_{i,n}(s),\;\;
\zeta_{i,n}(s) = \bar \zeta_i(s) + \zeta_{0,n}
\end{equation*}
and $\bar \zeta_i(s)$ defined in \eqref{solpart}.
\end{cor}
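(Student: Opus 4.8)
The idea is to build $u$ as the solution of \eqref{eq:nlw_u} whose initial data is obtained by patching together, on disjoint spatial intervals, the initial data of $|I|$ suitably rescaled and translated copies of the solution furnished by Theorem~\ref{mainth}, and then to conclude by finite speed of propagation. \textbf{Normalising one copy.} Fix $n\in I$ and apply Theorem~\ref{mainth} with $k=k_n$ and $\zeta_0=\zeta_{0,n}$, obtaining a solution $v$ of \eqref{eq:nlw_u} with $0\in\SS$, blow-up time $T_v(0)=:T^\ast>0$, and asymptotics \eqref{cprofile0}--\eqref{refequid1}. Since \eqref{eq:nlw_u} is invariant under the space translation $x\mapsto x-x_n$ and under the scaling $v\mapsto v_\lambda(x,t):=\lambda^{2/(p-1)}v(\lambda x,\lambda t)$, which turns the blow-up graph $T_v$ into $x\mapsto\lambda^{-1}T_v(\lambda x)$ and preserves the characteristic/non-characteristic dichotomy, the function $u^{(n)}(x,t):=v_{\lambda_n}(x-x_n,t)$ with $\lambda_n:=T^\ast/T_n$ solves \eqref{eq:nlw_u}, has $x_n\in\SS$, and has blow-up graph $T^{(n)}$ with $T^{(n)}(x_n)=T_n$. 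A direct computation from \eqref{def:w} yields the identity $w^{(n)}_{x_n}(y,s)=w^{v}_0(y,\,s-\log\lambda_n)$; since $\bar\zeta_i(s-\log\lambda_n)=\bar\zeta_i(s)+o(1)$ as $s\to\infty$ and $d\mapsto\kappa(d,\cdot)$ is uniformly continuous into $\H_0$ (a continuity that comes out of the analysis of \eqref{eqz} in Section~\ref{secref}), the decomposition \eqref{cprofile0} holds for $u^{(n)}$ at $(x_n,T_n)$ with exactly $\zeta_{i,n}(s)=\bar\zeta_i(s)+\zeta_{0,n}$.

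\textbf{Patching and propagating.} The hypothesis $x_n+T_n<x_{n+1}-T_{n+1}$ says that the intervals $J_n:=[x_n-T_n,x_n+T_n]$ are pairwise disjoint and increasingly ordered; pick $\chi_n\in C^\infty_c(\m R)$ with $\chi_n\equiv1$ on $J_n$ and with pairwise disjoint, locally finite supports, and put $(u_0,u_1):=\big(\sum_{n\in I}\chi_n u^{(n)}(\cdot,0),\ \sum_{n\in I}\chi_n\partial_t u^{(n)}(\cdot,0)\big)$, which lies in $\rm H^1_{\rm loc,u}\times\rm L^2_{\rm loc,u}(\m R)$; let $u$ be the associated solution. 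Since $\chi_n\equiv1$ on $J_n$ and the other supports miss $J_n$, the data $(u_0,u_1)$ agrees with that of $u^{(n)}$ on $J_n$, so finite speed of propagation gives that $u$ is defined on, and equals, $u^{(n)}$ throughout the open backward light cone $\Delta_n:=\{(x,t):0<t<T_n-|x-x_n|\}$, whose apex is exactly $(x_n,T_n)$; in particular $T(x_n)=T_n$. Parametrising as in \eqref{def:w} by $x=x_n+ye^{-s}$, $t=T_n-e^{-s}$, one has $x-t=x_n-T_n+(1+y)e^{-s}>x_n-T_n$ and $x+t=x_n+T_n-(1-y)e^{-s}<x_n+T_n$ for every $|y|<1$ and $s>-\log T_n$, so all such points lie in $\Delta_n$; hence $w_{x_n}\equiv w^{(n)}_{x_n}$ on $(-1,1)\times(-\log T_n,\infty)$, and the stated behaviour of $u$ at $x_n$ follows from the first step.

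\textbf{Characteristic character.} It remains to check $x_n\in\SS$. Were $x_n\in\RR$, property \emph{(iii)} would force $E(w_{x_n}(s))\to E(\kappa_0)$, whereas property \emph{(iv)}, applied to $u^{(n)}$ at the characteristic point $x_n$, gives $E(w^{(n)}_{x_n}(s))\to k(x_n)E(\kappa_0)$ with the integer $k(x_n)\ge2$; since $w_{x_n}\equiv w^{(n)}_{x_n}$ and $E(\kappa_0)>0$, these two limits are incompatible, a contradiction. (For $I$ finite every step above is immediately legitimate; for $I=\m N$ one uses in addition that the $J_n$, and hence the supports of the $\chi_n$, can be taken locally finite.)

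\textbf{Main obstacle.} The gluing is routine once the one-point case is settled: it only uses finite speed of propagation together with the disjointness of the cones $\Delta_n$ built into the hypothesis $x_n+T_n<x_{n+1}-T_{n+1}$. The substantive point is the normalisation of Step~1 — hitting the prescribed blow-up time $T_n$ forces the scaling $v\mapsto v_{\lambda_n}$, and one must verify that in self-similar variables this amounts merely to the constant time shift $s\mapsto s-\log\lambda_n$, which moves the soliton centres $\bar\zeta_i(s)$ by only $o(1)$ and therefore leaves \eqref{cprofile0}--\eqref{refequid1} intact; establishing this cleanly is precisely what the continuity estimates for $\kappa$ and for the solution of the ODE \eqref{eqz} from Section~\ref{secref} are there to supply.
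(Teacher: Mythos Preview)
Your proof is correct and follows the same strategy as the paper: use the translation and scaling invariances of \eqref{eq:nlw_u} to upgrade Theorem~\ref{mainth} so that the blow-up location and time are also prescribed, then glue the resulting initial data on the disjoint intervals $[x_n-T_n,x_n+T_n]$ and invoke finite speed of propagation. You fill in more detail than the paper does (the verification that scaling shifts $s$ by a constant and hence perturbs $\bar\zeta_i(s)$ only by $o(1)$, and the energy dichotomy argument for $x_n\in\SS$), but the approach is identical.
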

\begin{nb}
Again, we are unable to construct a solution with $\SS = \{x_n\;|\; n\in I\}$.
\end{nb}

As one can see from \eqref{cprofile0} and \eqref{refequid1}, the solution we have just constructed in Theorem \ref{mainth} behaves like the sum of $k$ solitons as $s\to \infty$. In the literature, such a solution is called a {\it multi-soliton solution}. Constructing multi-soliton solutions is an important problem in nonlinear dispersive equations. It has already be done for the $L^2$ critical and subcritical nonlinear Schr\"odinger equation (NLS) (see Merle \cite{Mcmp90} and Martel and Merle \cite{MMihp06}), the $L^2$ critical and subcritical generalized Korteweg de Vries equation (gKdV) (see Martel \cite{Majm05}), and for the $L^2$ supercritical case both for (gKdV) and (NLS) equations in C\^ote, Martel and Merle \cite{CMMrmia11}.

\medskip

More generally, constructing a solution to some Partial Differential Equation with a prescribed behavior (not necessarily multi-solitons solutions) is an important question. We solved this question for (gKdV) in C\^ote \cite{Cjfa06,Cdmj07}, and also for parabolic equations exhibiting blow-up, like the semilinear heat equation with Merle in \cite{MZdmj97, MZcras96}, the complex Ginzburg-Landau equation in \cite{Zihp98} and with Masmoudi in \cite{MZjfa08}, or a gradient perturbed heat equation with Ebde in \cite{EZsema11}. In all these cases, the prescribed behavior shows a convergence to a limiting profile in some rescaled coordinates, as the time approaches the blow-up time.

\medskip

Surprisingly enough, in both the parabolic equations above and the supercritical dispersive equations treated in \cite{CMMrmia11}, the same topological argument is crucial to control the directions of instability. This will be the case again for the semilinear wave equation \eqref{eq:nlw_u} under consideration. Our strategy relies on two steps: 

\medskip

- Thanks to a dynamical system formulation, we show that controlling the similarity variables version $w(y,s)$ \eqref{eq:nlw_w} around the expected behavior \eqref{cprofile0} reduces to the control of the unstable directions, whose number is finite. This dynamical system formulation is essentially the same as the one that allowed us to show that all characteristic points are isolated in \cite{MZisol10}. Then, we solve the finite dimensional problem thanks to a topological argument based on index theory. This solves the problem without allowing us to prescribe the center of mass as required in \eqref{barycenter}. 

\medskip

- Performing a Lorentz transform on the solution we have just constructed, we are able to choose the center of mass as in \eqref{barycenter}.

\bigskip

Notice that in \cite{CMMrmia11, Mcmp90, Majm05}, the construction was done backward in time and used a compactness argument. We emphasize that this strategy is not available here, because we work in a light cone only; so we construct the desired solution forward in time directly via the topological argument.

\bigskip

This paper is organized in three sections: In Section \ref{secref}, we refine the blow-up behavior at a characteristic point 
and the geometry of the blow-up set 
and prove Theorem \ref{propref}
together with Corollary \ref{corref}. 
Then, in Section \ref{secweak}, we construct a multi-soliton solution in similarity variables. 
Finally, in Section \ref{seclorentz}, we translate this construction in the $u(x,t)$ formulation, and then use a Lorentz transform to prescribe the center of mass to finish the proof of Theorem \ref{mainth} and Corollary \ref{cormore}.

\section{Refined asymptotics near a characteristic point}\label{secref}
In this section, we prove Theorem \ref{propref} and Corollary \ref{corref}, 
refining the description given in \cite{MZajm11} for the blow-up behavior at a characteristic point
together with the geometry of the blow-up set. 
We proceed in two subsections, the first devoted to the proof of Theorem \ref{propref} and the second to the proof of Corollary \ref{corref}.

\subsection{Refined blow-up behavior near a characteristic point}

\begin{proof}[Proof of Theorem \ref{propref}] Consider $u(x,t)$ a blow-up solution of equation \eqref{eq:nlw_u} and $x_0\in \SS$. From the result of \cite{MZajm11} recalled in (iv) in page \pageref{old4}, we know that estimate \eqref{cprofile00} holds for some $k=k(x_0)\ge 2$ and $|\theta_1|=1$,
with continuous functions $d_i(s)= -\tanh \zeta_i(s)\in(-1,1)$ satisfying \eqref{equid00}. In order to conclude, we claim that it is enough to refine this estimate by showing that 
\begin{equation}\label{goal0}
\zeta_i(s) = \bar \zeta_i(s) + \zeta_0+o(1)\mbox{ as }s\to \infty,
\end{equation}
where $(\bar \zeta_i(s))_i$ \eqref{solpart} is the explicit solution to system \eqref{eq:tl}. Indeed, once this is proved, we can slightly modify the $\zeta_i(s)$ by setting $\zeta_i(s)$ exactly equal to $\bar \zeta_i(s) + \zeta_0$ (as required in \eqref{refequid}) and still have \eqref{cprofile00} hold, thanks to the following continuity result for the solitons $\kappa(d)$ \eqref{defkd}:
\begin{equation}\label{contkd}
\|\kappa(d_1)-\kappa(d_2)\|_{\q H_0} \le C|\arg\tanh d_1-\arg \tanh d_2|
\end{equation}
(see Lemma \ref{contk*} below for a more general statement). Thus, our goal in this section is to show \eqref{goal0}, where the $\zeta_i(s)=-\arg\tanh d_i(s)$ are the parameters shown in \eqref{cprofile00} proved in \cite{MZajm11}.

\medskip

From \cite[Proposition 3.2]{MZajm11}, we recall that $(\zeta_i(s))_{i=1,\dots,k}$ is in fact a $C^1$ function satisfying the following ODE system for $i=1,\dots,k$:
\begin{equation}\label{eqz}
\frac 1{c_1} \zeta_i' = e^{-\frac 2{p-1}(\zeta_i-\zeta_{i-1})}
-e^{-\frac 2{p-1}(\zeta_{i+1}-\zeta_i)}+R_i\mbox{ where }
R_i= O\left(\frac 1{s^{1+\eta}}\right)\mbox{ as }s\to \infty,
\end{equation}
for some explicit constant $c_1=c_1(p)>0$, and a fixed small constant $\eta=\eta(p)>0$, with the convention that $\zeta_0(s) \equiv -\infty$ and $\zeta_{k+1}(s) \equiv +\infty$. (Systems similar to \eqref{eqz} also appear in other contexts, for example, in the boundary layer formation for the real Ginzburg-Landau equation, see \cite{CaPeCPAM89,FuHaJDDE89}).

\medskip

We proceed in two parts: we first study system \eqref{eqz} without the rest term (i.e. when all $R_i\equiv 0$), then we take into account the full system and conclude the proof.

\bigskip

{\bf Part 1: The ODE system with no rest term}

Our system \eqref{eqz} with no rest terms is stated in \eqref{eq:tl}.

We proceed in 4 steps: We first give explicit solutions for system \eqref{eq:tl}. Then, we study its critical points and give a Lyapunov functional for it. In the third step, we find a compact in $\m R^k$ stable by the flow of system \eqref{eq:tl}. Finally, applying Lyapunov's theorem we show that any bounded solution is asymptotically close to one of the explicit solutions given in the first step.

\medskip

\emph{Step 1: Explicit solutions for system \eqref{eq:tl}}\label{step1} Let us introduce
\begin{equation}\label{defgi}
\ds \gamma_i = (p-1) \left(-i + \frac{k+1}{2} \right).
\end{equation}
We look for a solution of system \eqref{eq:tl} obeying the following ansatz
\begin{equation}\label{ansatz}
\zeta_i(s) = -\frac{\gamma_i}2\log s + \alpha_i, 
\end{equation}
we get the following necessary and sufficient condition: for all $i=2,\dots,k$,
\[
e^{-\frac{2}{p-1} (\alpha_i - \alpha_{i-1})} =  \frac{1}{2c_1} \ds\sum_{j=1}^{i-1} \gamma_j = -\frac{1}{2c_1} \ds\sum_{j=i}^k \gamma_j = \frac{(p-1)}{4c_1} (i-1)(k+1-i),
\]
which makes a one parameter family of solutions, for example characterized by its center of mass $\frac 1k\sum_{i=1}^k \zeta_i(s)$ (which in fact remains independent of time). Fixing the center of mass to be zero, we obtain the following particular solution
\begin{equation*}
\bar \zeta_i(s) = -\frac{\gamma_i}{2} \log s + \bar\alpha_i, 
\end{equation*}
already defined in \eqref{solpart}, where $\alpha_i=\bar \alpha_i(p,k)$ are uniquely determined by 
\begin{equation}\label{defalphai}
\displaystyle\sum_{i=1}^k \bar \alpha_i =0,\;\;e^{-\frac{2}{p-1} (\bar \alpha_i - \bar \alpha_{i-1})}= \frac{(p-1)}{4c_1} (i-1)(k+1-i),\;i= 2,\dots,k.
\end{equation}
In particular, all the other solutions obeying the ansatz \eqref{ansatz} are obtained as
\begin{equation}\label{onep}
\zeta_i(s) = \bar \zeta_i(s) + \zeta_0
\end{equation}
where $\zeta_0$ is the constant value of the center of mass $\frac 1k\sum_{i=1}^k \zeta_i(s)$.
Let us remark that
\begin{equation}\label{antis}
\forall s>0,\;\;\bar \zeta_i(s) = -\bar \zeta_{k-i}(s).
\end{equation}
Indeed, from the definition \eqref{defgi} of $\gamma_i$ and system \eqref{eq:tl}, we see that $(-\bar \zeta_{k-i}(s))_i$ is also a solution of system \eqref{eq:tl} obeying the ansatz \eqref{ansatz}. Therefore, as in \eqref{onep}, we have for all $i=1,\dots,k$ and $s>0$, $-\bar \zeta_{k-i}(s)=\bar \zeta_i(s) +\bar \zeta_0$, where 
\[ \bar \zeta_0= \frac 1k \sum_{i=1}^k (-\bar \zeta_{k-i}(s))= - \frac 1k\sum_{j=1}^k \bar \zeta_j(s)=0, \]
and \eqref{antis} follows. 

\medskip
 
\emph{Step 2: Critical points and a Lyapunov functional for system \eqref{eq:tl}}

We now look at a perturbation $\zeta(s) = (\zeta_i(s))_{i=1,\dots,k}$ of this solution. Denote
\begin{equation}\label{defxi0}
 \xi_i(\tau) = \frac 2{p-1}(\zeta_i(s) - \bar \zeta_i(s))\mbox{ where }\tau =\log s
 \end{equation}
and assume that the maximal solution exists on some interval $[0,\tau_\infty)$ where either $\tau_\infty$ is finite or $\tau_\infty=\infty$.  
We assume that
\begin{equation}\label{zerobarycenter} 
\sum_{i=1}^k \xi_i(0) =0
\end{equation}
Then, $\sum_{i=1}^k \xi_i(\tau) =0$ for all $\tau \in [0,\tau_\infty)$ and the $\xi_i$ satisfy the system 
\begin{equation} \label{eq:ptl}
\begin{cases}
\displaystyle \dot \xi_1 = -\sigma_1 (e^{-  (\xi_{2} - \xi_1) }-1), \\
\displaystyle \dot \xi_i = \vphantom{\int_f^f} \sigma_{i-1} (e^{ -  (\xi_i - \xi_{i-1}) } -1) -\sigma_i (e^{-  (\xi_{i+1} - \xi_i) }-1),
 & i=2, \dots, k-1 \\
\displaystyle \dot \xi_k = \sigma_{k-1} (e^{ - (\xi_{k} - \xi_{k-1}) } -1).
\end{cases}
\end{equation}
where 
\begin{equation}\label{defsi}
\sigma_i=\frac{i(k-i)}2.
\end{equation}
Denote 
\begin{equation}\label{defbi}
\displaystyle b_i(\tau) = \sigma_{i-1}(e^{ -  (\xi_i(\tau) - \xi_{i-1}(\tau)) } -1)\mbox{ for }i=2, \dots,k-1,\;\;b_1=b_{k+1}=0,
\end{equation}
 so that 
\begin{equation}\label{xbb}
\forall i=1,\dots,k,\;\;\dot \xi_i = b_i-b_{i+1}
\end{equation}
 and consider
\begin{equation} \label{def:bB}
b(\tau) = \min \{ b_i(\tau) | i = 1, \dots, k+1 \}, \quad B(\tau) = \max \{ b_i(\tau) | i = 1, \dots, k+1 \}.
\end{equation}
Note from \eqref{defbi} that
\begin{equation}\label{minbB}
b(\tau)\le 0\le B(\tau).
\end{equation}
\begin{prop}[The critical point and Lyapunov functionals of system \eqref{eq:ptl} under the condition \eqref{zerobarycenter}] \label{lyapunov}
The only critical point of 
system \eqref{eq:ptl} under the condition \eqref{zerobarycenter} 
is $\xi_i=0$.
Moreover, the functions
$B$ and $-b$ are Lyapunov functionals for 
system \eqref{eq:ptl}. In addition, $B-b$ is (strictly) decreasing, except if $\xi_1(\tau) \equiv \cdots \equiv \xi_k(\tau) \equiv 0$.
\end{prop}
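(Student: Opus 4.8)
The plan is to analyze system \eqref{eq:ptl} directly, exploiting its gradient-like structure via the quantities $b(\tau)$ and $B(\tau)$ from \eqref{def:bB}. First I would establish the claim about critical points: at a critical point, $\dot\xi_i = b_i - b_{i+1} = 0$ for all $i$, so all $b_i$ are equal; but $b_1 = b_{k+1} = 0$ by \eqref{defbi}, hence every $b_i = 0$, which by \eqref{defbi} forces $\xi_i - \xi_{i-1} = 0$ for $i = 2,\dots,k$, i.e. all $\xi_i$ are equal, and then the constraint \eqref{zerobarycenter} (which is preserved along the flow, as noted) pins them to $\xi_i = 0$.

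Next, the monotonicity of $B$. I would differentiate $b_i$: from \eqref{defbi}, $\dot b_i = -\sigma_{i-1} e^{-(\xi_i - \xi_{i-1})}(\dot\xi_i - \dot\xi_{i-1}) = -\sigma_{i-1} e^{-(\xi_i-\xi_{i-1})}\big((b_i - b_{i+1}) - (b_{i-1} - b_i)\big)$. The key observation is the sign: $\sigma_{i-1} e^{-(\xi_i-\xi_{i-1})} > 0$ and $\sigma_{i-1}(e^{-(\xi_i-\xi_{i-1})}-1) = b_i$, so $e^{-(\xi_i-\xi_{i-1})} = 1 + b_i/\sigma_{i-1}$; in any case the prefactor is strictly positive. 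Now at a time $\tau$ where $B(\tau) = b_{i_0}(\tau)$ is attained at an interior index $i_0 \in \{2,\dots,k\}$ (endpoints give $B = 0$, which needs separate care), we have $b_{i_0} \ge b_{i_0 - 1}$ and $b_{i_0} \ge b_{i_0 + 1}$, so $(b_{i_0} - b_{i_0+1}) - (b_{i_0-1} - b_{i_0}) = 2b_{i_0} - b_{i_0+1} - b_{i_0-1} \ge 0$, whence $\dot b_{i_0} \le 0$. Since $B$ is the max of finitely many $C^1$ functions, its upper Dini derivative is bounded by $\max\{\dot b_i : b_i = B\} \le 0$, giving $B$ nonincreasing. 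The same argument applied to $-b$ (minimum at an index where $b_{i_0}$ is smallest, reversing inequalities) shows $-b$ is nonincreasing, i.e. $b$ is nondecreasing. Combining, $B - b$ is nonincreasing. The care needed for the case $B = 0$ attained only at endpoints: then by \eqref{minbB} all interior $b_i \le 0$; if some interior $b_i < 0$ then $b < 0$ and we still need strictness of $B-b$ to come from the $-b$ side; if all $b_i = 0$ then by the critical-point analysis $\xi \equiv 0$ and there is nothing to prove.

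For the strict decrease of $B - b$ away from the zero solution, I would argue by contradiction: if $\frac{d}{d\tau}(B - b)(\tau_1) = 0$ at some $\tau_1$ with $\xi(\tau_1) \ne 0$, then both $\dot B(\tau_1) = 0$ and $\dot b(\tau_1) = 0$, which forces (from the sign analysis above, tracking which indices achieve the max/min and using that the prefactors are strictly positive) $2b_{i_0} = b_{i_0-1} + b_{i_0+1}$ at every maximizing index and similarly at minimizing indices. Propagating these equalities along the chain $1,\dots,k+1$ together with the boundary values $b_1 = b_{k+1} = 0$ should force $b_i \equiv 0$ for all $i$, hence (as in the critical point step) $\xi \equiv 0$, a contradiction; here one may need to upgrade the pointwise statement to a statement on an interval, or use a unique-continuation / real-analyticity argument for the ODE, or simply observe that $B = b = 0$ identically together with \eqref{minbB} forces all $b_i \equiv 0$. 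I expect \textbf{this last step — extracting strict decrease, i.e. ruling out flat spots of $B - b$ other than the trivial solution} — to be the main obstacle, because it requires carefully controlling which indices realize the extrema and showing the resulting rigidity propagates; the monotonicity itself is a fairly standard max-of-solutions Dini-derivative computation.
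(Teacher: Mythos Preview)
Your treatment of the critical points and of weak monotonicity is correct, and your Dini-derivative argument for $B$ nonincreasing is in fact cleaner than the paper's. The paper does not use Dini derivatives at all: it proves a sharper pointwise statement (Lemma~\ref{raphael}) asserting that for every index $i\in J(\tau_0)\cap\llbracket 2,k\rrbracket$ one has $b_i(t)<B(\tau_0)$ for $t$ slightly larger than $\tau_0$, and this is obtained by computing successive derivatives $b_i^{(n)}(\tau_0)$ via an induction (propositions $P_n$) that propagates a strictly negative higher derivative inward from the boundary of each maximal block of maximizing indices, using the Fa\`a di Bruno formula. That lemma gives both weak and strict monotonicity in one stroke.

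Your argument for \emph{strict} decrease, as written, has a gap. Writing ``$\frac{d}{d\tau}(B-b)(\tau_1)=0$'' and deducing ``$\dot B(\tau_1)=0$ and $\dot b(\tau_1)=0$'' is not legitimate: $B$ and $b$ need not be differentiable, and even the vanishing of the right Dini derivative of $B$ at a single time only tells you that \emph{some} maximizing index $i_0$ has $\dot b_{i_0}(\tau_1)=0$, not all of them; from that alone you cannot propagate. The fix is exactly the ``upgrade to an interval'' you allude to, and it is simpler than you fear (no real-analyticity or unique continuation needed). If $B-b$ fails to be strictly decreasing, then since $B$ and $-b$ are both nonincreasing, $B$ (say) is constant equal to some $B_0>0$ on a nontrivial interval $[\tau_0,\tau_1]$. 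Pick $\tau$ in the interior and any index $i$ with $b_i(\tau)=B_0$; since $b_i\le B\equiv B_0$ on the whole interval, $\tau$ is an interior maximum of the smooth function $b_i$, hence $\dot b_i(\tau)=0$. Your formula $\dot b_i=(\sigma_{i-1}+b_i)(b_{i-1}-2b_i+b_{i+1})$ with strictly positive prefactor then forces $b_{i-1}(\tau)=b_{i+1}(\tau)=B_0$, and you may repeat the argument at $i-1$ (still an interior maximum for the same reason). Iterating downward you reach $b_1(\tau)=B_0>0$, contradicting $b_1\equiv 0$. Thus the obstacle you flagged dissolves once you argue on an interval rather than at a point.

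In summary: your route is genuinely different and, once the strictness step is patched as above, shorter than the paper's; the paper's higher-derivative lemma is more laborious but yields a quantitative forward-in-time strict inequality $b_i(t)<B(\tau_0)$ directly, without passing through a contradiction argument.
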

\begin{proof}
Regarding the critical points: note that $\dot \xi_1 =0$ if and only if $\xi_2=\xi_1$. By a straightforward induction one sees that all $\xi_i$ are equal. As their sum is 0, the only critical point is $\xi_1 = \dots =\xi_k =0$.

\medskip

Let us now prove that $B$ is nonincreasing
along the flow; the argument for $-b$ will be similar.
Hence, let $\boldsymbol \xi(\tau) =(\xi_1(\tau),\dots,\xi_k(\tau))$ be a solution of \eqref{eq:ptl} such that
\begin{equation}\label{outside}
\boldsymbol \xi(\tau)\neq 0\mbox{ for any }\tau\mbox{ in the domain of definition}.
\end{equation}
Define 
\[
J(\tau) = \{ i \in \llbracket 1, k+1 \rrbracket\; |\; b_i(\tau) = B(\tau) \},
\]
 the set of indices $i$ for which $b_i$ is maximum at time $\tau$. The following lemma allows us to conclude:
\begin{lem} \label{raphael}For all $\tau_0\in [0, \tau_\infty)$, there exist $\e = \e(\tau_0)>0$ such that for all $i \in J(\tau_0)\cap\llbracket 2,k\rrbracket$, $b_i(t) < B(\tau_0)$ for all $t \in (\tau_0,\tau_0+\e)$.
\end{lem}
Indeed, assuming this lemma, let us show that for all $i=1,\dots,k+1$, there exists $\epsilon_i>0$ such that
\begin{equation}\label{goali}
\forall t\in(\tau_0, \tau_0+\epsilon_i),\;\;b_i(\tau)\le B(\tau_0).
\end{equation}
If $i=1,$ or $i=k+1$, then \eqref{goali} is obvious from \eqref{defbi} and \eqref{minbB}.\\
If $2\le i\le k$ and $i\in J(\tau_0)$, then \eqref{goali} is clear from Lemma \ref{raphael}.\\
If $2\le i\le k$ and $i\not\in J(\tau_0)$, then $b_i(\tau_0)<B(\tau_0)$ by definition of $J(\tau_0)$ and \eqref{goali} follows by continuity of $b_i(\tau)$.\\ 
By connectedness, it follows that $B$ is nonincreasing on the whole interval of definition of the solution.
The argument for $-b$ is quite similar.\\
In particular $B(\tau)-b(\tau)$ is nonincreasing too. Let us show that it is in fact decreasing. 
Since \eqref{outside} holds, it follows that either $B(\tau_0)>0$ or $-b(\tau_0)>0$ (otherwise $B(\tau_0)=b(\tau_0)=0$ by \eqref{minbB}, hence $b_i(\tau_0)=0$ and $\xi_i(\tau_0)=\xi_1(\tau_0)=0$ by \eqref{def:bB}, \eqref{defbi} and \eqref{zerobarycenter}, which is a contradiction by \eqref{outside}). Hence, using a similar argument to the proof of \eqref{goali}, we see that either $B$ or $-b$ is decreasing. Thus, $B-b$ is decreasing, which is the desired conclusion for Proposition \ref{lyapunov}. It remains to prove Lemma \ref{raphael} in order to finish the proof of Proposition \ref{lyapunov}.

\begin{proof}[Proof of Lemma \ref{raphael}]
Let $\llbracket \aa ,\bb \rrbracket \subset J(\tau_0)$ be a maximal interval of integers included in $J(\tau_0)$. As $J(\tau_0)$ is a union of such intervals, it is enough to prove Lemma \ref{raphael} for all $i\in \llbracket \aa ,\bb \rrbracket \cap \llbracket 2 ,k \rrbracket$. \\
Notice that for $i=2,\dots,k$, $b_i$ has the sign of $(\xi_{i-1} - \xi_i)$, and that 
\begin{equation} \label{diffb}
\dot b_i = \sigma_{i-1}  e^{ - (\xi_i - \xi_{i-1}) } (\dot \xi_{i-1} - \dot \xi_{i})
\end{equation}
 has the sign of $(\dot \xi_{i-1} - \dot \xi_i)$.
Now, using \eqref{xbb}, we write
\begin{equation} \label{diffx}
\dot \xi_{i-1} - \dot \xi_i = b_{i-1} -2b_i + b_{i+1}.
\end{equation}

\noindent\emph{Case 1: $\llbracket \aa ,\bb \rrbracket \subset \llbracket 2 ,k \rrbracket$}.

In particular, as $\aa-1 \notin J(\tau_0)$ and $\bb+1 \notin J(\tau_0)$, we get $b_{\aa-1}(\tau_0) < B(\tau_0)$ and $b_{\bb+1}(\tau_0) < B(\tau_0)$, and this shows that
\begin{equation} \label{p1}
\begin{cases}
\dot b_\aa(\tau_0) <0, \quad \dot b_\bb(\tau_0) <0, \\
\dot b_i(\tau_0) = 0, \quad \text{for all } i \text{ such that } \aa < i < \bb.
\end{cases}
\end{equation}
If $i=\aa$ or $i=\bb$, as $b_i(\tau_0)$ is maximum, we see that $\dot b_i (\tau_0)<0$, so that Lemma \ref{raphael} holds for this $i$.

\medskip

Now assume $\aa < i <\bb $, then $b_{i-1} (\tau_0)= b_i(\tau_0) = b_{i+1}(\tau_0) = B(\tau_0)$, so that $\dot \xi_{i-1}(\tau_0) - \dot \xi_i(\tau_0) =0$ and then $\dot b_i(\tau_0)=0$ from \eqref{diffb} and \eqref{diffx}. 
We will show in fact that a higher derivative of $b_i$ is negative at $\tau=\tau_0$ which will conclude the proof of Lemma \ref{raphael} for this $i$. 
More precisely, we prove the following:

\medskip

\noindent\emph{Claim.} 
Let $d(i) = \min \{ i-\aa, \bb-i \}$. If $d(i) \ge 1$, then
\begin{equation}
\dot b_i(\tau_0) = \dots = b_i^{(d(i))}(\tau_0) =0\mbox{ and } b_i^{(d(i)+1)}(\tau_0) < 0.
\end{equation}

To prove the Claim, for $n \in \llbracket 1, \lfloor (\bb-\aa)/2 \rfloor \rrbracket$ where $\lfloor z\rfloor$ stands for the integer part of $z\in\m R$, consider the proposition
\[ 
P_n: \quad b_{\aa-1+n}^{(n)}(\tau_0) <0, \quad  b_{\bb+1-n}^{(n)}(\tau_0) <0, \quad \text{and } \ \forall i \in \llbracket \aa+n, \bb-n \rrbracket, \ b_i^{(n)} (\tau_0) =0.
\]
In some sense, this proposition relies on an inductive mechanism, where a negative higher derivative propagates from $i=\bb$ in the left direction, affecting after each step the next derivative of the left neighbor. A similar phenomenon starts from $i=\aa$ and goes to the right.
We will prove proposition $P_n$ by induction on $n$.\\
Note that \eqref{p1} proves $P_1$. Let $n \ge 2$ and assume that $P_1, \dots P_{n-1}$ hold. In particular, $P_{n-1}$ gives
\[  
b_{\aa-2+n}^{(n-1)}(\tau_0) <0, \quad  b_{\bb+2-n}^{(n-1)}(\tau_0) <0, \text{ and } \forall i \in \llbracket \aa-1+n; \bb+1-n \rrbracket, \ b_i^{(n-1)} (\tau_0) =0. 
\]
Differentiating \eqref{diffx} $(n-1)$ times gives
\[ \xi_{i-1}^{(n)} - \xi_i^{(n)} = b_{i-1}^{(n-1)} -2b_i^{(n-1)} + b_{i+1}^{(n-1)}. \]
From the previous two statements, we see that
\begin{gather*}
\xi_{\aa-2+n}^{(n)}(\tau_0) - \xi_{\aa-1+n}^{(n)}(\tau_0) <0, \quad \xi_{\bb-n}^{(n)}(\tau_0) - \xi_{\bb+1-n}^{(n)}(\tau_0) <0, \\
\text{and for } i \in \llbracket \aa+n, \bb-n \rrbracket, \quad \xi_{i-1}^{(n)}(\tau_0) - \xi_{i}^{(n)}(\tau_0) =0.
\end{gather*}
Propositions $P_1, \dots P_{n-1}$ show (in the same way) that for $i \in \llbracket \aa-1+n; \bb+1-n \rrbracket$
\begin{equation} \label{diffxn}
\dot \xi_{i-1}(\tau_0) - \dot \xi_{i}(\tau_0) = \cdots = \xi_{i-1}^{(n-1)}(\tau_0) - \xi_{i}^{(n-1)}(\tau_0) =0.
\end{equation}
Now differentiate \eqref{diffb} $(n-1)$ times using the Leibniz and Faà di Bruno formulas, we see that at $\tau_0$, the only term remaining is the one with $n$ derivative on $\xi_{i-1} - \xi_i$, i.e.
\[ 
b_i^{(n)}(\tau_0) = \frac{(i-1)(k+1-i)}{2\tau_0 } e^{ - \frac{2}{p-1} (\xi_i(\tau_0) - \xi_{i-1}(\tau_0)) } (\xi_{i-1}^{(n)}(\tau_0) - \xi_{i}^{(n)}(\tau_0)). 
\]
Hence, we then deduce that
\[ 
b_{\aa-1+n}^{(n)}(\tau_0) < 0, \quad b_{\bb+1-n}^{(n)}(\tau_0) <0, \quad \text{and for } i \in \llbracket \aa+n, \bb-n \rrbracket, \ b_i^{(n)}(\tau_0) =0. 
\]
This is $P_n$, which concludes the induction. Fixing $i$ and $d \in \llbracket 1, \dots, d(i) \rrbracket$, we see that $P_d$ gives $b_i^{(d)}(\tau_0) =0$. $P_{d(i)+1}$ gives $b_i^{(d(i)+1)}(\tau_0) <0$. Hence the Claim is proved.

\medskip

From the Claim (and \eqref{p1} in the case $d(i) =0$) and Taylor's expansion, we see that $b_i(t) - b_i(\tau_0) \sim b_i^{(d(i)+1)}(\tau_0) (t-\tau_0)^{d(i)+1}$. In particular, as $b_i^{(d(i)+1)}(\tau_0) <0$, for some small enough $\e>0$, we see that for $t \in (\tau_0,\tau_0+\e)$, $b_i(t) < b_i(\tau_0) = B(\tau_0)$.
This concludes the proof of Lemma \ref{raphael} in the case where $\llbracket \aa ,\bb \rrbracket \subset \llbracket 2 ,k \rrbracket$. 

\medskip

\noindent\emph{Case 2: $\aa=1$ or $\bb=k+1$}.

 We only treat the case where $\aa=1$, the other case being similar. Moreover, we only sketch the proof, since it uses the same techniques as Case 1 above.\\
 Note first that since $1\in J(\tau_0)$ and $b_1(\tau_0)=0$ by \eqref{defbi}, it follows that $B(\tau_0)=0$.\\
Then, we claim that 
\begin{equation}\label{bbbas}
\bb\le k-1.
\end{equation}
 Indeed, if $\bb=k$, then recalling that $b_{k+1}(\tau_0)=0$ by \eqref{defbi}, we see that $J(\tau_0)=\llbracket 1,k+1\rrbracket$ and $\llbracket \aa, \bb\rrbracket=\llbracket 1,k\rrbracket$ is not maximal in $J(\tau_0)$, which is a contradiction.\\
If $\bb=k+1$, then for all $i=2,\dots,k$, $b_i(\tau_0)=0$ and $\xi_i(\tau_0)=\xi_1(\tau_0)=0$ by \eqref{def:bB}, \eqref{defbi} and \eqref{zerobarycenter}, which is a contradiction by \eqref{outside}. Thus, \eqref{bbbas} holds.

\medskip

If $\bb=1$, then 
$ \llbracket \aa ,\bb \rrbracket \cap \llbracket 2 ,k \rrbracket=\emptyset$ and we have nothing to prove.

If $\bb\ge 2$ (which means that $k\ge 3$ by \eqref{bbbas}), since $\bb+1\not \in J(\tau_0)$, arguing as for \eqref{p1}, we see that 
\begin{equation}\label{bp1}
\dot b_\bb(\tau_0) <0,
\end{equation}
and the conclusion of Lemma follows for $i=\bb$. More generally, 
as in Case 1, the following claim allows us to conclude:

\medskip

\noindent\emph{Claim.}
For all $i=2,\dots,\bb$, we have 
\begin{equation}
\dot b_i(\tau_0) = \dots = b_i^{(d(i))}(\tau_0) =0\mbox{ and } b_i^{(d(i)+1)}(\tau_0) < 0\mbox{ where }d(i) =\bb-i.
\end{equation}

The proof of this claim uses the same iterative procedure as in Case 1, based on an induction for the following property for all $n=1,\dots,\bb+2$:
\[ 
\overline P_n: \quad  b_{\bb+1-n}^{(n)}(\tau_0) <0, \quad \text{and } \ \forall i \in \llbracket 2, \bb-n \rrbracket, \ b_i^{(n)} (\tau_0) =0.
\]
Note that $\overline P_1$ follows by \eqref{bp1}.\\
Note also that unlike the property $P_n$ is Case 1 where the negative higher derivative is propagating both from the right and from the left, here, it propagates only from the right. The non propagation from the left is replaced by the information that that $b_1(\tau)$ is identically zero, hence $b_1^{(j)}(\tau_0)=0$ for all $j\in \m N$.\\
For more details, see Case 1. This concludes the proof of Lemma \ref{raphael}.
\end{proof}

As noticed above, Proposition \ref{lyapunov} then proceeds from Lemma \ref{raphael}
\end{proof}

\emph{Step 3: A compact stable by the flow of \eqref{eq:ptl} under condition \eqref{zerobarycenter}}

From the definition \eqref{defbi} of $b_i(\tau)$ and the equations \eqref{diffb} and \eqref{diffx}, we write for all $i=2,\dots,k$ and $\tau \in [0, \tau_\infty)$, 
\begin{equation} \label{edo:bi}
\dot b_i = (b_i + \sigma_{i-1}) (b_{i-1} - 2b_i + b_{i+1}) \quad i = 2, \dots, k,
\end{equation}
where we set by convention 
\begin{equation}\label{b1bk+1}
b_1(\tau)\equiv b_{k+1}(\tau)\equiv 0.
\end{equation} 
Note that thanks to condition \eqref{zerobarycenter} and under the condition
\begin{equation}\label{above-sigma-i}
b_i(\tau)>-\sigma_{i-1},
\end{equation}
 this system is equivalent to system \eqref{eq:ptl}. 
We claim the following:
\begin{prop}[Compacts stable by the flow of system \eqref{edo:bi}]\label{propcpt} For all $\eta\in(0, \frac 15]$ and $A\ge 0$, the compact $\prod_{i=2}^k [-\sigma_{i-1}+\eta, A]$ is stable by the flow of system \eqref{edo:bi}. In particular, any solution of system \eqref{edo:bi} whose initial data is in that compact is global. 
\end{prop}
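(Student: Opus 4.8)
The plan is to show that on the boundary of the box $K = \prod_{i=2}^k [-\sigma_{i-1}+\eta, A]$, the vector field defined by \eqref{edo:bi} points inward (or is tangent), which by a standard flow-invariance argument (Nagumo's criterion) yields stability of $K$. The boundary of $K$ is a union of faces of the form $\{b_j = -\sigma_{j-1}+\eta\}$ (lower faces) and $\{b_j = A\}$ (upper faces), intersected with $K$; on each such face I must check the sign of $\dot b_j$ given by \eqref{edo:bi}, namely $\dot b_j = (b_j + \sigma_{j-1})(b_{j-1} - 2b_j + b_{j+1})$, using the convention \eqref{b1bk+1} that $b_1 = b_{k+1} = 0$.

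First I would treat an upper face $\{b_j = A\}$. Here $b_j + \sigma_{j-1} = A + \sigma_{j-1} \ge 0$, so I need $b_{j-1} - 2b_j + b_{j+1} \le 0$ when $b_j = A$ is at the maximal value, i.e. $b_{j-1} \le A$ and $b_{j+1}\le A$ (this includes the cases $b_{j-1}=0$ or $b_{j+1}=0$ coming from the convention, provided $A \ge 0$); indeed $b_{j-1} - 2b_j + b_{j+1} \le A - 2A + A = 0$, so $\dot b_j \le 0$ and the flow does not leave through the upper faces. Next, the lower face $\{b_j = -\sigma_{j-1}+\eta\}$: now $b_j + \sigma_{j-1} = \eta > 0$, so I need $b_{j-1} - 2b_j + b_{j+1} \ge 0$ in order to get $\dot b_j \ge 0$. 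Since $b_j = -\sigma_{j-1}+\eta$ is the minimal value of $b_j$ in $K$, and since $b_{j-1}$, $b_{j+1}$ are each either $0$ or bounded below by $-\sigma_{j-2}+\eta$, resp. $-\sigma_j + \eta$, I get $b_{j-1} - 2b_j + b_{j+1} \ge (-\sigma_{j-2}+\eta) + 2(\sigma_{j-1}-\eta) + (-\sigma_j+\eta) = 2\sigma_{j-1} - \sigma_{j-2} - \sigma_j$. From the explicit formula \eqref{defsi}, $\sigma_i = i(k-i)/2$, one computes $2\sigma_{j-1} - \sigma_{j-2} - \sigma_j = -(\sigma_{j-2} - 2\sigma_{j-1} + \sigma_j)$, the negative of the discrete second difference of the concave quadratic $i\mapsto i(k-i)/2$, which equals $+1$; hence the bracket is $\ge 1 - 2\eta \ge 1 - 2/5 > 0$ (using $\eta \le 1/5$; in fact any bound ensuring this works, and the convention-induced zeros only help since $0 \ge -\sigma_{j-2}+\eta$). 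Thus $\dot b_j > 0$ on the lower faces and the flow does not exit there either. The remaining point is that $K$ lies in the region \eqref{above-sigma-i} where $b_i > -\sigma_{i-1}$, so \eqref{edo:bi} is genuinely equivalent to \eqref{eq:ptl} on $K$ and the right-hand side is smooth; local existence plus the invariance just proved gives global existence for any initial datum in $K$, completing the proof.

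The main obstacle — really the only substantive computation — is the discrete-convexity estimate $\sigma_{j-2} - 2\sigma_{j-1} + \sigma_j = -1$ for all relevant $j$, together with being careful about the edge indices $j=2$ and $j=k$ where the convention $b_1 = b_{k+1}=0$ replaces a neighbor: there one must check separately that $0$ is an acceptable substitute (it is, since $0 \ge -\sigma_{j-2}+\eta$ for the lower-face estimate and $0 \le A$ for the upper-face estimate). A secondary, purely technical point is to invoke the correct version of the invariance criterion for a solid box with corners; this is handled by the standard observation that it suffices to check the inward-pointing (Nagumo subtangential) condition on each closed face, and at corners the intersection of the face conditions suffices since the faces meet transversally. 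No delicate analysis is needed beyond these bookkeeping steps.
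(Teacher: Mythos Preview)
Your argument is correct and follows essentially the same route as the paper: check the sign of $\dot b_j$ on each face of the box and conclude via flow invariance. The lower-face computation matches the paper's exactly, including the constraint $\eta\le 1/5$ coming from the bound $1-2\eta>0$ (which is sharp when $k=2$, since then both neighbours $b_1=b_3=0$). The one small difference is that for the upper faces you argue directly that $b_{j-1}-2b_j+b_{j+1}\le 0$ when $b_j=A$, whereas the paper instead invokes the previously established fact (Proposition~\ref{lyapunov}) that $B(\tau)=\max_i b_i(\tau)$ is nonincreasing; your direct argument is more self-contained.

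One slip to fix: your parenthetical claim that ``the convention-induced zeros only help since $0 \ge -\sigma_{j-2}+\eta$'' is false at $j=2$, because $\sigma_0=0$ and hence $-\sigma_0+\eta=\eta>0$. This does not damage the proof, since redoing the edge cases explicitly gives: for $j=2$ with $b_1=0$, the bracket is $\ge 2\sigma_1-\sigma_2-\eta = 1-\eta$; for $j=k$ with $b_{k+1}=0$, symmetrically $\ge 1-\eta$; and for $k=2$ (both neighbours zero), $\ge 2\sigma_1-2\eta=1-2\eta$. So your stated uniform lower bound $1-2\eta$ is correct, only the one-line justification needs adjusting.
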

\begin{nb}
From the equivalence between system \eqref{edo:bi} and system \eqref{eq:ptl} under conditions \eqref{zerobarycenter} and \eqref{above-sigma-i}, any solution to the Cauchy problem for system \eqref{eq:ptl} under the condition \eqref{zerobarycenter} exists globally in time. The same holds for any solution to system \eqref{eq:tl} too. 
\end{nb}
\begin{proof} Let $\eta\in(0, \frac 15]$ and $A\ge 0$, and consider initial data for \eqref{edo:bi} such that
\[
\forall i=2,\dots,k,\;\;-\sigma_{i-1}+\eta \le b_i(0)\le A.
\]
In particular, we have $B(0)\le A$ where $B(\tau)$ is defined in \eqref{def:bB}. Since $B(\tau)$ is nonincreasing by Proposition \ref{lyapunov}, it follows that
\[
\forall i=2,\dots,k,\;\;b_i(\tau)\le B(\tau) \le B(0)\le A.
\]
Now we argue by contradiction and assume that for some $\tau>0$, there exists $i=2,\dots,k$ such that $b_i(\tau)<-\sigma_i+\eta$. Choosing the lowest $\tau$, we end up by continuity with some $\tau^*\ge 0$ such that 
\begin{eqnarray}
(i)&&\forall \tau\in[0, \tau^*],\;\;\forall i=1,\dots,k,\;\; b_i(\tau) \ge -\sigma_i+\eta,\label{above}\\
(ii)&&b_j(\tau^*)=-\sigma_j+\eta\mbox{ and }\dot b_j(\tau^*)\le 0\mbox{ for some }j=2,\dots,k,\label{abovej}
\end{eqnarray} 
on the one hand. On the other hand, $(\sigma_i)_i$ is a (strictly) convex family of semi-integers, so that in particular, 
\[
\forall i =1,\dots,k,\;\;-\sigma_{i-1} + 2\sigma_i - \sigma_{i+1} \ge 1/2
\]
and remarking that 
\[
b_{j-1}(\tau^*)\ge - \sigma_{j-1}\mbox{ and }b_{j+1}(\tau^*)\ge - \sigma_{j+1}
\]
(use \eqref{above} if the indices are between $2$ and $k$, and use \eqref{b1bk+1} and the definition \eqref{defsi} of $\sigma_i$ if the indices are either $1$ or $k+1$),  we see from \eqref{edo:bi} and \eqref{abovej} that 
\begin{align*}
\dot b_j(\tau^*) & =  (b_j(\tau^*) + \sigma_j) (b_{j-1}(\tau^*) - 2b_j(\tau^*) + b_{j+1}(\tau^*)) \\
& \ge  \eta ( - \sigma_{j-1} + 2 \sigma_j- 2 \eta - \sigma_{j+1}) \ge \e (1/2 - 2\eta) > 0
\end{align*}
since we choose $\eta \in(0, \frac 15]$. This is a contradiction with \eqref{abovej}. 
\end{proof}

\emph{Step 4: Asymptotic behavior of solutions to \eqref{eq:ptl} under condition \eqref{zerobarycenter}}

We endow $\m R^k$ with the $\ell^2$ norm. We show in the following that any solution to system \eqref{eq:tl} approaches the particular family of solutions given in Step 1:
\begin{prop}[Asymptotic behavior for system \eqref{eq:ptl} under condition \eqref{zerobarycenter}] \label{prop:tl_conv}$ $\\
(i) Let $(\xi_i(\tau))_{i=1,\dots,k}$ be a solution to system \eqref{eq:ptl} under condition \eqref{zerobarycenter}, with initial data at $\tau=0$ satisfying
\begin{equation}\label{compact0}
\forall i=1,\dots,k,\;\;|\xi_i(0)|\le C_0
\end{equation}
for some $C_0>0$. Then, the solution is defined for all $\tau\ge 0$ and there exists $C_1(C_0)>0$ such that
\[ 
\forall \tau \ge 0,\;\;\sup_{i}|\xi_i(\tau)|\le C_1 e^{-\tau}.
\]
(ii)  Let $(\zeta_i(s))_{i=1,\dots,k}$ be a solution to system \eqref{eq:tl} with initial data given at $s=1$. Then, the solution is defined for all $s\ge 1$ and 
\[
\forall s\ge 1,\;\; \sup_{i} | \zeta_i(s) - (\bar \zeta_i(s)+\zeta_0) | \le C s^{-1}\mbox{ with } \zeta_0 = \frac{1}{k} \sum_{k=1}^n \zeta_i(1),
\]
where the $(\bar \zeta_i(s))$ is the explicit solution of system \eqref{eq:tl} introduced in Step 1 above. 
\end{prop}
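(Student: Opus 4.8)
The plan is to establish part (i) for the autonomous system \eqref{eq:ptl} and then deduce part (ii) by reducing to zero barycenter and rescaling time. For (i), given $|\xi_i(0)|\le C_0$, I would first pass to the $b$-variables \eqref{defbi}: since each $\xi_i(0)-\xi_{i-1}(0)$ is bounded by $2C_0$, the quantities $b_i(0)=\sigma_{i-1}(e^{-(\xi_i(0)-\xi_{i-1}(0))}-1)$ lie in a compact box $\prod_{i=2}^k[-\sigma_{i-1}+\eta,A]$ for suitable $\eta=\eta(C_0)\in(0,1/5]$ and $A=A(C_0)$. Proposition \ref{propcpt} then produces a global $b$-trajectory confined to that box; inverting \eqref{defbi} and fixing the barycenter by \eqref{zerobarycenter} recovers $\xi(\tau)$ for all $\tau\ge0$ and shows that it remains in a fixed compact subset of the invariant hyperplane $\{\sum_i\xi_i=0\}$.

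The second step is to prove $\xi(\tau)\to0$. Proposition \ref{lyapunov} provides $L(\tau)=B(\tau)-b(\tau)\ge0$, which is nonincreasing along the flow and strictly decreasing unless the trajectory is identically zero. Since the trajectory is precompact, its $\omega$-limit set $\Omega$ is nonempty, compact and invariant, and $L$ equals the constant $\ell=\lim_{\tau\to\infty}L(\tau)$ on $\Omega$; applying the strict-decrease statement to the complete trajectory through any point of $\Omega$ forces that trajectory to vanish identically, so $\ell=0$, and then \eqref{minbB} gives $B=b=0$, whence all $b_i=0$ and, by \eqref{zerobarycenter}, $\xi=0$. Thus $\Omega=\{0\}$ and $\xi(\tau)\to0$ as $\tau\to\infty$, uniformly over the compact set of admissible data by continuous dependence.

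To upgrade this to the rate $e^{-\tau}$, I would linearize \eqref{eq:ptl} at $0$: the Jacobian $M$ is the symmetric tridiagonal matrix with off-diagonal entries $\sigma_i$ and diagonal entries $-(\sigma_{i-1}+\sigma_i)$ (with $\sigma_0=\sigma_k=0$ from \eqref{defsi}), that is, minus the weighted Laplacian of the path with edge weights $\sigma_1,\dots,\sigma_{k-1}$; it preserves the hyperplane $\{\sum_i\xi_i=0\}$ and is negative definite there. An explicit diagonalization shows its eigenvalues on that hyperplane are $-j(j+1)/2$ for $j=1,\dots,k-1$, so the spectral gap equals $1$ and $\|e^{\tau M}\|\le e^{-\tau}$ on $\{\sum_i\xi_i=0\}$ for $\tau\ge0$. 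Writing \eqref{eq:ptl} as $\dot\xi=M\xi+N(\xi)$ with $|N(\xi)|\le C|\xi|^2$ on the hyperplane, choosing $\tau_1=\tau_1(C_0)$ with $|\xi(\tau_1)|$ small (possible by the previous step), and running a Duhamel--Gronwall bootstrap on $[\tau_1,\infty)$ gives $|\xi(\tau)|\le 2|\xi(\tau_1)|e^{-(\tau-\tau_1)}$; combined with the trivial bound on $[0,\tau_1]$ this yields $\sup_i|\xi_i(\tau)|\le C_1(C_0)e^{-\tau}$, which is (i). The only genuinely non-routine ingredient is this spectral fact: because the target decay is exactly $e^{-\tau}$ and not merely some exponential, one needs the spectral gap of $M$ on $\{\sum_i\xi_i=0\}$ to be at least $1$, and pinning it down (the eigenvalues $-j(j+1)/2$ are of the $\mathfrak{sl}_2$ type attached to the finite non-periodic Toda lattice underlying \eqref{eq:tl}) is the computational heart of the argument; once it is available, the convergence $\xi\to0$ and its upgrade to the sharp rate are standard.

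For (ii), note that the right-hand side of \eqref{eq:tl} telescopes, so $\sum_i\dot\zeta_i=0$ and the barycenter $\zeta_0=\frac1k\sum_i\zeta_i(1)$ is conserved; hence $\hat\zeta_i:=\zeta_i-\zeta_0$ solves \eqref{eq:tl} with zero barycenter. Setting $\xi_i(\tau)=\frac2{p-1}(\hat\zeta_i(s)-\bar\zeta_i(s))$ with $\tau=\log s$ — the very substitution \eqref{defxi0} that produced \eqref{eq:ptl}, where \eqref{defgi} and \eqref{defalphai} generate precisely the coefficients $\sigma_i=\frac{i(k-i)}2$ — gives a solution of \eqref{eq:ptl} under \eqref{zerobarycenter} whose data at $\tau=0$ is bounded in terms of the given $\zeta_i(1)$. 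Applying part (i) then gives $\frac2{p-1}|\zeta_i(s)-(\bar\zeta_i(s)+\zeta_0)|=|\xi_i(\log s)|\le C_1 s^{-1}$, which is (ii); global existence for \eqref{eq:tl} comes out along the way.
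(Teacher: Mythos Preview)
Your proposal is correct and follows essentially the same route as the paper: global existence via Proposition~\ref{propcpt} in the $b$-variables, convergence $\xi\to 0$ through the Lyapunov functional $B-b$ of Proposition~\ref{lyapunov} (the paper invokes a Lyapunov/LaSalle theorem stated in an appendix, which is your $\omega$-limit argument), then linearization at $0$ and the spectral gap of $M$ on $\{\sum_i\xi_i=0\}$ to upgrade to the rate $e^{-\tau}$, with (ii) reduced to (i) by the substitution \eqref{defxi0}. The only cosmetic differences are that the paper gets the rate via the energy inequality $\frac{d}{d\tau}\|\boldsymbol\xi\|^2\le -2\|\boldsymbol\xi\|^2+C\|\boldsymbol\xi\|^3$ rather than Duhamel, and computes the eigenvalues $-i(i-1)/2$ of $M$ (Lemma~\ref{eigenm}) by an explicit matrix conjugation in an appendix rather than appealing to the Toda structure you mention.
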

\begin{proof}
Let us first derive (ii) from (i), then we will prove (i).\\
(ii) Consider $(\zeta_i(s))_{i=1,\dots,k}$ a solution to system \eqref{eq:tl} with initial data given at $s=1$. Since the center of mass is conserved in time, introducing
\[
\xi_i(\tau) =  \frac 2{p-1}\left[\zeta_i(s) - \left(\bar \zeta_i(s)+\zeta_0\right)\right]\mbox{ where }\tau = \log s,
\] 
we see from Step 1 above that 
\begin{equation}\label{cond}
\forall \tau\ge 0,\;\;\sum_i \xi_i(\tau) =\sum_i \xi_i(0) =0
\end{equation}
 and $(\xi_i(\tau))_i$ satisfies \eqref{eq:ptl}. Thus, (ii) follows from (i).\\
(i) From the remark following Proposition \ref{propcpt}, we know that
$(\xi_i(s))_{i=1,\dots,k}$
is globally defined in time. 
\\
Introducing $b_i(\tau)$ as in \eqref{defbi}, we recall from Step 3 above the equivalence between system \eqref{edo:bi} and system \eqref{eq:ptl} under conditions \eqref{zerobarycenter} and \eqref{above-sigma-i}. In particular, using Proposition \ref{lyapunov}, we see that $(b_2(\tau),\dots, b_k(\tau))\equiv (0,\dots,0)$ is the only critical point of system \eqref{edo:bi} and that the functional $B-b$ is a Lyapunov functional, strictly decreasing except at the critical point (see Step 3 above). Since Proposition \ref{propcpt} provides us with a compact 
\[
K(C_0)=\prod_{i=2}^k [-\sigma_{i-1}+\eta, A]\text{ for some }\eta=\eta(C_0)>0\text{ and }A=A(C_0) 
\]
stable under the flow of system \eqref{edo:bi}, we see that Lyapunov's theorem applies to this system and yields the fact that $b_i(\tau) \to 0$ as $\tau \to 0$ (see Appendix \ref{applyap} below for the statement and the proof of the version of Lyapunov's theorem we use). From the relation \eqref{defbi} between $\xi_i$ and $b_i$ together withe zero barycenter condition \eqref{zerobarycenter}, we see that  
\begin{eqnarray*}
(i)&& \forall \tau \ge 0,\;\;|\xi_i(\tau)|\le C_2=C_2(C_0),\\
(ii)&& \xi_i(\tau) \to 0\mbox{ as }\tau \to +\infty.
\end{eqnarray*}
%
Since initial data are chosen in a compact (see \eqref{compact0} above), using the continuity with respect to initial data, for solutions of ODEs on a given time interval, we see that this convergence is uniform, in the sense that
\begin{equation}\label{uniform-xi}
\forall \epsilon>0,\ \exists \tau^*(C_0,\epsilon)>0 \quad \text{such that} \quad \forall \tau \ge \tau^*,\ \|\boldsymbol \xi(\tau)\|\le \epsilon.
\end{equation}
Linearizing system \eqref{eq:ptl} near the zero solution, we write
\begin{equation}\label{eqlin} 
\forall \tau \ge 0,\quad \left\| \dot {\boldsymbol \xi}(\tau) - M \boldsymbol \xi(\tau)\right\| \le C_3 \|\boldsymbol\xi(\tau) \|^2 \quad \text{for some} \quad C_3=C_3(C_0)>0,
\end{equation}
where 
the $k\times k$ matrix $M = (m_{i,j})_{(i,j) \in \llbracket 1,k \rrbracket}$, with 
\begin{equation}\label{defM}
m_{i,i-1}=\sigma_{i-1},\quad m_{i,i} = - (\sigma_{i-1} + \sigma_i),\quad m_{i,i+1}=\sigma_i, m_{i,j}=0 \text{ if }|i-j|\ge 2
\end{equation}
and $\sigma_i$ is defined in \eqref{defsi}. 
We claim the following:
\begin{lem}[Eigenvalues of $M$]\label{eigenm} 
The matrix $M$ is diagonalizable, with real eigenvalues 
\begin{equation}\label{defmi}
-m_i \equiv - \frac{i(i-1)}2,\mbox{ for } i=1,...,k,
\end{equation}
and the associated eigenvectors $\boldsymbol{e}_i$ normalized for the $\ell^\infty$ norm.
 If $i=1$, then $e_1 = {}^t(1,\dots,1)$.
\end{lem}
\begin{nb}
\end{nb}
\begin{proof}
Since $M$ is symmetric, it is diagonalizable with real eigenvalues. Furthermore, we can compute
\begin{equation*}
 (M \boldsymbol \xi, \boldsymbol \xi) = - \sum_{i=1}^{k-1} \sigma_i (\xi_{i+1} - \xi_{i})^2. 
\end{equation*}
In particular, $M(x,x) =0$ if and only if $(\boldsymbol \xi, {}^t(1,\dots, 1))$ is linearly dependent, so that $M$ has $0$ as an eigenvalue with eigenvector ${}^t(1,\dots, 1)$, and the other eigenvalues are negative.\\
 The proof of the exact value \eqref{defmi} of the eigenvalues relies on clever transformation of the matrix $M$, which are somehow long. We leave them to Appendix \ref{appmi}. See Appendix \ref{appmi} for the end of the proof of Lemma \ref{eigenm}.
\end{proof}

With this lemma, we carry on the proof of Proposition \ref{prop:tl_conv}.
Now, as $\sum_i \xi_i(\tau) =0$, we have from Lemma \ref{eigenm} that
\[ 
(M \boldsymbol \xi, \boldsymbol \xi) \le  - \| \boldsymbol \xi \|^2, 
\]
so that
\[ \frac{d}{d\tau} \| \boldsymbol \xi(\tau) \|^2 \le 
- 2 \| \boldsymbol \xi(\tau) \|^2 + C_4 \| \boldsymbol \xi(\tau) \|^3
\]
for some $C_4=C_4(C_0)>0$.
Since $\boldsymbol \xi(\tau)\to 0$ as $\tau \to \infty$, uniformly with respect to initial data satisfying \eqref{compact0} (see \eqref{uniform-xi} above), we see that  $\| \boldsymbol \xi(\tau) \| \le C_1e^{- \tau}=C_1s^{-1}$
for some $C_1=C_1(C_0)$. This concludes the proof of Proposition \ref{prop:tl_conv}.
\end{proof}

\bigskip

{\bf Part 2: Proof for the perturbed ODE}

We now turn to the equation \eqref{eqz} satisfied by $(\zeta_i(s))_i$, which is a perturbation of the autonomous system \eqref{eq:tl} studied in Part 1. We will prove in fact that when $s\to \infty$, $(\zeta_i(s))_i$ approaches one of the particular solutions of the autonomous system \eqref{eq:tl} introduced in Step 1 of Part 1.
More precisely, we will  prove a more accurate version of \eqref{goal0}, by showing the existence of $\zeta_0 \in \m R$ such that 
\begin{equation}\label{hadaf}
\forall i \in \llbracket 1,k \rrbracket,\; \zeta_i(s) = \bar \zeta_i(s) + \zeta_0 + O(s^{-\eta})\mbox{ as }s\to \infty,
\end{equation}
where $(\bar \zeta_i(s))_i$ is introduced in \eqref{solpart}.

Recall that we already have from \eqref{equid00} a less accurate estimate, namely that
\begin{equation*}
\forall i \in \llbracket 1,k \rrbracket,\; \zeta_i(s) = \bar \zeta_i(s) + O(1)\mbox{ as }s\to \infty.
\end{equation*}
We write from system \eqref{eqz} that 
\begin{equation}\label{limitz}
\sum_{i=1}^k \dot \zeta_i(s) = O\left(\frac 1{s^{1+\eta}}\right),\mbox{ hence } 
\frac 1k \sum_{i=1}^k  \zeta_i(s) =l+ O\left(\frac 1{s^\eta}\right)\mbox{ as }s\to \infty
\end{equation}
for some $l\in\m R$. Introducing 
\begin{equation}\label{defxi}
\xi_i(\tau) =\frac 2{p-1}\left[ \zeta_i(s) -  (\bar \zeta_i(s) + \frac 1k \sum_{i=1}^k  \zeta_i(s) )\right]
\quad \text{with} \quad \tau = \log s,
\end{equation}
we see from \eqref{limitz} and the definition \eqref{solpart} of $\bar \zeta_i(s)$ that
$\boldsymbol \xi = {}^t(\xi_1, \dots, \xi_k)$ satisfies
\begin{equation}\label{eqxi}
\forall \tau \ge \tau_0,\;\;
\left\|\dot{\boldsymbol \xi}(\tau)- \tilde f(\boldsymbol \xi(\tau))\right\| \le C_0e^{-\eta\tau},\;\;
\frac 1k\sum_i \xi_i(\tau)=0,\;\;
\left\|\boldsymbol \xi(\tau)\right\|\le C_0,
\end{equation}
for some positive $C_0$ and $\tau_0$,
where $\tilde f$ is the autonomous nonlinearity in the right-hand side of system \eqref{eq:ptl}.\\
In particular, as we will show below, $(\xi_i(\tau))_{i=1,\dots,k}$ will be close to some solution of system \eqref{eq:ptl} for $\tau$ large enough. Since solutions to \eqref{eq:ptl} converge uniformly to $0$ by Proposition \ref{prop:tl_conv}, $(\xi_i(\tau))$ will be as close to $0$ as we wish, provided that $\tau$ is large enough. More precisely, we claim the following:

\medskip
\emph{Claim.} For any $\e>0$, there exists $\hat \tau = \hat \tau(\epsilon)>0$ such that 
\begin{equation}\label{th}
\forall i=1,\dots,k,\;\;|\xi_i(\hat \tau)|\le \e.
\end{equation}
\noindent Let us use first this claim to finish the proof, then we will prove it. 

From the analysis carried out for the autonomous system \eqref{eq:ptl} in the proof of Proposition \ref{prop:tl_conv}, we linearize system \eqref{eqxi} then use the spectral properties of the matrix $M$ \eqref{defM} to write
\[
\forall \tau \ge \hat \tau,\;\;\frac{d}{d\tau} \| \boldsymbol \xi(\tau) \|^2 \le - 2  \| \boldsymbol \xi(\tau) \|^2 + C \| \boldsymbol \xi(\tau) \|^3+ Ce^{-\eta\tau}.
\]
Taking $\e$ small enough and starting from the estimate \eqref{th} at $\tau=\hat \tau$, we get by a classical integration
\[
\forall \tau\ge \hat \tau,\;\;\| \boldsymbol \xi(\tau) \| \le Ce^{-\eta \tau}
\]
(we recall here that already in \cite{MZajm11}, the constant $\eta=\eta(p)>0$ was chosen small enough). Using the definition \eqref{defxi} of $\xi_i(s)$ together with \eqref{limitz}, we see that \eqref{hadaf} holds and so does the conclusion of Theorem \ref{propref} too.
 It remains then to prove the Claim in order to conclude the proof of Theorem \ref{propref}.\\ 
For any $\bar \tau\ge \tau_0$, let us introduce $(\bar \xi(\tau))_{\bar \tau, i=1,\dots,k}$ the solution of the unperturbed system \eqref{eq:ptl} with initial data 
\begin{equation}\label{initial}
\bar \xi_{\bar \tau,i}(0)= \xi_i(\bar \tau).
\end{equation}
Using the continuity of solutions to ODEs with respect to the coefficients of the equations, we write from \eqref{initial} and \eqref{eqxi} for any $L>0$, 
\begin{equation}\label{proche}
\sup_{i=1,\dots,k; \tau\in[\bar \tau, \bar \tau+L]}|\xi_i(\tau)- \bar \xi_{\bar \tau,i}(\tau-\bar \tau)|\le C(L) e^{-\eta\bar \tau}.
\end{equation}
Since we have from \eqref{eqxi},
\[
\forall i=1,\dots,k,\;\;|\bar \xi_{\bar \tau,i}(0)|\le C_0,\;\;
\sum_{j=1}^k \bar \xi_{\bar \tau,j}(0)=0,\mbox{ hence }\sum_{j=1}^k \xi_{\bar \tau, j}(\tau)=0\mbox{ for all }\tau \ge 0,
\]
given $\epsilon>0$, we see from Proposition \ref{prop:tl_conv} (i) that for some $\tau^*(C_0,\e)>0$, we have 
\[
\forall i=1,\dots,k,\;\;|\bar \xi_{\bar\tau,i}(\tau^*)|\le \frac \e 2.
\]
Using \eqref{proche} with $L=\tau^*(C_0,\epsilon)$,
we see that 
\[
\forall i=1,\dots,k,\;\;|\xi_i(\bar \tau+\tau^*)|\le |\bar \xi_{\bar \tau,i}(\tau^*)|+C(\tau^*)e^{-\eta\bar \tau}\le \frac \e 2 + C(\tau^*)e^{-\eta\bar \tau}\le \e
\]
provided that we take $\bar \tau=\hat \tau(C_0, \epsilon)$ large enough. Taking $\hat \tau = \bar \tau +\tau^*$, we see that the Claim is proved, and so is \eqref{hadaf}, \eqref{goal0} and Theorem \ref{propref} too, thanks to the reduction we wrote after giving \eqref{goal0}.
\end{proof}

\subsection{Refined geometrical estimates for the blow-up set}

This section is devoted to the proof of Corollary \ref{corref}, which consists in a refinement of estimate \eqref{chapeau0} itself coming from \cite{MZisol10}.

\begin{proof}[Proof of Corollary \ref{corref}] From translation invariance of equation \eqref{eq:nlw_u}, we may assume that $x_0=0$ and $T(x_0)=0$.
Up to replacing $u$ by $-u$, we know from Theorem \ref{propref} that 
\begin{equation}\label{profilew0}
\left\|\vc{w_0(s)}{\ps w_0(s)} - \vc{\ds\sum_{i=1}^{k} (-1)^i\kappa(d_i(s))}0\right\|_{\H} \to 0\mbox{ as }s\to \infty,
\end{equation}
where $k=k(0)\ge 2$,
\begin{equation}\label{refequidw0}
d_i(s) = -\tanh \zeta_i(s),\;\;\zeta_i(s) = \bar \zeta_i(s) + \zeta_0
\end{equation}
for some $\zeta_0\in \m R$, and $(\bar \zeta_i(s))_i$ introduced above in \eqref{solpart} is the solution of system \eqref{eq:tl} with zero center of mass.
From symmetry invariance, we may treat the case $x<0$ first, then, at the end of the proof, we will give indications on how to recover the case $x>0$.\\
{\bf Case $x<0$}: All that we need to do is to review the proof of estimate \eqref{chapeau0} in \cite{MZisol10} and mechanically improve its estimates thanks to the new refined blow-up behavior we have just proved with Theorem \ref{propref}.\\
In \cite{MZisol10}, we prove the following estimate for $w_x$, where $x<0$ with $|x|$ small:
\begin{lem}\label{lemtrapped}For all $\epsilon>0$, there exists $\delta>0$ and $L>0$ such that for all $x\in(-\delta,0)$ and $L_k \ge L$, we have 
\[
\left\|\vc{w_x(s_k)}{\ps w_x(s_k)}+\vc{\kappa\left({\bar d}^*_1(s_k)\right)}{0}\right\|_{\H}+|\bar\lambda_1-1| \le \epsilon,
\]
where
\begin{equation}\label{defbd1}
{\bar d}^*_1(s_k)= \frac{{\bar d}_1(s_k)}{1+{\bar \nu}_1(s_k)},\;\;{\bar d}_1(s_k)=d_1(S_k),\;\; 
{\bar \nu}_i(s_k)=[b-(1-{\bar d}_i(s_k))]x e^{s_k},
\end{equation}
\begin{equation}\label{defsk}
s_k = |\log |x||+L_k,\;\;S_k=-\log[|x|(1-b)+e^{-s_\k}]
\end{equation}
and
\begin{equation}\label{deflambda}
\lambda_1=\frac{(1-{\bar d}_1^2)^{\frac 1{p-1}}}{[(1+\bar \nu_1)^2-{\bar d}_1^2]^{\frac 1{p-1}}}.
\end{equation}
\end{lem}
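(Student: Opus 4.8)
[Proof of Lemma \ref{lemtrapped} (the argument is that of \cite{MZisol10}; here is the plan)]
The plan is to transport the $k$-soliton decomposition of $w_0$ from the characteristic point $0$ to the nearby non-characteristic point $x<0$ via the exact self-similar covariance between $w_x=w_{x,T(x)}$ and $w_0=w_{0,0}$, and then to check that at the chosen time $s_k$ only the first soliton survives in $\H$. First I would write the covariance formula: expressing the spacetime point $(x+ye^{-s},T(x)-e^{-s})$ in both self-similar frames gives
\begin{equation*}
w_x(y,s)=\bigl(1-T(x)e^{s}\bigr)^{-\frac{2}{p-1}}\,w_0\!\left(\frac{xe^{s}+y}{1-T(x)e^{s}},\ s-\log\bigl(1-T(x)e^{s}\bigr)\right),
\end{equation*}
valid wherever the image point lies in $|y_0|<1$, i.e. on the overlap of the two backward light cones — which suffices for $\H$-estimates because $\rho=(1-y^2)^{2/(p-1)}$ degenerates at $y=\pm1$. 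Setting $b=1-T(x)/x$ (so $b\to0$ as $x\to0^-$ by \eqref{chapeau0}) and using $xe^{s_k}=-e^{L_k}$, one checks that $1-T(x)e^{s_k}=1+(1-b)e^{L_k}>0$ and that the time for $w_0$ corresponding to $s=s_k$ is exactly $S_k$ from \eqref{defsk}; moreover $e^{-S_k}=-T(x)+e^{-s_k}\to0$, so $S_k\to+\infty$. Hence the decomposition recalled in {\it (iv)} (or in Theorem \ref{propref}) applies at time $S_k$: up to replacing $u$ by $-u$, $\|(w_0(S_k),\partial_sw_0(S_k))-(\sum_{i=1}^k(-1)^i\kappa(d_i(S_k)),0)\|_{\H}\to0$ as $x\to0^-$.

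Then I would transport each soliton: a direct computation shows that $\kappa(d,\cdot)\mapsto(1-T(x)e^{s_k})^{-2/(p-1)}\kappa\bigl(d,\tfrac{xe^{s_k}+\cdot}{\,1-T(x)e^{s_k}}\bigr)$ equals $\lambda(d)\,\kappa\bigl(d/(1+\nu(d)),\cdot\bigr)$ with $\nu(d)=[b-(1-d)]xe^{s_k}$ and $\lambda(d)$ given by \eqref{deflambda}; with $d=\bar d_i(s_k)=d_i(S_k)$ these are exactly the quantities $\bar d_i^*(s_k)$, $\bar\nu_i(s_k)$ and $\lambda_i:=\lambda(\bar d_i(s_k))$ of \eqref{defbd1}--\eqref{deflambda} (so $\bar\lambda_1=\lambda_1$). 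Plugging the decomposition into the covariance formula,
\[
\vc{w_x(s_k)}{\partial_sw_x(s_k)}=\sum_{i=1}^k(-1)^i\lambda_i\vc{\kappa(\bar d_i^*(s_k))}{0}+(\text{transported remainder}),
\]
and it remains to prove: (a) the transported remainder, bounded by $C(L_k)\,\|(w_0(S_k),\partial_sw_0(S_k))-(\sum_{i=1}^k(-1)^i\kappa(d_i(S_k)),0)\|_{\H}$, tends to $0$ by a change-of-variables estimate for $\|\cdot\|_{\H}$; (b) $\|\lambda_i\kappa(\bar d_i^*(s_k))\|_{\H}\to0$ for every $i\ge2$; and (c) $|\lambda_1-1|\to0$. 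Claim (c) holds because $\bar\nu_1(s_k)=[b-(1-d_1(S_k))](-e^{L_k})$ vanishes faster than $1-d_1(S_k)^2$ as $x\to0^-$ — this is the sharp matching between the geometric quantity $b$ (essentially $-T(x)/x$) and the soliton parameter $1-d_1(S_k)$ that already underlies \eqref{chapeau0} — so that $\lambda_1\to1$ by \eqref{deflambda}.

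For (b) I would split into three regimes according to the sign of $\gamma_i$ in \eqref{defgi}. If $\gamma_i<0$ (indices $i>(k+1)/2$), then $\zeta_i(S_k)\to+\infty$, so $d_i(S_k)\to-1$, $1-d_i(S_k)^2\to0$ and $\lambda_i\to0$. If $\gamma_i=0$ (the middle index, $k$ odd), then $d_i(S_k)$ stays in a compact subset of $(-1,1)$, so $\bar\nu_i(s_k)$ is bounded below by a positive multiple of $e^{L_k}$, and $\lambda_i\lesssim e^{-2L_k/(p-1)}\le\epsilon$ as soon as $L_k\ge L(\epsilon)$. If $\gamma_i>0$ with $i\ge2$ (indices $2\le i<(k+1)/2$), then $\zeta_i(S_k)-\zeta_1(S_k)\to+\infty$ forces $\bar d_i^*(s_k)\to1$, with $1-\bar d_i^*(s_k)\to0$ and $\lambda_i$ bounded, so $\|\lambda_i\kappa(\bar d_i^*(s_k))\|_{\H}\to0$ because $\|\kappa(d)\|_{\H}\to0$ as $d\to\pm1$ (Lemma \ref{contk*}). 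Choosing $L$ large enough for the middle index and then $\delta$ small enough for the remaining $o(1)$'s yields $\|(w_x(s_k),\partial_sw_x(s_k))+(\kappa(\bar d_1^*(s_k)),0)\|_{\H}+|\bar\lambda_1-1|\le\epsilon$ for $x\in(-\delta,0)$ and $L_k\ge L$; the case $x>0$ is handled symmetrically, with the last soliton surviving instead of the first.

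I expect the main obstacle to be step (a): the Möbius map $y\mapsto(xe^{s_k}+y)/(1-T(x)e^{s_k})$ does not preserve $(-1,1)$, so one must carefully compare the weight $\rho$ and the factor $1-y^2$ appearing in $\|\cdot\|_{\H}$ on the two sides — with particular care near $y=-1$, where the overlap of the two cones ends — and couple this with the non-uniform separation rates \eqref{equid00} of the $\zeta_i(S_k)$; this is precisely the computation carried out in \cite{MZisol10}.
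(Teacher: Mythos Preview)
Your outline starts correctly with the covariance between $w_x$ and $w_0$, which is indeed the first step in \cite{MZisol10}. However there are three genuine gaps.

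First, step (a) is not ``precisely a computation'': the M\"obius map $y\mapsto (xe^{s_k}+y)/(1-T(x)e^{s_k})$ sends $(-1,1)$ only onto a subinterval $(y_1(x,s),1)\subsetneq(-1,1)$ of the $w_0$-domain, so the covariance formula gives you control of $w_x(\cdot,s_k)$ only on a proper subinterval of $(-1,1)$. No weight comparison can recover the missing region. In \cite{MZisol10} this gap is closed by a genuinely dynamical argument: one applies the covariance at an \emph{earlier} time $s$ where the uncontrolled zone $(-1,y_1(x,s))$ is still thin, modulates $w_x$ as a sum of $k$ generalized solitons $\kappa^*(d_i,\nu_i)$ there, and then propagates this decomposition forward in $s$ using the differential inequalities of Proposition~\ref{propdyn}. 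The solitons are lost one at a time (right to left) as their $\nu_i\to+\infty$, the last one ($i=2$) being lost at $s=s_k$.

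Second, your case $\gamma_i>0$, $i\ge 2$ in (b) rests on the claim that $\|\kappa(d)\|_{\H}\to0$ as $d\to\pm1$. This is false: while $\int\kappa(d)^2\rho\,dy\to0$, the gradient part $\int(\partial_y\kappa(d))^2(1-y^2)\rho\,dy$ stays bounded away from zero (by Lorentz invariance, or by direct computation after the substitution $\eta=(1-|d|)v$ near the concentration point). The actual mechanism is that the transported object is a generalized soliton $\kappa^*(\bar d_i,\bar\nu_i)$, and it is $\bar\nu_i\to+\infty$ (not $\bar d_i^*\to\pm1$) that forces $\|\kappa^*(\bar d_i,\bar\nu_i)\|_{\H}\to0$ along the heteroclinic orbit described after \eqref{defk*}.

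Third, your argument for (c) is circular. You invoke ``the sharp matching between $b$ and $1-d_1(S_k)$ that already underlies \eqref{chapeau0}'', but \eqref{chapeau0} only gives that $b$ and $1-d_1(S_k)$ have the same \emph{order} $|\log|x||^{-\gamma_1}$, not that $b-(1-d_1(S_k))=o(1-d_1(S_k))$; and in fact the refined matching is exactly what Corollary~\ref{corref} deduces \emph{from} this lemma via the trapping criterion. In \cite{MZisol10} the conclusion $|\bar\lambda_1-1|\le\epsilon$ comes instead from an energy argument: once only one (generalized) soliton remains, the Lyapunov functional $E(w_x(s))$ must approach $E(\kappa_0)$, which forces the surviving soliton to be pure.
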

\begin{proof}For the proof, see Section 3 in \cite{MZisol10}, in particular the proof of Proposition 3.10 in that paper. Nevertheless, let us summarize in the following the 3 main arguments of the proof, and refer the interested reader to \cite{MZisol10} for more details:\\
- applying the similarity variables' transformation \eqref{def:w} twice, we first recover an estimate on $u(x,t)$, then on $w_x(s)$,
but only on the interval $y\in (y_1(x,s),1)$, for some $y_1(x,s)>-1$;\\
- using a very good understanding of the dynamics of equation \eqref{eq:nlw_w} near the sum of decoupled solitons (the same dynamical study that we use later in this paper for the proof of Theorem \ref{mainth}, see Appendix \ref{appdyn} below), we recover the same estimate on the whole interval $y\in(-1,1)$;\\
- the estimate we recover on $w_x(s)$ shows in fact that, like $w_0$, $w_x(s)$ is still a sum of $k$ decoupled solitons, though the solitons are no longer ``pure'' (i.e. given by $\kappa(d)$ defined in \eqref{defkd}), but generalized, given by the family $\kappa^*(d, \nu)$ defined in \eqref{defk*}. As time increases, this family starts to loose its members, starting from the right soliton (with index $i=k$) up to the second soliton (with index $i=2$) which is lost at time $s=s_k$ given above in \eqref{defsk}. Thanks to an energy argument, we show that this unique left soliton is a ``pure'' soliton, in other words given by $-\kappa({\bar d}_1^*)$ where ${\bar d}_1^*(s_k)$ is defined above in \eqref{defbd1}.
\end{proof}
This lemma shows that $w_x(s_k)$ is close to $-\kappa({\bar d}_1^*)$.
As a matter of fact, we have the following trapping result from Merle and Zaag \cite{MZjfa07} which asserts that $w_x$ will eventually converge to a nearby soliton, with a near parameter:
\begin{prop}[A trapping criterion for non-characteristic points] There exist $\epsilon_0>0$ and $C_0>0$ such that if for some $x_0\in\m R$, $s_0\ge -\log T(x_0)$, $\theta \in \{ \pm 1 \}$, $d\in(-1,1)$ and $\epsilon\in(0, \epsilon_0]$, we have 
\begin{equation*}
\left\|\vc{w_{x_0}(s_0)}{\partial_s w_{x_0}(s_0)}-\theta\vc{\kappa(d)}{0}\right\|_{\H}\le \epsilon,
\end{equation*}
then $x_0\in\RR$, $w_{x_0}(s)\to\theta\kappa(T'(x_0))$ as $s\to \infty$
 and $\left|\arg\tanh T'(x_0)- \arg\tanh d\right|\le C_0 \epsilon$.
\end{prop}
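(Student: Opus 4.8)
The plan is to follow the standard ``modulation + energy'' approach used in the soliton-stability literature, adapted to the self-similar flow \eqref{eq:nlw_w}. First I would recall the structure of the linearized operator around a single soliton $\kappa(d)$: the operator obtained by linearizing \eqref{eq:nlw_wws} at $(\kappa(d),0)$ has a one-dimensional kernel spanned (modulo the scaling/Lorentz symmetry) by $\partial_d \kappa(d)$, a single exponentially growing eigenmode, and the rest of its spectrum has strictly negative real part, uniformly in $d$ on compact subsets of $(-1,1)$. The exponentially growing mode corresponds precisely to the non-characteristic/characteristic dichotomy: if $x_0 \in \SS$, this unstable mode must be switched on, and if it is switched off (which is the generic case), one gets $x_0 \in \RR$. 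Concretely, this last fact is exactly the content of the ``one-dimensional'' reduction already used by Merle and Zaag and recalled in the excerpt (the same dynamical study invoked for Lemma~\ref{lemtrapped} and in Appendix~\ref{appdyn}).

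The key steps, in order, are as follows. (1) Given $\epsilon \le \epsilon_0$ small, introduce a modulation parameter $d(s)$ by projecting $w_{x_0}(s) - \theta\kappa(d(s))$ orthogonally (in the $\H$ inner product) onto the directions conjugate to $\partial_d\kappa$; by the implicit function theorem this is well-defined and $C^1$ as long as the solution stays $O(\epsilon)$-close to the soliton family, and $|\arg\tanh d(s) - \arg\tanh d(s_0)|$ is controlled by the time-integral of $|d'(s)|$. (2) Decompose $w_{x_0}(s) = \theta\kappa(d(s)) + q(s)$, derive the modulation equation for $d'(s)$ and the equation for $q$, with $q$ orthogonal to the kernel direction. (3) Use the Lyapunov functional $E(w(s))$ from \eqref{defenergy}: since $E$ is non-increasing along the flow and $E(\kappa(d)) = E(\kappa_0)$ is independent of $d$, the quantity $E(w_{x_0}(s)) - E(\kappa_0)$ is a non-negative (after subtracting the soliton energy and using coercivity of the Hessian modulo the kernel and the unstable mode) super-solution that, together with the sign of the unstable mode, forces $q(s) \to 0$; this is where the assumption $\epsilon \le \epsilon_0$ enters, to make the quadratic part of $E$ dominate the cubic remainder. (4) From $\|q(s)\|_\H \to 0$ and the monotonicity of the unstable coordinate one concludes that the unstable mode never turns on (otherwise $\|q\|$ would grow past $\epsilon_0$), hence $w_{x_0}$ remains trapped, $d(s)$ converges to some limit $d_\infty$, and by the characterization of $\RR$ via the self-similar profile one gets $x_0 \in \RR$ and $d_\infty = T'(x_0)$. (5) Finally, integrating the modulation equation gives $|d'(s)| \lesssim \|q(s)\|_\H$ with exponential decay, so $|\arg\tanh d_\infty - \arg\tanh d| \le C_0\epsilon$; the factor $\arg\tanh$ is the natural coordinate because it linearizes the Lorentz action on the soliton parameter (compare \eqref{contkd}).

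The main obstacle is Step (3): establishing the coercivity estimate for the quadratic form associated with $E$ on the subspace orthogonal to \emph{both} the kernel direction $\partial_d\kappa$ and the unstable eigendirection, with constants uniform in $d$ over the relevant range, and then combining it with the energy monotonicity to close a differential inequality in the presence of the single unstable direction. This is delicate precisely because $E$ is not a genuine Lyapunov functional in the trapping sense until the unstable mode is killed, so one must run a bootstrap/continuity argument: assume the solution stays in the $\epsilon_0$-tube, derive the decay of $q$ and the smallness of the unstable coordinate, and check that this is strictly better than the a priori bound, so the tube is never exited. Since all of this dynamical machinery was already set up by Merle and Zaag in \cite{MZjfa07} (this Proposition is quoted from there), in the present paper it suffices to invoke it; the only genuinely new input needed downstream is the quantitative Lipschitz bound $|\arg\tanh T'(x_0) - \arg\tanh d| \le C_0\epsilon$, which follows from Step (5).
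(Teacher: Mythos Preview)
The paper does not prove this proposition at all: its ``proof'' consists of a citation to Theorem~3 of \cite{MZjfa07} for the original statement and to Proposition~1.1(ii) of \cite{MZisol10} for the precise version quoted here. You correctly recognize this at the end of your proposal, so in that sense your answer matches the paper's.

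Your additional sketch of the underlying argument is broadly in the right spirit (modulation, coercivity of the Hessian of $E$ modulo kernel and unstable direction, exponential decay of the transverse part, integration of the modulation equation to get the $\arg\tanh$ Lipschitz bound), but Step~(4) as written is circular: you assert the unstable mode ``never turns on (otherwise $\|q\|$ would grow past $\epsilon_0$)'', yet nothing in a pure bootstrap prevents exactly that from happening for some choice of initial data $\epsilon$-close to the soliton. The actual mechanism in \cite{MZjfa07} is not a naked bootstrap on the unstable coordinate but rather uses the global structure of the energy: $E$ is monotone, $E(\kappa(d))=E(\kappa_0)$ for all $d$, and there is a rigidity/blow-up dichotomy (if $E$ drops below $E(\kappa_0)$ the self-similar solution ceases to exist in finite $s$-time, which is impossible for $w_{x_0}$ since $T(x_0)$ is the true blow-up time). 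This dichotomy is what rules out the unstable branch and forces trapping; it is not something you can extract from local coercivity plus continuity alone.
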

\begin{proof}The original statement of this result was proved in Theorem 3 in \cite{MZjfa07}. The version we are citing comes from (ii) of Proposition 1.1. in \cite{MZisol10}. See this latter paper for the precise justification.
\end{proof}
From Lemma \ref{lemtrapped} and this trapping criterion, we already derived in \cite{MZisol10} the fact that $x$ is non-characteristic and that

\medskip

{\it for all $\epsilon>0$, there exists $\delta>0$ and $L>0$ such that for all $x\in(-\delta,0)$ and $L_k \ge L$, we have 
\begin{equation}\label{pp}
\left|\arg\tanh(T'(x))-\arg\tanh\left({\bar d}^*_1(s_k)\right)\right|
\le \epsilon.
\end{equation}
}

\medskip

\noindent Starting from this estimate and Lemma \ref{lemtrapped}, we still follow the proof of Proposition 3.10 in \cite{MZisol10}, adding however the following new ingredient, which directly follows from estimate \eqref{refequid} proved in Theorem \ref{propref}, and makes the only novelty with respect to \cite{MZisol10}:
\[
1-d_1(s)\sim 2 e^{2(\bar\alpha_1+\zeta_0)}s^{-\gamma_1}\mbox{ as }s\to \infty.
\]
Recall first from \eqref{chapeau0} and the definition \eqref{defgi} of $\gamma_1$ that for $|x|$ small enough, we have
\begin{equation}\label{p0}
\frac 1{C|\log|x||^{\gamma_1}}\le b \le \frac C{|\log|x||^{\gamma_1}}\mbox{ and }
\frac 1{C|\log|x||^{\gamma_1}}\le |T'(x)-1| \le \frac C{|\log|x||^{\gamma_1}}.
\end{equation}
Therefore, from the definitions given in Lemma \ref{lemtrapped} above, we have as $x\to 0$,
\begin{align}
S_k& = - \log |x|-\log(1+e^{-L_k})+O(|\log|x||^{-\gamma_1}),\nonumber\\
1-{\bar d}_1(s_k) &\sim 2 e^{2(\bar\alpha_1+\zeta_0)}|\log|x||^{-\gamma_1},\label{p2}\\
\bar \nu_1(s_k) &= O(|\log|x||^{-\gamma_1}).\label{p3}
\end{align}
Consider then $\epsilon>0$. Since we have by definition \eqref{deflambda} of $\bar \lambda_1$:
\[
{\bar\lambda_1}^{-(p-1)}=\left(1+\frac{\bar \nu_1}{1-\bar d_1}\right)\left(1+\frac{\bar \nu_1}{1+\bar d_1}\right),
\]
using \eqref{pp}, \eqref{p2} and \eqref{p3}, we see that for $|x|$ small and $L_k$ large, we have
\begin{equation*}
\frac{|\bar \nu_1|}{1-\bar d_1}\le C\epsilon.
\end{equation*}
Since we have $1- {\bar d}_1^*=1-{\bar d}_1 +O(\bar \nu_1)$ for small $\bar \nu_1$ from \eqref{defbd1}, using \eqref{p2}, the last line gives for $|x|$ small and $L_k$ large:
\begin{equation}\label{p5}
\left|1- {\bar d}_1^*-2 e^{2(\bar\alpha_1+\zeta_0)}|\log|x||^{-\gamma_1}\right|\le C \epsilon |\log|x||^{-\gamma_1}.
\end{equation}
Therefore, from \eqref{pp} together with \eqref{p5} and \eqref{p0}, we write
for $x<0$, $|x|$ small and $L_k$ large:
\begin{align*}
\left|T'(x) - {\bar d}_1^*\right|&\le \max \left(1-(T'(x))^2, 1-({\bar d}_1^*)^2\right)\left|\arg\tanh(T'(x))-\arg\tanh\left({\bar d}^*_1(s_k)\right)\right|\\
&\le C\epsilon |\log|x||^{-\gamma_1}.
\end{align*}
Since $\epsilon>0$ was arbitrary, using \eqref{p5}, we see that \eqref{cor1} follows when $x<0$. By integration, we get \eqref{cor0}, also when $x<0$.
It remains then to treat the case $x>0$.\\
{\bf Case $x>0$}: Introducing $u^\sharp(x^\sharp,t)=(-1)^ku(-x^\sharp,t)$, we see that $u^\sharp$ is also a solution of \eqref{eq:nlw_u} with $0$ as a characteristic point and that $T^\sharp(x^\sharp) = T(-x^\sharp)$. Thus, we reduce to the study of $u^\sharp$ for $x^\sharp<0$.\\
 Since we have from the definition of the similarity variables' transformation \eqref{def:w} that $w^\sharp_0(y^\sharp,s)=(-1)^kw_0(-y^\sharp,s)$, we derive from \eqref{profilew0} the following estimate (after reversing the order of the solitons): 
\begin{equation*}
\left\|\vc{w^\sharp_0(s)}{\ps w^\sharp_0(s)} - \vc{\ds\sum_{i=1}^{k} (-1)^i\kappa(D_i(s))}0\right\|_{\H} \to 0\mbox{ as }s\to \infty,
\end{equation*}
where $D_i(s) = -d_{k-i}(s)$ satisfies the following from \eqref{refequidw0} and the symmetry relation \eqref{antis} on $\bar \zeta_i(s)$:
\begin{equation*}
D_i(s) = -\tanh \Xi_i(s)\mbox{ and }\Xi_i(s)= - \zeta_{k-i}(s)  = -\bar \zeta_{k-i}(s) - \zeta_0=\bar \zeta_i(s)-\zeta_0.
\end{equation*}
Thus, up to replacing $\zeta_0$ by $-\zeta_0$, we see that we are in the case ``$x<0$'' already treated above, and the result follows for $u^\sharp$ with $x^\sharp<0$, hence for $u$ with $x>0$. This concludes the proof of Corollary \ref{corref}. 
\end{proof}

\section{Construction of a multi-soliton solution in similarity variables}\label{secweak}
In this section, we construct a multi-soliton solution in similarity variables for equation \eqref{eq:nlw_w}. Technically, we use the dynamical system formulation introduced in \cite{MZisol10}. For that reason, we introduce for all $d\in (-1,1)$ and $\nu >-1+|d|$, $\kappa^*(d,\nu,y) = (\kappa_1^*, \kappa_2^*)(d,\nu,y)$ where 
\begin{align}\label{defk*}
\kappa_1^*(d,\nu, y) & =
\kappa_0\frac{(1-d^2)^{\frac 1{p-1}}}{(1+dy+\nu)^{\frac 2{p-1}}}, \\
\kappa_2^*(d,\nu, y) & = \nu \pnu \kappa_1^*(d,\nu, y) =
-\frac{2\kappa_0\nu}{p-1}\frac{(1-d^2)^{\frac 1{p-1}}}{(1+dy+\nu)^{\frac {p+1}{p-1}}}.
\end{align}
We refer to these functions as ``generalized solitons'' or solitons for short. Notice that for any $\mu\in\m R$, $\kappa^*(d,\mu e^s,y)$ is a solution to equation \eqref{eq:nlw_w}. Then\\
- $\kappa^*(d,\mu e^s,y)\to (\kappa(d),0)$ in $\H$ as $s\to -\infty$;\\
- when $\mu=0$, we recover the stationary solutions $(\kappa(d),0)$ defined in \eqref{defkd};\\
- when $\mu>0$, the solution exists for all $(y,s) \in (-1,1)\times \m R$ and converges to $0$ in $\H$ as $s\to \infty$ (it is a heteroclinic connection between $(\kappa(d),0)$ and $0$);\\
- when $\mu<0$, the solution exists for all $(y,s) \in (-1,1)\times \left(-\infty, \log\left(\frac {|d|-1}\mu\right)\right)$ and blows up at time $s=\log\left(\frac {|d|-1}\mu\right)$.\\
We also introduce for $l=0$ or $1$, for any $d\in (-1,1)$ and $r\in \H$, 
\begin{gather}
\pp_l^d(r) =\phi\left(W_l(d), r\right), \quad \text{where} \label{defpdi} \\
\begin{aligned}
\phi(q,r) & := \int_{-1}^1 \left(q_1r_1+q_1' r_1' (1-y^2)+q_2r_2\right)\rho dy \\
& =\int_{-1}^1 \left(q_1\left(-\q L r_1+r_1\right) +q_2 r_2\right)\rho dy,\nonumber\\
W_l(d,y) & := (W_{l,1}(d,y), W_{l,2}(d,y)), \quad \text{with} \nonumber
\end{aligned} \\
W_{1,2}(d,y)(y)= \cc_1(d) \frac{(1-d^2)^{\frac 1{p-1}}(1-y^2)}{(1+dy)^{\frac 2{p-1}+1}},\quad
W_{0,2}(d,y) = \cc_0\frac {(1-d^2)^{\frac 1{p-1}}(y+d)}{(1+dy)^{\frac 2{p-1}+1}}, \label{defWl2-0}
\end{gather}
for some positive $\cc_1(d)$ and $\cc_0$, and $W_{l,1}(d,y)\in \H_0$ is uniquely determined as the solution of 
\begin{equation}\label{eqWl1-0}
-\q L r + r =\left(l - \frac{p+3}{p-1}\right)W_{l,2}(d) - 2 y\py W_{l,2}(d)+ \frac 8{p-1} \frac{W_{l,2}(d)}{1-y^2}
\end{equation}
normalized by the fact that $\pp_l^d(F_l(d)) =\phi\left(W_l(d), F_l(d)\right)$, where
\begin{equation*}
F_1(d,y)=(1-d^2)^{\frac p{p-1}}\vc{\frac{1}{(1+dy)^{\frac 2{p-1}+1}}}{\frac{1}{(1+dy)^{\frac 2{p-1}+1}}},\;\; F_0(d,y)=(1-d^2)^{\frac 1{p-1}}\vc{\ds\frac{y+d}{(1+dy)^{\frac 2{p-1}+1}}}{0}
\end{equation*}
(see estimate (3.57) in \cite{MZajm11} for more details).

\bigskip

Given $k\ge 2$ and $s_0>0$, we will construct the multi-solution as a solution
to the Cauchy problem of equation \eqref{eq:nlw_w} with initial data
\begin{equation}\label{w0}
w(y,s_0)=\sum_{i=1}^k (-1)^i \kappa^*\left(\bar d_i(s_0),\nu_{i,0}\right)\mbox{ with }|\nu_{i,0}|\le s_0^{-\frac 12 - |\gamma_i|},
\end{equation}
where $\bar d_i(s_0)$ is fixed by 
\[
\bar d_i(s_0)= -\tanh \bar \zeta_i(s_0),
\]
$\bar \zeta_i(s_0)$ is defined in \eqref{solpart} and $\gamma_i$ is defined in \eqref{defgi}. Such a solution will be denoted by $w(s_0,(\nu_{i,0})_i,y,s)$, or, when there is no ambiguity, by $w(y,s)$ or $w(s)$ for short. We will show that when $s_0$ is fixed large enough, we can fine-tune the parameters $\nu_{i,0}$ in the intervals $[-s_0^{-\frac 12 - |\gamma_i|},s_0^{-\frac 12 - |\gamma_i|}]$ so that the solution $w(s_0,(\nu_{i,0})_i,y,s)$ 
will decompose as a sum of $k$ decoupled solitons. This is the aim of the section:
\begin{prop}[A multi-soliton solution in the $w(y,s)$ setting]\label{propw} For any integer $k\ge 2$, there exist $s_0>0$, $\nu_{i,0}\in\m R$ for $i=1,\dots,k$ and $\zeta_0\in\m R$ such that equation \eqref{eq:nlw_w} with initial data (at $s=s_0$) given by \eqref{w0} is defined for all $(y,s)\in(-1,1)\times [s_0, \infty)$, satisfies $(w(s), \ps w(s))\in\H$ for all $s\ge s_0$, and 
\begin{equation}\label{cprofile}
\left\|\vc{w(s)}{\ps w(s)} - \vc{\ds\sum_{i=1}^{k} (-1)^{i+1}\kappa(d_i(s))}0\right\|_{\H} \to 0\mbox{ as }s\to \infty,
\end{equation}
for some continuous $d_i(s) = -\tanh \zeta_i(s)$ satisfying 
\begin{equation}\label{equid}
\zeta_i(s) -\bar \zeta_i(s)\to\zeta_0\mbox{ as }s\to\infty\mbox{ for }i=1,...,k
\end{equation}
where the $\bar\zeta_i(s)$ are introduced in \eqref{solpart}.
\end{prop}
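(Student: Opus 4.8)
The plan is to follow the by-now classical scheme for constructing prescribed-behavior solutions by a topological shooting argument, exactly as in \cite{CMMrmia11} and in the parabolic constructions \cite{MZdmj97,MZjfa08}. One linearizes equation \eqref{eq:nlw_w} around the candidate profile $\sum_{i=1}^k(-1)^i\kappa^*(\bar d_i(s),\nu_i(s))$, decomposes the remainder along the modulation directions (the $\kappa^*$-family parameters $d_i$ and $\nu_i$) plus an infinite-dimensional dispersive part, and shows that the only directions preventing convergence are the finitely many unstable/growing modes $\Pi^{d_i}_l$, $l=0,1$. A priori estimates (an energy/Lyapunov-type functional on the dispersive part, together with modulation ODEs with small errors) reduce the whole problem to controlling those finitely many scalar quantities; the topological (index-theoretic / no-retraction) argument then produces a choice of the initial parameters $(\nu_{i,0})_i$ in the box $\prod_i[-s_0^{-1/2-|\gamma_i|},s_0^{-1/2-|\gamma_i|}]$ for which they never exit a prescribed shrinking neighborhood of $0$, forcing \eqref{cprofile}. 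Finally, \eqref{equid} comes for free from the analysis of the modulation ODE system for the $\zeta_i$, which is precisely the perturbed Toda-type system \eqref{eqz} already analyzed in Proposition~\ref{prop:tl_conv} and Part~2 of the proof of Theorem~\ref{propref}.

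\textbf{Key steps, in order.} First I would set up the \emph{modulation}: given a solution $w$ staying close to a sum of $k$ generalized solitons, decompose $\vc{w(s)}{\ps w(s)} = \sum_{i=1}^k(-1)^i\kappa^*(d_i(s),\nu_i(s)) + q(s)$, where $(d_i(s),\nu_i(s))$ are chosen so that $q$ satisfies the $2k$ orthogonality conditions $\pp_l^{d_i}(q)=0$ for $i=1,\dots,k$, $l=0,1$; invoke the implicit function theorem and the nondegeneracy of $\phi(W_l(d),F_l(d))$ recalled before \eqref{eqWl1-0} to make this licit as long as the solitons are decoupled (which is guaranteed by $|\zeta_i-\bar\zeta_i|$ small since $\bar d_i-\bar d_{i+1}$ is bounded below along $\bar\zeta$). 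Second, derive the \emph{modulation equations}: differentiating the orthogonality conditions yields a closed system $\dot\zeta_i = c_1(e^{-\frac2{p-1}(\zeta_i-\zeta_{i-1})}-e^{-\frac2{p-1}(\zeta_{i+1}-\zeta_i)}) + $ (error), and $\dot\nu_i = \nu_i + $ (error), where the errors are controlled by $\|q\|_\H + $ (soliton interaction terms) $+\, O(s^{-1-\eta})$; here the $+\nu_i$ term is the instability that the topological argument must kill, and the $\zeta$-system is \eqref{eqz}. Third, establish the \emph{energy estimate} on $q$: using that $q$ is orthogonal to the (zero and unstable) modes of the linearized operator around each soliton and that the solitons are decoupled, one gets a coercive quadratic form and hence $\frac{d}{ds}\|q\|_\H^2 \lesssim -c\|q\|_\H^2 + $ (smaller terms), leading to a pointwise bound $\|q(s)\|_\H = O(s^{-1-\eta'})$ or similar, provided the $\nu_i$ stay small. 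Fourth, run the \emph{shooting argument}: define the exit time from the box for the rescaled unstable parameters $(s^{1/2+|\gamma_i|}\nu_i(s))_i$; show that on the boundary of the box the flow is strictly outgoing (a transversality/sign computation from the $\dot\nu_i=\nu_i+\cdots$ equations), so that if no initial datum avoided exit, the map $(\nu_{i,0})_i\mapsto$ (exit configuration on $\partial$box) would be a continuous retraction of a ball onto its sphere — impossible. Hence some $(\nu_{i,0})_i$ yields a global trapped solution. Fifth, \emph{conclude}: trapped $\Rightarrow$ $\nu_i(s)\to 0$ and $\|q(s)\|_\H\to0$, so \eqref{cprofile} holds; and the $\zeta_i$ satisfy \eqref{eqz}, so Proposition~\ref{prop:tl_conv}(ii) (equivalently \eqref{hadaf}) gives $\zeta_i(s)-\bar\zeta_i(s)\to\zeta_0$ for $\zeta_0$ the conserved-up-to-$o(1)$ center of mass, which is \eqref{equid}.

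\textbf{Main obstacle.} The delicate point is the interplay between the \emph{slowly separating solitons} and the topological argument. Unlike the standard multi-soliton constructions where the solitons separate linearly in time, here the centers $\bar\zeta_i(s)$ only separate like $\log s$, so the interaction terms $e^{-\frac2{p-1}(\zeta_{i+1}-\zeta_i)}$ decay only polynomially in $s$ (like $s^{-1}$ up to constants), and they are precisely of the same order as the main driving terms in the $\zeta$-modulation equations — they are not error terms. One must therefore treat the $\zeta$-system \emph{exactly} (that is what \eqref{eqz} and Part~2 of Section~\ref{secref} are for) rather than perturbatively, and check that the remaining genuinely-small errors (from $\|q\|_\H$, from the non-pure-soliton corrections, from the $O(s^{-1-\eta})$ in \eqref{eqz}) are compatible with the trapping. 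A second, more technical, difficulty is getting a clean coercivity estimate for the energy functional on $q$ uniformly in $s$ despite the moving, nearly-resonant solitons; this requires carefully using the $2k$ modulation orthogonality conditions together with the decoupledness, and is where the bulk of the analytic work (and the appeal to the dynamical study of Appendix~\ref{appdyn}) goes.
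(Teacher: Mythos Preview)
Your proposal is correct and follows essentially the same architecture as the paper: modulation via Proposition~\ref{lemode0}, the dynamical estimates of Proposition~\ref{propdyn}, a shrinking set, and a topological (no-retraction) argument applied only to the unstable $\nu_i$ directions.

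The one point worth sharpening is the bootstrap for the $\zeta_i$. You correctly identify in your ``Main obstacle'' that the interaction terms $e^{-\frac{2}{p-1}(\zeta_{i+1}-\zeta_i)}$ are of leading order and cannot be treated as errors, and that $\zeta_i$-control is needed \emph{during} the trapping (to bound $J,\bar J,\hat J$), not merely a posteriori via \eqref{hadaf}. The paper's concrete device for this is to linearize $\xi_i=\frac{2}{p-1}(\zeta_i-\bar\zeta_i)$ in the eigenbasis $(\boldsymbol e_i)$ of the matrix $M$ from Lemma~\ref{eigenm}, set $\boldsymbol\xi=\sum_i\phi_i\boldsymbol e_i$, and \emph{include the $\phi_i$ in the shrinking set} (Definition~\ref{defVa}). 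The point of Corollary~\ref{cordynphi} (and its remark) is that the quadratic error in the $\phi_i$-equations involves only $\tilde J=\sum_{j\ge 2}\phi_j^2$, not $\phi_1^2$; since $m_i\ge 1$ for $i\ge 2$, the $\phi_i$ with $i\ge 2$ are damped and automatically satisfy a strict bound, while $\phi_1$ (the neutral center-of-mass mode) stays bounded because its forcing is integrable. Thus the $\phi_i$, like $\|q\|_{\H}$, improve strictly inside the set and cannot cause exit, leaving only the $\nu_i$ for the topological argument---exactly as you outline. Your appeal to the analysis of Part~2 of Section~\ref{secref} is then what yields the limit $\phi_1(s)\to l_0$ (equivalently \eqref{equid}) once trapping is established.
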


\begin{nb}
Note from \eqref{w0} that initial data are in $H^1\times L^2(-1,1)$. 
Going back to the $u(x,t)$ formulation, we see that initial data is also in $H^1\times L^2(-1,1)$ of the initial section of the backward light-cone. Therefore, from the solution to the Cauchy-problem in light-cones, we see that the solution stays in $H^1\times L^2$ of any section.
\end{nb}

\bigskip

As one can see from \eqref{w0}, at the initial time $s=s_0$, $w(y,s_0)$ is a pure sum of solitons. From the continuity of the flow associated with equation \eqref{eq:nlw_w} in $\H$ (this continuity comes from the continuity of the flow associated with equation \eqref{eq:nlw_u} in $H^1\times L^2$ of sections of backward light-cones), $w(y,s)$ will stay close to a sum of solitons, at least for a short time after $s_0$. In fact, we can do better, and impose some orthogonality conditions, killing the zero and expanding directions of the linearized operator of equation \eqref{eq:nlw_w} around the sum of solitons. The following modulation technique from Merle and Zaag in \cite{MZisol10} is crucial for that.
\begin{prop}[A modulation technique; Proposition 2.1 of \cite{MZisol10}]\label{lemode0}
For all $A\ge 1$, there exist $E_0(A)>0$ and $\epsilon_0(A)>0$ such that for all  $E\ge E_0$ and $\epsilon\le \epsilon_0$,  if $v\in \H$ and for all $i=1,...,\k$, $(\hat d_i,\hat \nu_i)\in(-1,1)\times \m R$ are such that
\begin{equation*}
-1+\frac 1A \le \frac{\hat \nu_i}{1-|\hat d_i|}\le A,\;\;
\hat \zeta_{i+1}^*-\hat \zeta_i^*\ge E\mbox{ and }\|\hat q\|_{\H}\le \epsilon
\end{equation*}
where $\hat q = v-\ds\sum_{j=1}^{\k}(-1)^j \kappa^*(\hat d_j,\hat \nu_j)$ and $\hat d_i^*=\frac{\hat d_i}{1+\hat \nu_i} = -\tanh \hat \zeta_i^*$, then, there exist $(d_i,\nu_i)$ such that for all $i=1,\dots,\k$ and $l=0,1$,
\begin{enumerate}
\item $\pp_l^{d_i^*}(q)=0$ where $q:=v-\sum_{j=1}^{\k}(-1)^j \kappa^*(d_j,\nu_j)$,
\item $\ds \left| \frac{\nu_i}{1-|d_i|}- \frac{\hat \nu_i}{1-|\hat d_i|}\right|+|\zeta_i^*-\hat \zeta_i^*|\le C(A)\|\hat q\|_{\H}\le C(A)\epsilon$,
\item $\ds -1+\frac 1{2A} \le \frac{\nu_i}{1-|d_i|}\le A+1$,
$\ds \zeta_{i+1}^*-\zeta_i^*\ge \frac E2$ and $\|q\|_{\H} \le C(A)\epsilon$,
\end{enumerate}
where $d_i^*=\frac{d_i}{1+\nu_i} = -\tanh \zeta_i^*$. 
\end{prop}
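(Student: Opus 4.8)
Since this is Proposition 2.1 of \cite{MZisol10}, the proof in this paper reduces to a citation; the argument I would give is the standard modulation argument based on a quantitative implicit function theorem. Fix $A\ge1$ and parametrize the solitons by $\pi=(\zeta_i^*,\mu_i)_{1\le i\le k}\in\m R^{2k}$, where $\mu_i:=\nu_i/(1-|d_i|)$ and $d_i^*=d_i/(1+\nu_i)=-\tanh\zeta_i^*$; on the region $\{\mu_i\ge-1+\tfrac1{2A},\ |d_i|<1\}$ the change of variables $(d_i,\nu_i)\leftrightarrow(\zeta_i^*,\mu_i)$ is a $C^1$ diffeomorphism with derivative bounded above and below, so it is equivalent to solve for $\pi$. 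Introduce the $\m R^{2k}$-valued map
\[
\Phi(\pi)=\big(\pp_l^{d_i^*}(q_\pi)\big)_{1\le i\le k,\ l=0,1},\qquad q_\pi:=v-\sum_{j=1}^{k}(-1)^j\kappa^*(d_j,\nu_j),
\]
and let $\hat\pi$ be the point corresponding to the data $(\hat d_i,\hat\nu_i)_i$, so that $q_{\hat\pi}=\hat q$. We look for $\pi$ close to $\hat\pi$ with $\Phi(\pi)=0$. Since $\pp_l^d(r)=\phi(W_l(d),r)$ and $|\phi(q,r)|\le\|q\|_\H\|r\|_\H$, while $\sup_{|d|<1}\|W_l(d)\|_\H\le C$ (a uniform bound following from the explicit formulas \eqref{defWl2-0}--\eqref{eqWl1-0}, cf. \cite{MZajm11}), we get $\|\Phi(\hat\pi)\|\le C\|\hat q\|_\H\le C\epsilon$.

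Next I would compute the differential $D\Phi(\hat\pi)$. Differentiating $q_\pi$ in $\pi$ produces the soliton-tangent directions $\partial_{\zeta_j^*}\kappa^*(d_j,\nu_j)$ and $\partial_{\mu_j}\kappa^*(d_j,\nu_j)$, plus a term coming from the $\pi$-dependence of $W_l(d_i^*)$ paired with $q$, which is $O(\|q\|_\H)=O(\epsilon)$. Thus, up to an $O(\epsilon)$ error, $D\Phi(\hat\pi)$ is the block matrix with $2\times2$ diagonal blocks $M_i=\big(\phi(W_l(\hat d_i^*),\partial_{\pi_m}\kappa^*(\hat d_i,\hat\nu_i))\big)_{l,m}$ and off-diagonal blocks built from $\phi(W_l(\hat d_i^*),\partial_{\pi_m}\kappa^*(\hat d_j,\hat\nu_j))$, $i\ne j$. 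The functions $W_0,W_1$ are constructed precisely so as to be dual to the two soliton-tangent directions (this is the point of the normalization against $F_0,F_1$), whence $M_i$ is invertible with $\|M_i^{-1}\|\le C(A)$ \emph{uniformly} on the admissible range of $(\hat d_i,\hat\nu_i)$, including $\hat d_i\to\pm1$. The off-diagonal blocks are controlled by the decoupling of the solitons: since the hyperbolic centers obey $|\hat\zeta_i^*-\hat\zeta_j^*|\ge|i-j|E$, the overlap integrals defining them are $O(e^{-\frac2{p-1}E})$. Taking $E\ge E_0(A)$ large and $\epsilon\le\epsilon_0(A)$ small, $D\Phi$ is then invertible with inverse bounded by $C(A)$ on a $C(A)\epsilon$-ball around $\hat\pi$ (using in addition that the second derivatives of $\Phi$ are bounded there, which again reduces to uniform estimates on $\kappa^*$, $W_l$ and their derivatives).

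With these two ingredients, a quantitative implicit function theorem — equivalently the Picard iteration $\pi_{n+1}=\pi_n-D\Phi(\hat\pi)^{-1}\Phi(\pi_n)$ — yields a unique $\pi$ with $\Phi(\pi)=0$ and $\|\pi-\hat\pi\|\le C(A)\|\Phi(\hat\pi)\|\le C(A)\|\hat q\|_\H$. Translating back to $(d_i,\nu_i)$ gives items 1 and 2; item 3 follows from item 2 and the triangle inequality, the parameters having moved by at most $C(A)\epsilon$, so that the conditions $-1+\tfrac1{2A}\le\mu_i$, $\mu_i\le A+1$, $\zeta_{i+1}^*-\zeta_i^*\ge E/2$ persist after shrinking $\epsilon_0(A)$, and
\[
\|q\|_\H\le\|\hat q\|_\H+\Big\|\sum_{j=1}^k(-1)^j\big(\kappa^*(\hat d_j,\hat\nu_j)-\kappa^*(d_j,\nu_j)\big)\Big\|_\H\le\epsilon+C(A)\epsilon
\]
by the Lipschitz dependence of $\kappa^*$ on its parameters (a continuity statement in the spirit of \eqref{contkd}).

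The main obstacle is the uniform invertibility of the blocks $M_i$ together with the uniform $\H$-bounds on $W_l(d)$, on its $d$-derivative, and on the $\pi$-derivatives of $\kappa^*$, all the way up to $d=\pm1$; this boundary degeneracy is exactly what the weight $\rho=(1-y^2)^{\frac2{p-1}}$ and the careful choice of the $W_l$ in \cite{MZajm11,MZisol10} are designed to absorb, and verifying it amounts to exploiting the covariance of $\phi$ and $\rho$ under the Lorentz-type substitution $y\mapsto(y+d)/(1+dy)$, which reduces each such estimate to the single base point $d=0$.
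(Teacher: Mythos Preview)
Your proposal is correct: the paper does not give a proof here but simply cites Proposition~2.1 of \cite{MZisol10}, and the argument you outline---a quantitative implicit function theorem applied to the map $\Phi(\pi)=(\pp_l^{d_i^*}(q_\pi))_{i,l}$, with diagonal invertibility coming from the normalization of the $W_l$ against the soliton-tangent directions and off-diagonal smallness from the decoupling condition---is exactly the standard modulation scheme carried out in that reference.
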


Let us apply this proposition with $r=w(y,s_0)$ \eqref{w0}, $\hat d_i=\bar d_i(s_0)$ and $\hat \nu_i=\nu_{i,0}$. Clearly, we have $\hat q=0$. Then, from \eqref{w0}, \eqref{solpart} and straightforward calculations, we see that
\[
\frac{|\hat \nu_i|}{1-|\hat d_i|}\le \frac C{\sqrt{s_0}} \quad \text{and} \quad \hat \zeta_{i+1}^*-\hat \zeta_i^*\ge \frac{(p-1)}4 \log s_0
\]
for $s_0$ large enough. Therefore, Proposition \ref{lemode0} applies with $A=2$ and from the continuity of the flow associated with equation \eqref{eq:nlw_w} in $\H$, we have a maximal $\bar s=\bar s(s_0,(\nu_{i,0})_i)>s_0$ such that $w$ exists for all time $s\in [s_0, \bar s)$ and $w$ can be modulated 
in the sense that
\begin{equation}\label{defq}
w(y,s) = \sum_{i=1}^k (-1)^i \kappa^*(d_i(s), \nu_i(s))+q(y,s)
\end{equation}
where the parameters $d_i(s)$ and $\nu_i(s)$ are such that for all $s\in[s_0, \bar s]$,
\[
\pp_l^{d_i^*(s)}(q(s)) =0,\;\;\forall l=0,1,\;\;i=1,\dots,k
\]
and
\begin{equation}\label{conmod}
\frac{|\nu_i(s)|}{1-|d_i(s)|}\le s_0^{-1/4},\;\;
\zeta_{i+1}^*(s)-\zeta_i^*(s)\ge \frac{(p-1)}8 \log s_0\quad \text{and} \quad \|q(s)\|_{\H}\le \frac 1{\sqrt{s_0}}.
\end{equation}
Two cases then arise:

- either $\bar s(s_0,(\nu_{i,0})_i)=+\infty$;

- or $\bar s(s_0,(\nu_{i,0})_i)<+\infty$ and one of the $\le$ symbol in \eqref{conmod} is a $=$.

At this stage, we see that controlling the solution $w(s)\in \q H$ is equivalent to controlling $q\in \q H$, $(d_i(s))_i\in(-1,1)^k$ and $(\nu_i(s))_i\in\m R^k$. Introducing
\begin{equation}\label{defJ}
J = \sum_{i=2}^k e^{-\frac{2}{p-1} (\zeta_i - \zeta_{i-1})}, \quad \bar J = \sum_{i=1}^k \frac{|\nu_i|}{1-d_i^2}, \quad \hat J = \sum_{i=1}^k e^{- \frac{\bar p}{p-1}(\zeta_i - \zeta_{i-1})}
\end{equation}
where
\begin{equation}\label{defpb}
 \bar p =
\begin{cases}
p & \text{ if } p <2, \\
2 - 1/100 & \text{ if } p =2, \\
2 & \text{ if } p >2,
\end{cases}
\end{equation}
we recall from \cite{MZisol10,MZajm11} 
the dynamical estimates satisfied by those components:
\begin{prop}[Dynamics of the parameters]\label{propdyn}
There exists $\delta>0$
such that for $s_0$ large enough and for all $s\in[s_0,\bar s)$, we have 
\begin{align}
\frac{| \dot \nu_i - \nu_i |}{1-d_i^2} & \le C  \left( \| q \|_{\q H}^2 + J + \| q \|_{\q H} \bar J\right) 
\label{est:nu} \\
\left|\frac {\dot \zeta_i }{c_1(p)} - (e^{-\frac{2}{p-1}  (\zeta_{i}-\zeta_{i-1})} - e^{-\frac{2}{p-1}  (\zeta_{i+1}-\zeta_{i})} )\right| & \le C(\| q \|_{\q H}^2 + (J +\| q \|_{\q H}) \bar J + J^{1+\delta}), \label{est:zeta} \\
\| q(s) \|_{\q H}^2 & \le C e^{-\delta(s-s_0)} \| q(s_0) \|_{\q H}^2 + C \hat J(s)^2, \label{est:q}
\end{align}
where $\zeta_i(s) = - \arg\tanh d_i(s)$, $c_1(p)>0$ was already introduced in \cite[Proposition 3.2]{MZajm11}, $J$, $\bar J$ and $\hat J$ are introduced in \eqref{defJ}.
\end{prop}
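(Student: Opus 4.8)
The plan is to treat this as a standard modulation computation and to assemble the estimates from \cite{MZisol10,MZajm11}, whose setup coincides with ours: derive the equation satisfied by the error $q$, obtain the modulation ODEs for $(\zeta_i,\nu_i)$ by differentiating the orthogonality conditions, and close an energy estimate for $q$. For the first step, inserting \eqref{defq} into the first order system \eqref{eq:nlw_wws} and using that each $\kappa^*(d_i,\mu e^s)$ solves \eqref{eq:nlw_w} exactly when $\nu_i=\mu e^s$, one gets
\begin{align*}
\ps q = {}& \q L_{\g d}\, q + \sum_{i=1}^k (-1)^i\Big[\big(\dot\zeta_i - c_1 G_i(\g\zeta)\big)\,\partial_\zeta \kappa^*(d_i,\nu_i) + (\dot\nu_i-\nu_i)\,\pnu\kappa^*(d_i,\nu_i)\Big] \\
& + \q G(\g d,\g\nu) + \q Q(q),
\end{align*}
where $\q L_{\g d}$ is the linearization of \eqref{eq:nlw_wws} about $\sum_i(-1)^i\kappa^*(d_i,\nu_i)$, $G_i(\g\zeta)=e^{-\frac{2}{p-1}(\zeta_i-\zeta_{i-1})}-e^{-\frac{2}{p-1}(\zeta_{i+1}-\zeta_i)}$ is the Toda-type right-hand side, $\q G$ gathers the soliton--soliton interactions (size $O(J)$, with a remainder $O(J^{1+\delta})$ after extracting the resonant part proportional to the $G_i$), and $\q Q(q)=O(\|q\|_\H^2)$ is at least quadratic.

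For the modulation equations, I would differentiate $\pp_l^{d_i^*(s)}(q(s))=0$ in $s$, getting $0=\phi(W_l(d_i^*),\ps q)+\dot d_i^*\,\phi(\partial_d W_l(d_i^*),q)$, where the last term is a small perturbation since $\|q\|_\H\le s_0^{-1/2}$. Substituting the equation above, using that the $W_l(d)$ are constructed through \eqref{defWl2-0}--\eqref{eqWl1-0} precisely so that $\phi(W_l(d),\q L_d\,\cdot)$ kills the null and expanding modes, and using that well-separated solitons interact weakly (so $\phi(W_l(d_i^*),\cdot)$ against derivatives of $\kappa^*(d_j,\nu_j)$, $j\neq i$, is exponentially small), one is left with a linear system in the unknowns $\dot\zeta_i-c_1 G_i$ and $\dot\nu_i-\nu_i$ whose matrix is a small perturbation of an invertible block-diagonal one (the blocks normalized by the $F_l(d_i)$). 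Inverting it produces \eqref{est:nu} and \eqref{est:zeta}; the factor $\bar J$ appears because displacing a soliton with $\nu_i\neq 0$ feeds a term of order $|\nu_i|/(1-d_i^2)$ into the projections.

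For the energy estimate on $q$, I would introduce a quadratic functional equivalent to $\|q\|_\H^2$ on the modulation manifold, of the form
\[
\varphi(s)=\iint\Big(\tfrac12 q_2^2+\tfrac12(\py q_1)^2(1-y^2)+\tfrac{p+1}{(p-1)^2}q_1^2-\tfrac p2\big|\textstyle\sum_{i}(-1)^i\kappa_1^*(d_i,\nu_i)\big|^{p-1}q_1^2\Big)\rho\,dy,
\]
possibly corrected by lower-order terms handling the drift $-2y\,\partial^2_{y,s}$. Coercivity $\varphi(s)\ge c\|q(s)\|_\H^2$ would follow by combining the single-soliton positivity of the linearized quadratic form on $\{\pp_0^d(\cdot)=\pp_1^d(\cdot)=0\}$ from \cite{MZjfa07} with an almost-orthogonality/localization argument gluing the $k$ bubbles, the gluing error $O(e^{-c(\zeta_{i+1}-\zeta_i)})$ being absorbed by \eqref{conmod} once $s_0$ is large. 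Differentiating $\varphi$ along the flow, the damping $-\frac{p-3}{p-1}q_2$ together with the spectral gap gives a term $\le-\delta\|q\|_\H^2$, the sources being $\lesssim \hat J^2+\|q\|_\H\bar J+\dots$; a Gronwall argument then yields $\varphi(s)\le Ce^{-\delta(s-s_0)}\varphi(s_0)+C\hat J(s)^2$, i.e.\ \eqref{est:q}. The exponent $\bar p$ of \eqref{defpb} is chosen so that $\hat J^2$ dominates the cubic interaction remainders with the right summability, which forces the case split $p\lessgtr 2$.

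The genuinely delicate point is the coercivity used in the energy step: gluing the $k$ single-bubble positivity estimates, modulo the $2k$-dimensional obstruction space, uniformly as the solitons drift apart, without degrading the spectral gap. Everything else is bookkeeping of interaction terms with sharp exponents; the three inequalities are tailored to close as a coupled system in the next subsection, so beyond checking that our hypotheses \eqref{defq}--\eqref{conmod} match those of \cite{MZisol10,MZajm11}, little is new here.
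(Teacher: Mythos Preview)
Your sketch is correct and follows the same modulation/energy strategy as the paper, which in fact just defers to \cite{MZisol10,MZajm11} and only spells out the refinement needed for \eqref{est:zeta}. The one cosmetic difference is organizational: the paper keeps the raw term $d_j'\,\partial_d\kappa^*$ in the equation for $q$ and \emph{discovers} the Toda right-hand side $c_1G_i$ by computing the projection $\pp_0^{d_i^*}((0,R))$ of the interaction remainder (see \eqref{Rproj1}--\eqref{term2}), rather than pre-subtracting $c_1G_i$ from $\dot\zeta_i$ as you do---but the two presentations are equivalent.
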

\begin{proof} This statement is a small refinement of Claims 3.8 and 3.9 in \cite{MZajm11} and Proposition 3.2 in \cite{MZisol10} where the authors handle the same equation. For that reason, we leave the proof to Appendix \ref{appdyn}.
\end{proof}

From \eqref{est:zeta} and \eqref{conmod}, we see that $(\zeta_i(s))_i$ satisfies a perturbed version of the system \eqref{eq:tl} studied in Section \ref{secref}. Moreover, as one can see from our purpose stated in Proposition \ref{propw}, our aim is to show the existence of a solution with $\zeta_i(s) \sim \bar \zeta_i(s)$ as $s\to \infty$ (at least when $i\neq \frac{k+1}2$). Hence, it is natural to do as in \eqref{defxi0} in Section \ref{secref} and linearize system \eqref{est:zeta} around $(\bar \zeta_i(s))_i$ by introducing
\begin{equation}\label{defxi1}
 \xi_i(s) = \frac 2{p-1}(\zeta_i(s) - \bar \zeta_i(s))
 \end{equation}
If $\boldsymbol{\xi}(s)=(\xi_1(s),\dots, \xi_k(s))$, then we obtain the following perturbed version of system \eqref{eqlin}: For all $s\in [s_0, \bar s)$:
\begin{equation}\label{eqlin1} 
\left|\dot {\boldsymbol \xi}(s) - \frac 1s M \boldsymbol \xi(s)\right|\le \frac Cs |\boldsymbol\xi(s)|^2+C(\| q(s) \|_{\q H}^2 + (J(s) +\| q (s)\|_{\q H}) \bar J(s) + J(s)^{1+\delta}),
\end{equation}
where the self-adjoint $k\times k$ matrix $M$ is introduced in \eqref{defM} and is diagonalizable as stated in Lemma \ref{eigenm} (note that here we keep the time variable $s$ and don't work with $\tau = \log s$). It is then natural to work in the basis defined by its eigenvectors $(\boldsymbol e_i)_i$ by introducing 
$\boldsymbol{\phi}(s) = (\phi_1(s), \dots, \phi_k(s))$ defined by
\begin{equation}\label{defphi}
\boldsymbol \xi(s) = \sum_{i=1}^k \phi_i(s) \boldsymbol e_i.
\end{equation}
Note that thanks to all these changes of variables, controlling $w$ is equivalent to the control of $(q,\boldsymbol \phi,(\nu_i)_i)$. As a matter of fact, in order to control $w$ near multi-solitons, we introduce the following set:
\begin{defi}[Definition of a shrinking set for the parameters]\label{defVa}
We say that 
$w\in \q V(s_0,s)$ if and only if  
\begin{equation}\label{defV}
\begin{array}{>{\ds}l}
s^{1/2+\eta} \| q \|_{\q H} \le 1, \quad \forall i =1, \dots, k, \quad s^{1/2 + |\gamma_i|} |\nu_i|  \le 1, \\
\forall i =2, \dots, k, \quad s^{\eta} |\phi_i |  \le 1, \quad \text{and} \quad s_0^\eta | \phi_1 | \le 1,
\end{array}, 
\end{equation}
where
\begin{equation}\label{defeta} 
\eta = \frac{1}{4} \min \left\{1,\delta, \frac{\bar p}{2} - \frac{1}{2} \right\},
\end{equation}
$\delta>0$ is defined in Proposition \ref{propdyn}  and $\bar p$ is defined in \eqref{defpb}.
\end{defi}
From the existence of $\bar s$, we know that there is a maximal $s^*(s_0,(\nu_{i,0})_i)\in [s_0, \bar s)$ such that for all $s\in[s_0, s^*)$, $w(s) \in \q V(s_0,s)$ and:\\
- either $s^*=+\infty$,\\
- or $s^*<+\infty$ and from continuity, $w(s^*) \in \partial \q V(s_0,s^*)$, in the sense that one $\le$ symbol in \eqref{defV} has to be replaced by the $=$ symbol. 

\medskip

Our aim is to show that for $s_0$ large enough, one can find a parameter $(\nu_{i,0})_i$ in $\prod_{i=1}^k[-s_0^{-\frac 12 - |\gamma_i|},s_0^{-\frac 12 - |\gamma_i|}]$ such that 
\begin{equation}\label{aim}
s^*(s_0,(\nu_{i,0})_i)=+\infty.
\end{equation}
Introducing
\begin{equation}\label{deftj}
\tilde J = \sum_{j=2}^k \phi_j^2
\end{equation}
(we emphasize that the sum's index runs from $2$ to $k$, and not from $1$ to $k$),
we derive from \eqref{est:zeta} the following differential inequality satisfied by $\boldsymbol \phi(s)$:
\begin{cor}[Dynamics for $\phi_i$]\label{cordynphi}
For all $s\in [s_0, s^*)$,
\begin{equation}
\left| \dot \phi_i + \frac{m_i}{s} \phi_i \right| \le C \frac{\tilde J}{s} + C \left( \| q \|_{\q H}^2 + (J +\| q \|_{\q H}) \bar J + J^{1+\delta} \right).
\label{est:phi}
\end{equation}
\end{cor}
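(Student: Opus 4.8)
The plan is to derive \eqref{est:phi} directly from the estimate \eqref{est:zeta} by a change-of-basis computation, with the key point being that the change of variables \eqref{defxi1} and the diagonalization of $M$ convert the dynamical estimate for $\dot \zeta_i$ into one for $\dot \phi_i$ with the claimed decay rate $-m_i/s$. First I would recall from \eqref{eqlin1} that, as a consequence of \eqref{est:zeta} together with the asymptotic expansion \eqref{solpart} of $\bar\zeta_i(s)$ (so that $\dot{\bar\zeta}_i = -\gamma_i/(2s)$ and the linearization of the Toda-type nonlinearity around $(\bar\zeta_i(s))_i$ produces exactly the matrix $\frac1s M$), the vector $\boldsymbol\xi(s)$ defined by \eqref{defxi1} satisfies
\[
\left|\dot{\boldsymbol\xi}(s) - \frac1s M\boldsymbol\xi(s)\right| \le \frac Cs|\boldsymbol\xi(s)|^2 + C\bigl(\|q\|_{\q H}^2 + (J+\|q\|_{\q H})\bar J + J^{1+\delta}\bigr).
\]
Then I would project onto the eigenbasis $(\boldsymbol e_i)_i$ of $M$ via \eqref{defphi}: since $M\boldsymbol e_i = -m_i\boldsymbol e_i$ with $m_i = i(i-1)/2$ by Lemma \ref{eigenm}, the linear part of the $i$-th component becomes exactly $-\frac{m_i}{s}\phi_i$, which accounts for the main term on the left of \eqref{est:phi}.

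The remaining work is to bound the error terms after projection. The coordinate change $\boldsymbol\xi \mapsto \boldsymbol\phi$ is a fixed linear isomorphism of $\m R^k$, so it is bounded both ways; hence the inhomogeneous terms $\|q\|_{\q H}^2 + (J+\|q\|_{\q H})\bar J + J^{1+\delta}$ are preserved up to a constant, giving the second group of terms on the right of \eqref{est:phi}. The quadratic term $\frac Cs|\boldsymbol\xi|^2$ needs to be re-expressed: writing $|\boldsymbol\xi|^2 = \sum_j \xi_j^2 \le C\sum_j \phi_j^2$, one must separate the $j=1$ contribution (the neutral direction, eigenvalue $0$, eigenvector ${}^t(1,\dots,1)$) from $j\ge 2$. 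Here the zero-barycenter relation is crucial: for the solution of the actual (perturbed) system, $\sum_i \xi_i$ is not exactly zero, but \eqref{defxi} / \eqref{limitz} show $\phi_1$ is the (slowly varying) barycenter coordinate, and in fact on $\q V(s_0,s)$ one has $|\phi_1|\le s_0^{-\eta}$, while $\sum_{j\ge 2}\phi_j^2 = \tilde J$ by definition \eqref{deftj}. So $\frac Cs|\boldsymbol\xi|^2 \le \frac Cs(\phi_1^2 + \tilde J) \le \frac Cs \tilde J + \frac{C}{s}\phi_1^2$, and the $\phi_1^2$ piece can be absorbed: either into $\tilde J/s$ times a harmless constant using that $|\phi_1|$ is small, or — more carefully — one notes that the $\phi_1$-component only couples to itself at linear order (eigenvalue $0$) so its quadratic feedback is itself $O(\tilde J/s + \dots)$. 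Either way one arrives at $\frac Cs\tilde J$ plus the inhomogeneous terms, which is \eqref{est:phi}.

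The main obstacle I anticipate is bookkeeping the $\phi_1$ direction correctly — making sure that the quadratic term $\frac1s|\boldsymbol\xi|^2$ does not leave behind an uncontrolled $\frac1s\phi_1^2$ that is worse than $\frac1s\tilde J$. On $\q V(s_0,s)$ we only control $|\phi_1|$ by $s_0^{-\eta}$ (a constant in $s$), not by a decaying power of $s$, so $\frac1s\phi_1^2 \le \frac{s_0^{-2\eta}}{s}$ is genuinely only $O(1/s)$, which is \emph{not} subsumed by $\tilde J/s$ in general. The resolution is that the quadratic nonlinearity in \eqref{eqlin1}, when expanded, is a sum of products $\xi_a\xi_b$ coming from the Toda exponentials; each such product involves at least one \emph{difference} $\xi_i - \xi_{i-1}$, and differences of the $\xi$'s annihilate the constant vector $\boldsymbol e_1$, so they depend only on $\phi_2,\dots,\phi_k$. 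Thus the genuine quadratic remainder is $O(\tilde J/s)$ with no $\phi_1^2$ appearing — one must trace through the structure of the nonlinearity in \eqref{eq:ptl}/\eqref{eqz} rather than use the crude bound $|\boldsymbol\xi|^2\le C\sum\phi_j^2$. Once that structural observation is in place the corollary follows by a routine projection argument, so I would present the proof as: (1) rewrite \eqref{est:zeta} as \eqref{eqlin1}; (2) observe the quadratic term depends only on the differences $\xi_i-\xi_{i-1}$, hence only on $(\phi_j)_{j\ge2}$, and is therefore $\le C\tilde J/s$; (3) project onto $\boldsymbol e_i$ and read off the eigenvalue $-m_i$; (4) note the linear change of basis preserves the inhomogeneous terms up to constants.
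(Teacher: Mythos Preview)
Your proposal is correct and follows essentially the same route as the paper: derive a sharpened version of \eqref{eqlin1} in which the quadratic remainder is $O\bigl(\frac{1}{s}\sum_i(\xi_i-\xi_{i-1})^2\bigr)$ rather than the crude $O(\frac{1}{s}|\boldsymbol\xi|^2)$, observe that differences $\xi_i-\xi_{i-1}$ annihilate $\boldsymbol e_1={}^t(1,\dots,1)$ so this is $O(\tilde J/s)$, and then project onto the eigenbasis. One presentational point: your step (1) should not literally pass through \eqref{eqlin1} and then backtrack, since \eqref{eqlin1} already records only the crude bound; instead Taylor-expand the Toda exponentials in \eqref{est:zeta} directly to exhibit the $(\xi_i-\xi_{i-1})^2$ structure, as you correctly indicate in your discussion of the obstacle.
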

\begin{nb}
This corollary is more subtle than one may think. Indeed, if we project the differential inequality \eqref{eqlin1} on the eigenvalues of $M$, then we trivially obtain almost the same identity as \eqref{est:phi}, except that 
the right-hand side has an additional term  $C\frac{\phi_1^2}s$.
With more work, we get the subtle version, which suits our purposes.
\end{nb}
\begin{proof}
This is a direct consequence of the estimate \eqref{est:zeta}. 
First recall from the definitions \eqref{solpart}, \eqref{defalphai} and \eqref{defsi} of $\bar \zeta_i(s)$, $\bar\alpha_i$ and $\sigma_i$ that
\[ 
e^{- \frac{2}{p-1} (\ds \bar \zeta_{i} - \bar \zeta_{i-1})} = \frac{p-1}{2c_1} \frac{\sigma_{i-1}}{s}. 
\]
Then, as $w \in \q V(s_0,s)$, $\xi_i$ defined in \eqref{defxi1} are bounded, we have the expansion (uniform in $s$):
\begin{align*}
e^{- \frac{2}{p-1} (\zeta_{i} - \zeta_{i-1}) } & = e^{- \frac{2}{p-1} (\ds \bar \zeta_{i} - \bar \zeta_{i-1}) -(\xi_i - \xi_{i-1})} \\
& = e^{- \frac{2}{p-1} (\ds \bar \zeta_{i} - \bar \zeta_{i-1})} \left( 1 - (\xi_i - \xi_{i-1}) + O ((\xi_i-\xi_{i-1})^2 \right) \\
&= \frac{p-1}{2c_1} \frac{\sigma_{i-1}}{s}\left( 1 - (\xi_i - \xi_{i-1}) + O ((\xi_i-\xi_{i-1})^2 \right). 
\end{align*}
Hence, from the differential identities \eqref{eq:tl} and \eqref{est:zeta} satisfied by $\bar \zeta_i$ and $\zeta_i$, the equation on $\xi_i$ writes (recall from \eqref{defsi} that $\sigma_0 = \sigma_k =0$)
\begin{align*}
\dot \xi_i & = \frac 2{p-1}(\dot \zeta_i -\dot{\bar {\zeta_i}}) \\
& = - \frac{\sigma_{i-1}}{s}  \left( \xi_i - \xi_{i-1} + O(\xi_i-\xi_{i-1})^2 \right) + \frac{\sigma_{i}}{s} (\xi_i - \xi_{i-1} + O(\xi_i-\xi_{i-1})^2) \\
& \qquad + O(\| q \|_{\q H}^2 + (J +\| q \|_{\q H}) \bar J + J^{1+\delta}) \\
& = \frac{1}{s} \left( \sigma_{i-1} \xi_{i-1} - (\sigma_{i-1} + \sigma_i) \xi_i + \sigma_i \xi_{i+1} + O \left( \sum_{i=2}^k |\xi_i - \xi_{i-1}|^2 \right) \right) \\
& \qquad + O(\| q \|_{\q H}^2 + (J +\| q \|_{\q H}) \bar J + J^{1+\delta}).
\end{align*}
Hence we conclude from the definition \eqref{defM} of the matrix $M$
\[ 
\dot{\boldsymbol \xi} = \frac{1}{s} M\boldsymbol \xi + O \left( \frac{\sum_{i=2}^k |\xi_i - \xi_{i-1}|^2}{s} \right) + O( \| q \|_{\q H}^2 + (J +\| q \|_{\q H}) \bar J + J^{1+\delta} ). 
\]
Note that this differential inequality is already more accurate than \eqref{eqlin1} which was obtained by a rough Taylor expansion of \eqref{est:zeta}.\\
Now if we denote $\boldsymbol e_i = {}^t(e_{i,1}, \dots, e_{i,k})$ the eigenvector of $M$ defined in Lemma \ref{eigenm} (recall that $e_1 = {}^t(1, \dots ,1)$), we see from the definition \eqref{defphi} of $\phi(s)$ that
\[ \xi_i - \xi_{i-1} = \sum_{j=1}^k \phi_j (e_{j,i} - e_{j,i-1}) = \sum_{j=2}^k \phi_j (e_{j,i} - e_{j,i-1}), \]
and from this,  we deduce:
\[ \sum_{i=2}^k |\xi_i - \xi_{i-1}|^2 = O(\tilde J)
 \]
where $\tilde J$ is defined in \eqref{deftj}.
Hence, when projecting the last relation of $\dot {\boldsymbol \xi}$ on the $\boldsymbol e_i$, we obtain the desired relation. This concludes the proof of Corollary \ref{cordynphi}. 
\end{proof}

With 
 Proposition \ref{propdyn} and Corollary \ref{cordynphi} in our hands, we are in a position to prove the following, which directly implies Proposition \ref{propw}:
\begin{prop}[A solution $w(y,s)\in \q V(s_0,s)$]\label{reducw} For $s_0$ large enough, there exists $(\nu_{i,0})_i\in\prod_{i=1}^k[-s_0^{-\frac 12 - |\gamma_i|},s_0^{-\frac 12 - |\gamma_i|}]$ such that equation \eqref{eq:nlw_w} with initial data (at $s=s_0$) given by \eqref{w0} is defined for all $(y,s)\in(-1,1)\times [s_0, \infty)$ and satisfies $w (s) \in \q V(s_0,s)$ for all $s \ge s_0$.
\end{prop}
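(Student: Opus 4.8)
The plan is to run a shooting argument in the $k$ parameters $(\nu_{i,0})_i$ of the initial data \eqref{w0}, matched against the $k$ expanding directions of the linearized flow (the $\nu_i$'s), and to close it by a topological no–retraction argument, as in the references quoted in the introduction. First note that Proposition~\ref{lemode0} applied at $s=s_0$ with $\hat q=0$ produces no modulation, so at $s=s_0$ we have $d_i(s_0)=\bar d_i(s_0)$ and $\nu_i(s_0)=\nu_{i,0}$, hence $q(s_0)=0$, $\xi_i(s_0)=0$ and $\phi_i(s_0)=0$ for all $i$. In particular, when $(\nu_{i,0})_i$ is interior to the box $\mathcal B:=\prod_{i=1}^k[-s_0^{-1/2-|\gamma_i|},s_0^{-1/2-|\gamma_i|}]$, the solution starts strictly inside $\q V(s_0,s_0)$, so the exit time satisfies $s^*(s_0,(\nu_{i,0})_i)>s_0$. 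Since the $\q V$–bounds in \eqref{defV} are strictly stronger than the modulation conditions \eqref{conmod} once $s_0$ is large, as long as $w\in\q V$ the decomposition \eqref{defq} persists and $w$ does not blow up, so $\bar s=\infty$ and $s^*$ is finite only if one of the inequalities in \eqref{defV} saturates at $s=s^*$.

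The crux is \textbf{Step 1}: to show that, as long as $w(s)\in\q V(s_0,s)$, the component $q$ and all the $\phi_i$ stay strictly inside $\q V$, so that an exit can only occur through a constraint $(s^*)^{1/2+|\gamma_{i_0}|}|\nu_{i_0}(s^*)|=1$. Working on $[s_0,s^*)$, the $\q V$–bounds together with \eqref{defxi1}, \eqref{solpart} and \eqref{defalphai} give $e^{-\frac2{p-1}(\zeta_i-\zeta_{i-1})}\asymp 1/s$ and $1-d_i^2\asymp s^{-|\gamma_i|}$, whence $J\lesssim 1/s$, $\hat J\lesssim s^{-\bar p/2}$, $\bar J\lesssim s^{-1/2}$ and $\tilde J\lesssim s^{-2\eta}$. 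Since $q(s_0)=0$, estimate \eqref{est:q} upgrades the control of $q$ to $\|q(s)\|_{\q H}^2\lesssim \hat J(s)^2\lesssim s^{-\bar p}$, so $s^{1/2+\eta}\|q(s)\|_{\q H}\lesssim s^{\eta-(\bar p/2-1/2)}\lesssim s^{-3\eta}$ by \eqref{defeta}, which stays $\le 1/2$ for $s_0$ large. Feeding all these estimates — crucially this improved control of $\|q\|_{\q H}$ — into Corollary~\ref{cordynphi}, and using \eqref{defeta}, its right–hand side is $O(s^{-1-2\eta})$; it is essential here that Corollary~\ref{cordynphi} only involves $\tilde J=\sum_{j\ge2}\phi_j^2$ and not $\sum_{j\ge1}\phi_j^2$ (see the Remark following it), since the neutral mode $\phi_1$ does not decay and its square, divided by $s$, would otherwise be too large to absorb. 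Hence $\dot\phi_i=-\frac{m_i}{s}\phi_i+O(s^{-1-2\eta})$, and since by Lemma~\ref{eigenm} $m_1=0$ while $m_i\ge1$ for $i\ge2$, integrating from $\phi_i(s_0)=0$ gives $|\phi_1(s)|\lesssim s_0^{-2\eta}$ and $|\phi_i(s)|\lesssim s^{-2\eta}$ for $i\ge2$, so that $s_0^\eta|\phi_1|$ and $s^\eta|\phi_i|$ stay $\le 1/2$ for $s_0$ large. (As a by–product $\phi_1(s)$ converges as $s\to\infty$; its limit, suitably rescaled, provides the $\zeta_0$ of Proposition~\ref{propw}.) I expect this step to be the main obstacle: it rests on two delicate points that must be set up carefully — the gain in \eqref{est:q} coming from $q(s_0)=0$ and from the smallness of $\hat J$, and the sharp form of Corollary~\ref{cordynphi} preventing the non–decaying mode $\phi_1$ from polluting the decaying ones.

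It remains to run the topological argument on the $\nu_i$'s. From \eqref{est:nu}, $\dot\nu_i=\nu_i+r_i$ with $|r_i|\le (1-d_i^2)\,C(\|q\|_{\q H}^2+J+\|q\|_{\q H}\bar J)\lesssim s^{-1-|\gamma_i|}$ on $[s_0,s^*)$; hence, with $g_i(s)=s^{1+2|\gamma_i|}\nu_i(s)^2$, one computes $\dot g_i=2g_i+\frac{1+2|\gamma_i|}{s}g_i+O(s^{-1/2}\sqrt{g_i})$, so that $g_i(s)=1$ forces $\dot g_i(s)\ge 2-o(1)>0$ for $s_0$ large: each $\nu_i$–face of $\partial\q V$ is crossed transversally outward. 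Consequently $s^*$ is continuous wherever finite, and for $(\nu_{i,0})_i\in\partial\mathcal B$ one has $s^*=s_0$ (some $g_{i_0}(s_0)=1$ with $\dot g_{i_0}(s_0)>0$). Now argue by contradiction: if $s^*(s_0,(\nu_{i,0})_i)<\infty$ for every $(\nu_{i,0})_i\in\mathcal B$, then by Step~1 the map
\[
\Phi:(\nu_{i,0})_i\in\mathcal B\;\longmapsto\;\bigl((s^*)^{1/2+|\gamma_i|}\,\nu_i(s^*)\bigr)_{i=1,\dots,k}\in\partial([-1,1]^k)
\]
is well defined and continuous on all of $\mathcal B$; on $\partial\mathcal B$, since $s^*=s_0$ and $\nu_i(s_0)=\nu_{i,0}$, it equals the linear rescaling $(\nu_{i,0})_i\mapsto(s_0^{1/2+|\gamma_i|}\nu_{i,0})_i$, a homeomorphism of $\mathcal B$ onto $[-1,1]^k$ taking $\partial\mathcal B$ onto $\partial([-1,1]^k)$. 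Composing $\Phi$ with its inverse then yields a continuous retraction of the cube $[-1,1]^k$ onto its boundary sphere $S^{k-1}$, which is impossible. Hence some $(\nu_{i,0})_i\in\mathcal B$ gives $s^*=+\infty$, i.e. $w(s)\in\q V(s_0,s)$ for all $s\ge s_0$; and since $w$ then stays, in $\q H$, a bounded superposition of generalized solitons plus a remainder tending to $0$, it is in particular global on $(-1,1)\times[s_0,\infty)$. This proves Proposition~\ref{reducw}.
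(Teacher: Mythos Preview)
Your proof is correct and follows essentially the same approach as the paper: you identify $q(s_0)=0$ and $\phi_i(s_0)=0$ from uniqueness of the modulation, use the $\q V$--bounds to get $J\lesssim 1/s$, $\hat J\lesssim s^{-\bar p/2}$, $\bar J\lesssim s^{-1/2}$, $\tilde J\lesssim s^{-2\eta}$, bootstrap $q$ via \eqref{est:q} and the $\phi_i$ via Corollary~\ref{cordynphi} to show exit can only occur through a $\nu_i$--face, verify outward transversality of the $\nu_i$--flow at that face, and close with the no--retraction argument. The paper organizes this into a Claim with three parts (exit on the sphere, transversality, boundary data exit immediately) and phrases the topological step via index theory, but the content and order of the estimates are the same as yours.
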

\begin{proof}[Proof of Proposition \ref{reducw}]
In fact, we started the proof of this proposition right after the statement of Proposition \ref{propw}. For the sake of clarity, we summarize here all the previous arguments, and conclude the proof thanks to a topological argument.

Let $s_0$ be large enough.
Define $\m B$ (resp. $\m S$) the unit ball (resp. sphere) in $(\m R^k, \ell^\infty)$, and the rescaling function
\begin{equation}\label{rescl}
\Gamma_s: \boldsymbol \nu = {}^t (\nu_1, \dots, \nu_k) \mapsto {}^t ( s^{-1/2-|\gamma_1|} \nu_1, \dots,  s^{-1/2-|\gamma_k|} \nu_k), 
\end{equation}
For all $\boldsymbol \nu \in \m B$, we consider the solution $w(s_0,\boldsymbol \nu,y,s)$ (or $w(y,s)$ for short) to the equation \eqref{eq:nlw_w}, with initial condition at time $s_0$ given by \eqref{w0} with 
\[
(\nu_{i,0})_i = \Gamma_{s_0}(\boldsymbol \nu)_i.
\]
As we showed after the statement of Proposition \ref{lemode0}, $w(y,s)$ can be modulated (up to some time $\bar s = \bar s(s_0,\boldsymbol \nu)>s_0$) into a triplet $(q(s), (d_i(s))_i,(\nu_i(s))_i)$. From the uniqueness of such a decomposition (which is a consequence of the application of the implicit function theorem, see the proof of \cite[Proposition 2.1]{MZisol10}), we get
\begin{equation}\label{initmod}
q(s_0) =0, \quad d_i(s_0)=\bar d_i(s_0)\quad \text{and} \quad
\nu_i(s_0) = \Gamma_{s_0}(\boldsymbol \nu)_i. 
\end{equation}
Performing the change of variables \eqref{defxi1} and \eqref{defphi}, we reduce the control of $w(s)$ to the control of $(q(s), (\nu_i(s))_i, (\phi_i(s))_i)$ and we see from \eqref{initmod} that
\begin{equation}\label{initmod2}
\forall i=1,\dots,k, \quad \phi_i(s_0)=0.
\end{equation}
Introduce
\begin{equation}\label{defN}
N(\boldsymbol \nu, s) := \max \left\{ s^{1/2+\eta} \| q(s) \|_{\q H} ,\ \sup_{i} s^{1/2+|\gamma_i|} |\nu_i(s)|,\ \sup_{i \ge 2} s^\eta |\phi_i(s)|,\ s_0^\eta |\phi_1(s)| \right\},
\end{equation}
we see that $\q V(s_0,s)$ (Definition \ref{defVa}) is simply the unit ball of the norm $N(\boldsymbol \nu, s)$.

As asserted in \eqref{aim}, we aim to find $\boldsymbol \nu$ so that the associated $w \in \q C([s_0,\infty), \q H)$ is globally defined for forward times and 
\[ \forall s \ge s_0, \quad N(\boldsymbol \nu,s) \le 1, \quad \text{i.e.} \quad w(s) \in \q V(s_0,s). \] 
We argue by contradiction.  Assume that the conclusion of Proposition \ref{reducw} does not hold. 
In particular, for all $\boldsymbol \nu$, the exit time $s^*(s_0,\boldsymbol \nu)$ is finite, where 
\begin{equation} \label{exit}
s^*(s_0,\boldsymbol \nu)= \sup \{ s \ge s_0 \;|\; \forall \tau \in [s_0,s], \ N(\boldsymbol \nu,\tau) \le 1 \}.
\end{equation}
Then by continuity, notice that 
\begin{equation} \label{Nbord}
N(\boldsymbol \nu, s^*(s_0,\boldsymbol \nu)) =1,
\end{equation}
and that the supremum defining $s^*(s_0,\boldsymbol \nu)$ is in fact a maximum.\\
We now consider the (rescaled) flow for the $\nu_i$, that is
\begin{equation}\label{defPhi}
\Phi: (s,\boldsymbol \nu) \mapsto \Gamma_s^{-1}({}^t(\nu_1(s), \dots , \nu_k(s))). 
\end{equation}
By the properties of the flow, $\Phi$ is a continuous function of $(s,\boldsymbol \nu) \in [s_0, s^*(s_0,\boldsymbol \nu)] \times \m B$.
By definition of the exit time $s^*(s_0,\boldsymbol \nu)$, we have that for all $s \in [s_0, s^*(s_0,\boldsymbol \nu)]$, $\Phi(s, \boldsymbol \nu) \in \m B$.
The following claim allows us to conclude:

\medskip

\noindent\emph{Claim.}
For $s_0$ large enough, we have:\\ 
(i) For all $\boldsymbol \nu\in \m B$, $\Phi(s^*(s_0,\boldsymbol \nu), \boldsymbol \nu) \in \m S$.\\
(ii) The flow $s \mapsto \Phi(s,\boldsymbol \nu)$ is transverse (outgoing) at $s=s^*(s_0, \boldsymbol \nu)$ (when it hits $\m S$).\\ 
(iii) If $\boldsymbol \nu\in \m S$, then $s^*(s_0,\boldsymbol \nu)=s_0$ and $\Phi(s^*(s_0,\boldsymbol \nu), \boldsymbol \nu)=\boldsymbol \nu$.


\begin{proof}[Proof of the Claim] In the following, the constant $C$ stands for $C(s_0)$.\\
(i) Since for all $s\in [s_0,s^*(s_0,\boldsymbol \nu)]$, $N(s_0,s)\le 1$, it follows that $|\boldsymbol \phi(s)|\le C$, hence from  the change of variables \eqref{defxi1} and \eqref{defphi} together with the definition \eqref{solpart} of $\bar \zeta_i(s)$, we see that 
\[ 
|\xi_i(s)| =\frac 2{p-1}|\zeta_i(s) -\bar \zeta_i(s)|\le C
 \text{ so that } |\zeta_i(s) - \zeta_{i-1}(s) - \frac{p-1}{2} \log s| \le C. 
\]
This in turns implies that 
$1/(Cs^{|\gamma_i|}) \le 1 -d_i^2 \le C/s^{|\gamma_i|}$, except for $i = (k+1)/2$ if $k$ is odd, where $1-d_i(s)^2\ge \frac 1C$.
This leads also to the bounds
\[ J \le \frac{C}{s}, \quad \bar J \le \frac{C}{s^{1/2}}, \quad \hat J \le \frac{C}{s^{\bar p/2}}, \quad \tilde J \le \frac{C}{s^{2\eta}}, 
\]
where the different quantities are defined in \eqref{defJ} and \eqref{deftj}.\\
Hence, the estimates \eqref{est:q}, \eqref{initmod}, \eqref{est:nu} and \eqref{est:phi} read as follows: for all $s \in [s_0, s^*(s_0,\boldsymbol \nu)]$
\begin{align}
\| q (s) \|_{\q H} & \le \frac{C}{s^{\bar p/2}}\le \frac{1}{2 s^{1/2+ \eta}}, \text{ and from this} \label{est:q2} \\
| \dot \nu_i - \nu_i | & \le C \left( \frac{1}{s^{|\gamma_i| + \bar p}} + \frac{1}{s^{|\gamma_i| + 1}} + \frac{1}{s^{1/2+ |\gamma_i|+ \bar p/2}}  \right) \le \frac{C}{s^{|\gamma_i| + 1}} \label{est:nu2} \\
\left| \dot \phi_i + \frac{m_i}s \phi_i \right| & \le C \left( \frac{1}{s^{1+2\eta}} + \frac{1}{s^{\bar p}} +\frac{1}{s^{3/2}}+\frac 1{s^{(\bar p+1)/2}}+ \frac{1}{s^{1+\delta}} \right) \le \frac{C}{s^{1+2\eta}}, \label{est:phi2}
\end{align}
provided that $s_0$ is large enough,
(we used the definition \eqref{defeta} of $\eta$ in the first and last line above).
Now, if $i =2, \dots, k$, recall from Lemma \ref{eigenm} and the definition \eqref{defeta} of $\eta$  that $0<2\eta<m_i$. 
Considering $g_i(s) = s^{m_i} \phi_i(s)$, we see that $|\dot g_i(s)| \le C s^{m_i-(1+2\eta)}$. Since $\phi_i(s_0)=0$ by \eqref{initmod2}, we write 
\begin{equation} \label{phibound}
| \phi_i(s)| \le \left(\frac {s_0}s\right)^{m_i}|\phi_i(s_0)| + \frac{C}{s^{2\eta}} = \frac{C}{s^{2\eta}} \le \frac{1}{2 s^\eta}
\end{equation}
for $s_0$ large enough. For $\phi_1$, directly integrating the relation \eqref{est:phi2} and using the fact that $\phi_1(s_0)=0$ (see \eqref{initmod2}) gives, for $s_0$ large enough,
\begin{equation} \label{phi1bound}
| \phi_1(s)| \le |\phi_1(s_0)| + \frac{C}{s_0^{2\eta}} =\frac{C}{s_0^{2\eta}}\le \frac{1}{2s_0^\eta}.
\end{equation} 
Since $N(\boldsymbol \nu, s^*(s_0, \boldsymbol \nu))=1$ by \eqref{Nbord}, we see from the definition \eqref{defN} of $N$ together with \eqref{est:q2}, \eqref{phibound} and \eqref{phi1bound} that necessarily there exists $i=1,\dots,k$ such that
\begin{equation*}
s^*(s_0, \boldsymbol \nu)^{1/2+|\gamma_i|}|\nu_i(s^*(s_0, \boldsymbol \nu))|=1.
\end{equation*}
Using the definitions \eqref{defPhi} and \eqref{rescl} of the flow $\Phi$ and the rescaling function $\Gamma_s$, we get to the conclusion of part (i). 

\medskip
 
\noindent (ii) Assume that $\Phi(s,\boldsymbol \nu) \in \m S$ for some $s\in [s_0,s^*(s_0, \boldsymbol \nu)]$. Therefore, there exists $i=1,\dots,k$ such that
\begin{equation}\label{ibord}
s^{1/2+|\gamma_i|}|\nu_i(s)|=1.
\end{equation}
Using \eqref{est:nu2}, we write 
\begin{align*}
\MoveEqLeft 
\frac{d}{ds} s^{1/2+|\gamma_i|} \nu_i(s) 
 = {s}^{1/2 + |\gamma_i|} \left( \left( \frac{1}{2} + | \gamma_i| \right) \frac{\nu_i(s)}{s} + \dot \nu_i(s) \right) \\
& =  {s}^{1/2 + |\gamma_i|} \left(  \nu_i(s) \left( 1 + \frac{1}{2s} + \frac{|\gamma_i|}{s} \right) + O \left( \frac{1}{{s}^{1+|\gamma_i|}} \right) \right) \\
& =  {s}^{1/2 + |\gamma_i|} \left(  \nu_i(s) + O \left( \frac{1}{{s}^{1+|\gamma_i|}} \right) \right).
\end{align*}
Using \eqref{ibord}, we deduce that for $s_0$ large enough, 
\begin{equation*} 
\frac{d}{ds} s^{1/2+|\gamma_i|} \nu_i(s)\cdot \frac{1}{ {s}^{1/2 +|\gamma_i|} \nu_i(s) } \ge \frac{1}{2}.
\end{equation*}
The same computation holds for any $j$ such that $\nu_j(s^*)$ reaches one extremity of the interval. Thus, the flow is transverse on $\m B$ and part (ii) holds.

\medskip

\noindent (iii) Let $\boldsymbol \nu \in \m S$. From \eqref{initmod} and the definition \eqref{defPhi} of the flow $\Phi$, we see that 
\begin{equation}\label{ini}
\Phi(s_0,\boldsymbol \nu) = \boldsymbol \nu.
\end{equation}
 Since $\boldsymbol \nu \in \m S$, we can use (ii) of the Claim and see that the flow $\Phi$ is transverse to $\m B$ at $s=s_0$. By definition of the exit time, we see that
\[
s^*(s_0, \boldsymbol \nu) = s_0.
\]
Using \eqref{ini}, we get to the conclusion of part (iii). 
\end{proof}
We now concludes the proof of Proposition \ref{reducw}. From  part (ii) of the claim, $\boldsymbol \nu \to s^*(s_0,\boldsymbol \nu)$ is continuous, hence from (i) and (iii),
\[
\boldsymbol \nu \mapsto \Phi(s^*(s_0,\boldsymbol \nu), \boldsymbol \nu)
\]
is a continuous map from $\m B$ to $\m S$ whose restriction to $\m S$ is the identity. By index theory, this is contradiction. Thus, there exists $\boldsymbol \nu \in \m B$ such that for all $s\ge s_0$, $N(s_0, \boldsymbol \nu)\le 1$, hence $w(s_0, \boldsymbol \nu, \cdot,s) \in \q V(s_0, s)$. This is the desired conclusion. 
\end{proof}

It remains to give the proof of Proposition \ref{propw} in order to conclude this section. Let us first recall from Lemma A.2 in \cite{MZisol10} the following continuity result for the family of solitons $\kappa^*(d,\nu)$:
\begin{lem}[Continuity of $\kappa^*$] \label{contk*}For all $A\ge 2$, there exists $C(A)>0$ such that if $(d_1,\nu_1)$ and $(d_2, \nu_2)$ satisfy 
\begin{equation}\label{condA}
\frac {\nu_1}{1-|d_1|},\frac {\nu_2}{1-{|d_2|}}\in [-1+\frac 1A, A],\;\;
\end{equation}
then
\begin{multline}\label{defze}
\|\kappa^*(d_1,\nu_1)-\kappa^*(d_2, \nu_2)\|_{\H} \\
\le C(A)\left(\left| \frac {\nu_1}{1-|d_1|} -\frac {\nu_2}{1-{|d_2|}}\right| +\left|\arg\tanh d_1 - \arg\tanh d_2\right|\right).
\end{multline}
\end{lem}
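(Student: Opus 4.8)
The plan is to show that $\kappa^*$ is Lipschitz from the admissible parameter set, endowed with the metric appearing on the right-hand side of \eqref{defze}, into $\H$. It is natural to use the coordinates $\mu=\frac{\nu}{1-|d|}\in[-1+\frac1A,A]$ and $\zeta=\arg\tanh d\in\m R$: with these, \eqref{defze} is precisely a Lipschitz bound with respect to $|\mu_1-\mu_2|+|\zeta_1-\zeta_2|$. Joining $(\mu_1,\zeta_1)$ to $(\mu_2,\zeta_2)$ by the broken path $(\mu_1,\zeta_1)\to(\mu_1,\zeta_2)\to(\mu_2,\zeta_2)$ and integrating in $\H$ along it, it suffices to prove the two uniform estimates
\[
\Big\|\partial_\zeta\big[\kappa^*({-}\tanh\zeta,\mu(1-|\tanh\zeta|))\big]\Big\|_{\H}\le C(A),\qquad
\Big\|\partial_\mu\big[\kappa^*(d,\mu(1-|d|))\big]\Big\|_{\H}\le C(A),
\]
for all $|d|<1$, $\zeta\in\m R$ and $\mu\in[-1+\frac1A,A]$ (here $\partial_\mu$ is taken at fixed $d$, and $\partial_\zeta$ at fixed $\mu$). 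Differentiation in $\H$ is legitimate since the formulas \eqref{defk*} are smooth in $(d,\nu)$ and the relevant integrals converge uniformly, as checked below.

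The observation that drives the proof is that, for admissible parameters,
\[
1+dy+\nu=(1+dy)\Big(1+\mu\,\tfrac{1-|d|}{1+dy}\Big)\ \ge\ \frac{1+dy}{A}\ \ge\ \frac{1-|d|}{A}\qquad\text{for all }y\in(-1,1),
\]
because $1+dy\ge 1-|d|>0$, $\tfrac{1-|d|}{1+dy}\in(0,1]$, and hence $1+\mu\tfrac{1-|d|}{1+dy}\in[\tfrac1A,1+A]$. Now differentiate \eqref{defk*}: at fixed $d$ one has $\partial_\mu=(1-|d|)\,\pnu$, while at fixed $\mu$, $\partial_\zeta$ is a combination of $(1-d^2)\,\partial_d$ and $(1-d^2)\,\pnu$ with coefficients $O(A)$; each application of $\partial_d$ or $\pnu$ to $\kappa_1^*$ or $\kappa_2^*$ produces a factor $(1+dy+\nu)^{-1}$ or $(1-d^2)^{-1}$ and at most a factor $(y-c)$ with $|c|\le A$. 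Using the displayed lower bound to replace $(1+dy+\nu)^{-1}$ by $\tfrac{A}{1+dy}$, and $|y-c|\le 1+A$, one gets pointwise bounds of the form
\[
\big|\partial_\zeta\kappa_j^*\big|,\ \big|\partial_\mu\kappa_j^*\big|\ \le\ C(A,p)\,|\kappa_1^*|,\qquad
\big|\py\partial_\zeta\kappa_1^*\big|,\ \big|\py\partial_\mu\kappa_1^*\big|\ \le\ C(A,p)\,\frac{1-|d|}{1+dy}\,|\kappa_1^*|,
\]
the point being that the chain-rule factors $1-|d|$ and $1-d^2$ exactly absorb the singular factors coming from differentiation. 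It then remains to bound $\int_{-1}^1|\kappa_1^*|^2\rho\,dy$, $\int_{-1}^1\big(\tfrac{1-|d|}{1+dy}\big)^2|\kappa_1^*|^2(1-y^2)\rho\,dy$ and $\int_{-1}^1|\kappa_2^*|^2\rho\,dy$ by $C(A,p)$, uniformly. Here one again uses $(1+dy+\nu)^{-1}\le\tfrac{A}{1+dy}$, so that each integrand is $C(A,p)$ times a monomial $(1-d^2)^{\alpha}(1+dy)^{-\beta}(1-y^2)^{\sigma}$, and then performs the change of variables $Y=\frac{y+d}{1+dy}$, under which $1+dy=\frac{1-d^2}{1-dY}$, $1-y^2=\frac{(1-d^2)(1-Y^2)}{(1-dY)^2}$ and $dy=\frac{1-d^2}{(1-dY)^2}\,dY$. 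A bookkeeping of exponents shows that all powers of $1-d^2$ cancel and all powers of $1-dY$ collapse to $(1-dY)^{-2}$, leaving $\int_{-1}^1(1-dY)^{-2}(1-Y^2)^{\sigma_0}\,dY$ with $\sigma_0\ge\frac2{p-1}$; this is bounded uniformly in $|d|<1$ (the apparent singularity at $Y=\pm1$ is killed by $(1-Y^2)^{\sigma_0}$, as one sees by the rescaling $1-|Y|=(1-|d|)u$). The same substitution, applied to $\kappa^*(d,\nu)$ itself, also yields the auxiliary uniform bound $\|\kappa^*(d,\nu)\|_{\H}\le C(A)$ used along the way.

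Feeding these estimates into the fundamental theorem of calculus along the broken path gives \eqref{defze} with a constant depending only on $A$ and $p$. The only genuinely laborious point is the second paragraph: one must write out the first and second $y$-derivatives of $\kappa_1^*$ and of $\kappa_2^*$, keep track of the powers of $1-d^2$, $1+dy+\nu$ and $y-c$, and verify that after the substitution $Y=\frac{y+d}{1+dy}$ the $d$-dependence disappears and a convergent $Y$-integral remains. There is no conceptual obstacle: the lemma is deliberately phrased in the coordinates $(\mu,\zeta)$ so that the chain-rule factors $1-|d|$ and $1-d^2$ neutralize the boundary blow-up of the profiles, and the argument is then a finite, routine computation.
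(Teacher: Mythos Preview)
The paper does not prove this lemma: it is recalled verbatim from \cite[Lemma~A.2]{MZisol10} and used as a black box. So there is no in-paper argument to compare against.

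Your direct Lipschitz argument in the coordinates $(\mu,\zeta)=(\tfrac{\nu}{1-|d|},\arg\tanh d)$ is the natural proof and is essentially correct. The key inequality $1+dy+\nu\ge(1+dy)/A$ is right, and the change of variables $Y=\frac{y+d}{1+dy}$ indeed reduces every integral to $\int_{-1}^1(1-dY)^{-2}(1-Y^2)^{\sigma_0}\,dY$ with $\sigma_0\ge\tfrac{2}{p-1}$, which is uniformly bounded in $|d|<1$ by the rescaling you indicate. One minor slip: the pointwise bound you write for $\py\partial_\zeta\kappa_1^*$ and $\py\partial_\mu\kappa_1^*$ should have the factor $\tfrac{1}{1+dy}$ (coming from $\partial_y\kappa_1^*=-\tfrac{2d}{(p-1)(1+dy+\nu)}\kappa_1^*$), not $\tfrac{1-|d|}{1+dy}$; fortunately this makes no difference, since after the substitution the extra weight $\tfrac{1-y^2}{(1+dy)^2}=\tfrac{1-Y^2}{1-d^2}$ still leaves a uniformly bounded $Y$-integral. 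Also, the use of $|d|$ makes $\mu\mapsto\nu$ only piecewise smooth at $d=0$; this is harmless (treat $d>0$ and $d<0$ separately and invoke continuity at $d=0$), but worth noting.
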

\begin{nb} Since $\kappa(d,y)=\kappa^*(d,0,y)$ (see \eqref{defkd} and \eqref{defk*}), this statement is a generalization of the continuity identity for the family $\kappa(d,y)$ given in \eqref{contkd}.
\end{nb}

\begin{proof}[Proof of Proposition \ref{propw}]
Let us consider the solution constructed in Proposition \ref{reducw}. Since $w (s) \in \q V(s_0,s)$ for all $s \ge s_0$, from Corollary \ref{cordynphi} and the definition \ref{defVa} of $\q V(s_0,s)$, we see that \eqref{est:phi2} holds. In particular, for $i=1$, we see that
\[
\forall s\ge s_0,\;\;|\phi_1'(s)|\le \frac C{s^{1+2\eta}}.
\]
Therefore, $\phi_1(s)$ converges to some $l_0\in\m R$ as $s\to \infty$. Since $\phi_i(s)\to 0$ for $i=2,\dots,k$, using \eqref{defphi} and the fact that $e_1= ^t(1,\dots,1)$ (see Lemma \ref{eigenm}), we see that $\xi_i(s) \to l_0$. From \eqref{defxi1}, we see that $\zeta_i(s) -\bar \zeta_i(s)\to\zeta_0\equiv\frac{(p-1)}2 l_0$ for all $i=1,...,k$ and \eqref{equid} follows. In particular, 
\[
1-|d_i(s)|\sim C_i s^{-|\gamma_i|}\mbox{ as }s\to \infty,
\]
hence, from the definition \ref{defVa} of $\q V(s_0,s)$, we have
\[
\forall s\ge s_0,\;\; \frac{|\nu_i|}{1-|d_i(s)|}\le C(s_0)s^{-\frac 12}.
\]
Therefore, Lemma \ref{contk*} applies and since $\kappa^*(d_i(s),0,y)=\kappa(d_i(s),y)$, we have
\[
\|\kappa^*(d_i(s),\nu_i(s))-(\kappa(d_i(s),0))\|_{\q H}\le C(s_0)\frac{|\nu_i|}{1-|d_i(s)|}\le C(s_0)s^{-\frac 12}.
\]
As $\|q(s)\|_{\q H}\le \frac C{s^{\frac 12 +\eta}}$ by definition \ref{defVa} of $\q V(s_0,s)$, 
and with \eqref{defq}, we deduce 
\[
\left\|\vc{w(s)}{\ps w(s)} - \vc{\ds\sum_{i=1}^{k} (-1)^{i+1}\kappa(d_i(s))}0\right\|_{\H} \le \|q(s)\|_{\q H} + C(s_0)s^{-\frac 12}\le C(s_0)s^{-\frac 12}
\]
and \eqref{cprofile} follows. This concludes the proof of Proposition \ref{propw}.
\end{proof}


\section{Multi-solitons solution in the $u(x,t)$ setting}\label{seclorentz}
In this section, we use the multi-soliton in similarity variables of Proposition \ref{propw} together with the Lorentz transform to prove Theorem \ref{mainth} and Corollary \ref{cormore}. 

\subsection{Prescribing only one characteristic point}

We prove Theorem \ref{mainth} here. We proceed in 2 parts:\\
- in Part 1, we translate the construction of the previous section into the $u(x,t)$ setting, and recover a solution to our purpose, without the possibility of prescribing the center of mass. This part contains straightforward and obvious arguments which may be skipped by specialists. We give them for the reader's convenience;\\
- in Part 2, we apply the Lorentz transform to the solution constructed in Part 1, making the center of mass of the solitons equal to any prescribed value.

\bigskip

\begin{proof}[Proof of Theorem \ref{mainth}]$ $\\
{\bf Part 1: A multi-soliton solution in the $u(x,t)$ without prescribing the center of mass}

This part has straightforward arguments. It may be skipped by specialists.\\
Consider an integer $k\ge 2$ and consider $w(y,s)$ the solution of \eqref{eq:nlw_w} constructed in Proposition \ref{propw}.\\
Then, let us define $u(x,t)$ as the solution of equation \eqref{eq:nlw_u} with initial data in $\rm H^1_{\rm loc,u}\times \rm L^2_{\rm loc,u}(\m R)$ whose trace in $(-1,1)$ is given by
\begin{equation}\label{u0u1}
u(x,0)=w(x,s_0)\mbox{ and }\partial_t u(x,0)=\partial_s w(x,s_0)+\frac 2{p-1}w(x,s_0)+x\partial_y w(x,s_0).
\end{equation}
We will see that $u(x,t)$ satisfies all the requirements in Theorem \ref{mainth}, except for subscribing the center of mass. More precisely, using the definition of similarity variables' transformation \eqref{def:w} in the other way, we translate the properties of $w(y,s)$ in the following properties of $u(x,t)$:

\medskip

(i) {\it For all $t\in[0,1)$ and $|x|<1-t$, 
\begin{equation}\label{uinside}
u(x,t)=(1-t)^{-\frac 2{p-1}}w\left(\frac x{1-t}, s_0-\log(1-t)\right).
\end{equation}
}
Indeed, by definition \eqref{def:w} of similarity variables, the function on the right-hand side of \eqref{uinside} is a solution to equation \eqref{eq:nlw_u} with the same initial data \eqref{u0u1} as $u(x,t)$. Since that initial data is in  $H^1\times L^2(-1,1)$ and equation \eqref{eq:nlw_u} is well-posed in $H^1\times L^2$ of sections of backward light cones, both solutions are equal from the uniqueness to the Cauchy problem and the finite speed of propagation, hence \eqref{uinside} holds.
In particular, from \eqref{def:w}, we have
\begin{equation}\label{egal}
\forall s\ge 0,\;\;\forall y\in(-1,1),\;\;
w_{0,1}(y,s)=w(y,s+s_0).
\end{equation}


(ii) {\it $u$ is a blow-up solution}. Indeed, if not, then $u$ is global and $u\in L^\infty_{loc}([0,\infty),\rm H^1_{loc,u}\times L^2_{loc,u}(\m R))$. In particular, we write from the Sobolev injection, for all $s\ge 0$ and $\epsilon>0$, 
\begin{equation}\label{tozero}
\|w_{0,1}(s)\|_{L^2_\rho}\le C \|u\|_{L^\infty(|x|<1+\epsilon-t)} e^{-\frac {2s}{p-1}}\to 0\mbox{ as }s\to\infty.
\end{equation}
This is in contradiction with \eqref{egal} and \eqref{cprofile}.

\medskip

(iii) {\it $T(0)=1$}. Indeed, from \eqref{egal} we see that $u(x,t)$ is defined in the cone $|x|<1-t$, $t\ge 0$, hence $T(0)\ge 1$. From \eqref{tozero}, we see that if $T(0)>1+\epsilon$ for some $\epsilon>0$, then the same contradiction follows. Thus $T(0)=1$.

\medskip

(iv) From above, we can use the simplified notation for \eqref{def:w} and write $w_0$ instead of $w_{0,1}$, and rewrite \eqref{egal} as follows:
\begin{equation*}
\forall s\ge 0,\;\;\forall y\in(-1,1),\;\;
w_0(y,s)=w(y,s+s_0).
\end{equation*}
Using \eqref{cprofile} and \eqref{equid}, 
we see that \eqref{cprofile0} follows for $w_0$ with
\[
\zeta_i(s) -\bar \zeta_i(s)\to\zeta_0\mbox{ as }s\to\infty\mbox{ for }i=1,...,k
\]
where $\zeta_0\in \m R$ and $(\bar \zeta_i(s))_i$ \eqref{solpart} is the explicit solution of system \eqref{eq:tl}.
Using the continuity result \eqref{contkd} for $\kappa(d,y)$, we see that \eqref{cprofile0} still holds if we slightly modify the $\zeta_i(s)$ by putting $\zeta_i(s)=\bar \zeta_i(s)+\zeta_0$ as required by \eqref{refequid1}.
Finally, from the classification of the blow-up behavior for general solutions given in page \pageref{classification}, we clearly see that the origin is a characteristic point.\\
Thus, we have a solution obeying all the requirements of Theorem \ref{mainth}, except that we cannot prescribe the center of mass $\zeta_0$ in \eqref{refequid1}.

\bigskip

{\bf Part 2: Prescribing the center of mass of the solitons}


Now, we take the solution constructed in Part 1 and perform a Lorentz transform to be able to prescribe the center of mass of the solitons. 

\medskip

More precisely, given an integer $k \ge 2$, Part 1 gives a blow-up solution $u^\sharp(x^\sharp, t^\sharp)$ of equation \eqref{eq:nlw_u} with $0$ as a characteristic point such that $T^\sharp(0)=1$ and 
\begin{equation}\label{cprofiled}
\left\|\vc{w_0^\sharp(s)}{\ps w_0^\sharp(s)} - \vc{\ds\sum_{i=1}^{k} (-1)^{i+1}\kappa(d_i^\sharp(s))}0\right\|_{\H} \to 0\mbox{ as }s\to \infty,
\end{equation}
where $w_0^\sharp$ is its similarity variables' version around $(0,T^\sharp(0))$ introduced in \eqref{def:w}, 
\begin{equation}\label{dis-location}
d_i^\sharp(s)=-\tanh \zeta_i^\sharp(s),\;\;\zeta^\sharp_i(s) =\bar \zeta_i(s) + \zeta_0^\sharp,
\end{equation}
 $(\bar \zeta_i(s))_i$ is the explicit solution of system \eqref{eq:tl} introduced in \eqref{solpart} and $\zeta_0^\sharp \in \m R$. 
In particular, $u^\sharp$ is defined (at least) in the truncated cone $\q C_{0,1,1}\cap \{t^\sharp\ge 0\}$ defined in \eqref{nonchar}.

\medskip

Given an arbitrary $\zeta_0 \in \m R$, our goal now is to construct $u$ a blow-up solution of equation \eqref{eq:nlw_u} with $0$ as a characteristic point such that its similarity variables' version $w_0$ \eqref{def:w} has a profile decomposing into $k$ solitons as in the following:
\begin{equation}\label{cprofiles}
\left\|\vc{w_0(s)}{\ps w_0(s)} - \vc{\ds\sum_{i=1}^{k} (-1)^{i+1}\kappa(d_i(s))}0\right\|_{\H} \to 0\mbox{ as }s\to \infty,
\end{equation}
with 
\begin{equation}\label{di-location}
d_i(s)=-\tanh \zeta_i(s)\mbox{ and }\zeta_i(s) =\bar \zeta_i(s) + \zeta_0.
\end{equation}
Since this would imply by definitions \eqref{solpart} and \eqref{defalphai} of $\bar \zeta_i(s)$ and $\bar\alpha_i$ that
\[
\frac{\zeta_i(s)+\dots+\zeta_k(s)}k=\zeta_0,
\]
we call this part of the proof ``prescription of the center of mass''.

\medskip

For this, consider Lorentz transforms of $u^\sharp$: given $d \in (-1,1)$ we consider 
\[ 
u(d;x,t) = u^\sharp(x^\sharp,t^\sharp)\mbox{ with }x^\sharp= \frac{x-d(t-1)}{\sqrt{1-d^2}}\mbox{ and }t^\sharp= 1+\frac{t-1-dx}{\sqrt{1-d^2}}. 
\]
Note first that $u(d;x,t)$ is still a solution of equation \eqref{eq:nlw_u}. Note also that the cone $\q C_{0,1,1}$ is preserved by the Lorentz transform, and that the image of the truncated cone $\q C_{0,1,1}\cap \{t^\sharp\ge 0\}$ is included in the truncated cone  $\q C_{0,1,1}\cap \{t\ge 1-\sqrt{\frac{1-|d|}{1+|d|}}\}$.\\
%
%
%
Now, in self-similar variables, the Lorentz transform reads as follows:
\begin{equation}\label{wlorentz}
w_0(d;y,s) : = {\q T}_d w_0^\sharp (y,s) = \frac{(1-d^2)^{\frac 1{p-1}}}{(1+dy)^{\frac 2{p-1}}}w^\sharp(y^\sharp, s^\sharp),\;\;y^\sharp= \frac{y+d}{1+dy},\;\;
s^\sharp= s + \log \frac{\sqrt{1-d^2}}{1+dy}. 
\end{equation}
The following claim allows us to conclude:
\begin{claim}We have the following:\\
(i) It holds that
\[ 
\sup_{|y|<1} \left|(1-{y}^2)^{\frac 1{p-1}}\left(w_0(y,s) - \sum_{i=1}^k (-1)^{i+1} \kappa(d*d_i^\sharp(s), y) \right)\right| \to 0 \quad \text{as} \quad  s \to \infty,
\]
where $d_1 * d_2= \frac{d_1 + d_2}{1+ d_1 d_2}$.\\
(ii) There exists $t_d>1-\sqrt{\frac{1-|d|}{1+|d|}}$ such that $(u(t_d),\partial_tu(t_d))\in H^1\times L^2(|x|<1-t_d)$.
\end{claim}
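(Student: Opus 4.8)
The plan is to obtain (i) by transporting the soliton asymptotics \eqref{cprofiled} of $w_0^\sharp$ through the explicit Lorentz formula \eqref{wlorentz}, and (ii) by evaluating the Lorentz--transformed solution on a slice $t=t_d$ chosen just above the threshold $\tau_d := 1-\sqrt{\frac{1-|d|}{1+|d|}}$. Two elementary identities drive (i): $1-(y^\sharp)^2 = \frac{(1-d^2)(1-y^2)}{(1+dy)^2}$, and, for every $d'\in(-1,1)$, $\q T_d[\kappa(d',\cdot)] = \kappa(d*d',\cdot)$ --- i.e.\ boosts add rapidities, $\arg\tanh(d*d') = \arg\tanh d + \arg\tanh d'$ --- both checked directly from \eqref{defkd}, \eqref{defk*} and \eqref{wlorentz}. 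I would also first record that \eqref{cprofiled} self-improves to convergence in the weighted sup norm: $\epsilon(\sigma) := \sup_{|z|<1}\big|(1-z^2)^{1/(p-1)}(w_0^\sharp(z,\sigma) - \sum_i(-1)^{i+1}\kappa(d_i^\sharp(\sigma),z))\big| \to 0$ as $\sigma\to\infty$, either from \eqref{cprofiled} and the control of $\|(1-\cdot^2)^{1/(p-1)}r\|_{L^\infty}$ by $\|r\|_{\H_0}$, or from the uniform weighted-$L^\infty$ bounds underlying the Merle--Zaag description.

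For (i), write from \eqref{wlorentz} that $w_0(y,s) = \frac{(1-d^2)^{1/(p-1)}}{(1+dy)^{2/(p-1)}}w_0^\sharp(y^\sharp,s^\sharp)$, with $y\mapsto y^\sharp = \frac{y+d}{1+dy}$ a bijection of $(-1,1)$ and $s^\sharp = s + \log\frac{\sqrt{1-d^2}}{1+dy} = s + O_d(1)$ uniformly in $y$. Using $\q T_d[\kappa(d',\cdot)] = \kappa(d*d',\cdot)$, the quantity $w_0(y,s) - \sum_i(-1)^{i+1}\kappa(d*d_i^\sharp(s),y)$ splits as $\q T_d$ applied to $w_0^\sharp(\cdot,s^\sharp) - \sum_i(-1)^{i+1}\kappa(d_i^\sharp(s^\sharp),\cdot)$, plus the soliton mismatch $\sum_i(-1)^{i+1}(\kappa(d*d_i^\sharp(s^\sharp),\cdot) - \kappa(d*d_i^\sharp(s),\cdot))$. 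After multiplying by $(1-y^2)^{1/(p-1)}$, the first piece equals $(1-(y^\sharp)^2)^{1/(p-1)}(w_0^\sharp(y^\sharp,s^\sharp) - \sum_i(-1)^{i+1}\kappa(d_i^\sharp(s^\sharp),y^\sharp))$, of modulus $\le \sup_{\sigma\ge s - C_d}\epsilon(\sigma)\to 0$; the second is controlled by the uniform estimate $\big\|(1-\cdot^2)^{1/(p-1)}(\kappa(d_1,\cdot)-\kappa(d_2,\cdot))\big\|_{L^\infty}\le C\,|\arg\tanh d_1 - \arg\tanh d_2|$ (differentiate $(1-y^2)^{1/(p-1)}\kappa(d,y) = \kappa_0\big(\frac{(1-y^2)(1-d^2)}{(1+dy)^2}\big)^{1/(p-1)}$ in the rapidity), which, together with $\arg\tanh(d*d_i^\sharp(\sigma)) = \arg\tanh d - \bar\zeta_i(\sigma) - \zeta_0^\sharp$ and $\bar\zeta_i(s^\sharp) - \bar\zeta_i(s) = (i-\frac{k+1}{2})\frac{p-1}{2}\log\frac{s^\sharp}{s} = O(1/s)$, makes it $O(1/s)$ uniformly in $y$. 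Part (i) follows.

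For (ii), fix $t_d\in(\tau_d,1)$. From $t^\sharp = 1 + \frac{t_d - 1 - dx}{\sqrt{1-d^2}}$, as $x$ runs over $(-(1-t_d),1-t_d)$ the value $t^\sharp$ ranges over an interval whose closure $\big[\,1-(1-t_d)\sqrt{\frac{1+|d|}{1-|d|}},\ 1-(1-t_d)\sqrt{\frac{1-|d|}{1+|d|}}\,\big]$ lies in $(0,1)$ precisely because $t_d > \tau_d$ (and $t_d<1$); since the Lorentz transform preserves $\q C_{0,1,1}$, the image of the slice $\{t=t_d,\ |x|<1-t_d\}$ is then a spacelike segment $\Sigma$ in the open backward light cone of vertex $(0,1)$, with $\overline\Sigma$ away from that vertex. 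Now $u^\sharp$, by the Cauchy theory in light cones used to construct it, is continuous on $[0,1)$ with values in the energy space $H^1\times L^2$ of the sections of that cone; hence the energy/flux identity for \eqref{eq:nlw_u} on the spacetime slab between $\Sigma$ and the fixed-time section just below it (favorable sign of the flux through the null part of the boundary) gives that $(u^\sharp,\partial_{t^\sharp}u^\sharp,\partial_{x^\sharp}u^\sharp)$ restricts to $\Sigma$ in $L^2(\Sigma)$. Since $u(d;x,t) = u^\sharp(x^\sharp,t^\sharp)$ with $(x,t)\mapsto(x^\sharp,t^\sharp)$ affine of constant Jacobian, the chain rule and the affine change of variable $x\mapsto x^\sharp$ along the slice then yield $(u(d;\cdot,t_d),\partial_t u(d;\cdot,t_d))\in H^1\times L^2(|x|<1-t_d)$.

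The genuinely delicate point is (i): every soliton $\kappa(d_i^\sharp(s),\cdot)$ degenerates ($d_i^\sharp(s)\to\pm1$) as $s\to\infty$, so one must ensure that the $y$-dependent time shift $s\mapsto s^\sharp$ introduced by the boost costs only a vanishing error. This works because the weighted sup norm $\|(1-\cdot^2)^{1/(p-1)}\cdot\|_{L^\infty}$ is \emph{exactly} invariant under $\q T_d$ (so the error term transfers verbatim), and because in the rapidity a boost is a translation while $\bar\zeta_i$ is only logarithmic in $s$, so passing from $s$ to $s^\sharp = s + O_d(1)$ costs only $O(1/s)$ in $\arg\tanh(d*d_i^\sharp)$. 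Secondary technical points are the $\H$-to-weighted-sup self-improvement of \eqref{cprofiled}, and, for (ii), the classical energy/flux bound on a spacelike slab together with the routine affine change of variables.
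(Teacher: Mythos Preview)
Your argument for (i) is essentially the paper's: pass from the $\H$ convergence \eqref{cprofiled} to weighted $L^\infty$ via the Hardy--Sobolev inequality \eqref{hs}, use the group law $\q T_d\kappa(d')=\kappa(d*d')$ together with the weight identity $1-(y^\sharp)^2=(1-d^2)(1-y^2)/(1+dy)^2$ (so the weighted sup norm is invariant under $\q T_d$), and absorb the $O_d(1)$ time shift $s\mapsto s^\sharp$ in the soliton parameters through the rapidity Lipschitz bound \eqref{contkd}, paying only $O(1/s)$.

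For (ii) you take a different route and there is a gap. You invoke the energy/flux identity on the slab between $\Sigma$ and a horizontal section and claim a ``favorable sign of the flux through the null part of the boundary''. But equation \eqref{eq:nlw_u} is \emph{focusing}: the conserved density is $\tfrac12(u_t^2+u_x^2)-\tfrac{1}{p+1}|u|^{p+1}$, and the outgoing flux through the null piece $\{x^\sharp=\pm(1-t^\sharp)\}$ equals $\tfrac12(u_t\mp u_x)^2-\tfrac{1}{p+1}|u|^{p+1}$, which has no sign. Likewise the ``energy'' on $\Sigma$ is not coercive. So the identity by itself does not yield $(u^\sharp,\partial_{t^\sharp}u^\sharp,\partial_{x^\sharp}u^\sharp)\in L^2(\Sigma)$. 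In one space dimension this can be repaired---since $u^\sharp\in C([0,1),H^1)$ and $H^1\hookrightarrow L^\infty$, $u^\sharp$ is uniformly bounded on the slab (whose $t^\sharp$-range is compact in $[0,1)$), hence all $|u^\sharp|^{p+1}$ contributions on $\Sigma$, on the null pieces and on the bottom are finite and can be moved to the right-hand side---but you did not supply this step, and the sign claim as stated is false.

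The paper sidesteps the issue altogether. It bounds the spacetime integral
\[
I_{d,s_1,s_2}=\int_{s_1}^{s_2}\!\int_{-1}^1\!\big(w_0^2+(\partial_y w_0)^2+(\partial_s w_0)^2\big)\,dy\,ds
\]
by changing variables through \eqref{wlorentz} and then \eqref{def:w} to a spacetime $L^2$ norm of $(u^\sharp,\partial_{x^\sharp}u^\sharp,\partial_{t^\sharp}u^\sharp)$ over a compact $t^\sharp$-interval inside the cone, which is finite simply because $u^\sharp\in C([0,1),H^1\times L^2)$ on sections; the mean-value theorem then produces a single $s_d$ (hence $t_d=1-e^{-s_d}$) at which the slice integral is finite. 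This avoids any flux-sign discussion and matches the existential form of the claim.
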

Indeed, let us define from $u$ a solution $\hat u$ satisfying the requirements of Theorem \ref{mainth}. From translation invariance of equation \eqref{eq:nlw_u}, we may take the time origin at $t=t_d$.\\
Take 
\begin{equation}\label{defd}
d = \tanh(\zeta_0^\sharp - \zeta_0).
\end{equation}
From (ii), we can consider $\hat u(x,t)$ the solution of equation \eqref{eq:nlw_u} with initial data (at $t=t_d$) in $\rm H^1_{\rm loc,u}\times \rm L^2_{\rm loc,u}(\m R)$ whose trace in $(-1,1)$ is given by
\[
\hat u(x,t_d) = u(d;x,t_d)\mbox{ and }\partial_t\hat u(x,t_d) = \partial_t u(d;x,t_d).
\]
Following Part 1, we see that $\hat u$ is a blow-up solution, $\hat T(0)=1$ and
\[
\forall t\in [t_d,1)\mbox{ and }|x|<1-t,\;\;\hat u(x,t) = u(d;x,t).
\]
In particular, 
\[
\forall s\ge 0\mbox{ and }|y|<1,\;\;\hat w_0(y,s)=w_0(y,s)
\]
and (i) of the Claim provides us with an asymptotic expansion for $\hat w_0$ in $L^\infty$ with the weight $(1-y^2)^{\frac 1{p-1}}$ on the one hand. On the other hand, using the classification of all blow-up solutions of equation \eqref{eq:nlw_u} given in page \pageref{classification}, we see that
\begin{equation}\label{cprofileb}
\left\|\vc{\hat w_0(s)}{\ps \hat w_0(s)} - \hat \theta_1\vc{\ds\sum_{i=1}^{\hat k} (-1)^{i+1}\kappa(\hat d_i(s))}0\right\|_{\H} \to 0\mbox{ as }s\to \infty,
\end{equation}
with $\hat \theta_1=\pm 1$ and:\\
- either $0$ is a non-characteristic point, hence $\hat k=1$, $\hat d_1(s)\equiv \hat T'(0)$,\\
- or $0$ is a characteristic point, with $\hat k\ge 2$, and $\hat d_i(s)=-\tanh \hat \zeta_i(s)$ with
\[
\left|\hat \zeta_i(s)-\bar \zeta_i(s)\right|\le C
\]
for $s$ large enough (use the definition \eqref{solpart} of $\bar \zeta_i(s)$ to derive this from \eqref{equid00}). Recalling the following Hardy-Sobolev inequality from Lemma 2.2 page 51 in \cite{MZjfa07}:
\begin{equation}\label{hs}
\forall h\in \H_0,\;\; \|h(1-y^2)^{\frac 1{p-1}}\|_{L^\infty(-1,1)}\le  C\|h\|_{\H_0},
\end{equation}
we write from \eqref{cprofileb} that
\begin{equation}\label{profile-hs}
\sup_{|y|<1} \left|(1-{y}^2)^{\frac 1{p-1}}\left(\hat w_0(y,s) - \hat \theta_1\sum_{i=1}^{\hat k} (-1)^{i+1} \kappa(\hat d_i(s), y) \right)\right| \to 0 \quad \text{as} \quad  s \to \infty,
\end{equation}
Introducing
\[
\hat W_0(\xi,s) = (1-y^2)^{\frac 1{p-1}}\hat w_0(y,s)\mbox{ with }y=\tanh \xi
\]
and $\hat \kappa_0(\xi) = \kappa_0\cosh^{-\frac 2{p-1}} \xi$, we write from (i) of the Claim and \eqref{profile-hs} two expansions of $\hat w_0$ as $s\to \infty$ as follows:
\begin{eqnarray*}
&&\sup_{\xi\in\m R}|\hat W_0(\xi,s)- \sum_{i=1}^k(-1)^{i+1}\hat \kappa_0(\xi-\zeta_i^\sharp(s) +\arg\tanh d)|\to 0,\\
&&\sup_{\xi\in\m R}|\hat W_0(\xi,s)- \hat \theta_1\sum_{i=1}^{\hat k}(-1)^{i+1}\hat \kappa_0(\xi-\hat \zeta_i(s))|\to 0\mbox{ as }s\to \infty.
\end{eqnarray*}
Comparing these two expansions for the same function, we immediately see that
\[
\hat k =k,\;\;\hat \theta_1=1\mbox{ and }\hat \zeta_i(s)=\zeta_i^\sharp(s) -\arg\tanh d+o(1)\mbox{ as }s\to \infty.
\]
In particular, $\hat k\ge 2$, hence $0$ is a characteristic point for $\bar u$. Moreover, using the continuity estimate \eqref{contkd} on $\kappa(d,y)$, we see that \eqref{cprofileb} holds also with
\[
\hat \zeta_i(s)=\zeta_i^\sharp(s) -\arg\tanh d=\bar \zeta_i(s)+\zeta_0^\sharp-\arg\tanh d=\bar \zeta_i(s)+\zeta_0.
\]
where we used the definitions \eqref{dis-location} and \eqref{defd} of $\zeta_i^\sharp(s)$ and $d$ in this last line. This is the desired estimate in Theorem \ref{mainth}. It remains to prove the claim in order to conclude. 

\begin{proof}[Proof of the Claim]$ $\\
(i) Using the Hardy-Sobolev inequality of \eqref{hs}, we see that \eqref{cprofiled} yields the fact that
\begin{equation}\label{esths}
\sup_{s^\sharp\ge \tau^\sharp, |y^\sharp|<1} \left|(1-{y^\sharp}^2)^{\frac 1{p-1}}\left(w_0^\sharp(y^\sharp,s^\sharp) - \sum_{i=1}^k (-1)^{i+1} \kappa(d_i^\sharp(s^\sharp), y^\sharp) \right)\right| \to 0 \quad \text{as} \quad  \tau^\sharp \to \infty. 
\end{equation}
In the following, we will apply the Lorentz transform in the $w$ version \eqref{wlorentz} to this estimate to get the desired conclusion.\\
Note first that straightforward calculations give the fact that $\q T_d$ has a group structure, in the sense that
\[
{\q T}_{d_1} \circ {\q T}_{d_2} ={\q T}_{d_1 * d_2} \mbox{ where }d_1 * d_2= \frac{d_1 + d_2}{1+ d_1 d_2}.
\]
Therefore, since we have $\kappa(d) = {\q T}_d(\kappa_0)$ from the definition \eqref{defkd} of $\kappa(d,y)$, we see that
\begin{equation}\label{kd1d2}
\kappa(d*d_i^\sharp(s),y) = {\q T}_{d * d_i^\sharp(s)} (\kappa_0)={\q T}_{d} \circ {\q T}_{d_i^\sharp(s)} (\kappa_0) = 
{\q T}_{d} \kappa(d_i^\sharp(s)). 
\end{equation}
%
Since we have from \eqref{wlorentz} the fact that
\[
\frac{(1-y^2)(1-d^2)}{(1+dy)^2}=1-{y^\sharp}^2,
\]
we write from \eqref{wlorentz} and \eqref{kd1d2} for $s\ge -\log (1-t_d)$ and $|y|<1$, 
\begin{align}
&\left|(1-{y}^2)^{\frac 1{p-1}}\left(w_0(y,s) - \sum_{i=1}^k (-1)^{i+1} \kappa(d*d_i^\sharp(s), y) \right)\right|\nonumber\\
& = \left|(1-{y}^2)^{\frac 1{p-1}}\left({\q T_d}w_0^\sharp(y,s) - \sum_{i=1}^k (-1)^{i+1} {\q T_d}\kappa(d_i^\sharp(s), y) \right)\right|\nonumber\\
& = \left|\left(\frac{(1-{y}^2)(1-d^2)}{(1+dy)^2}\right)^{\frac 1{p-1}}\left(w_0^\sharp(y^\sharp,s^\sharp) - \sum_{i=1}^k (-1)^{i+1} \kappa(d_i^\sharp(s), y^\sharp) \right)\right|\nonumber\\
& \le \left|(1-{y^\sharp}^2)^{\frac 1{p-1}}\left(w_0^\sharp(y^\sharp,s^\sharp) - \sum_{i=1}^k (-1)^{i+1} \kappa(d_i^\sharp(s^\sharp), y^\sharp) \right)\right|\nonumber\\
& \qquad +\sum_{i=1}^k\left|(1-{y^\sharp}^2)^{\frac 1{p-1}}\left(\kappa(d_i^\sharp(s^\sharp), y^\sharp) -\kappa(d_i^\sharp(s), y^\sharp) \right)\right|,\label{int}
\end{align}
where $y^\sharp$ and $s^\sharp$  defined in \eqref{wlorentz} satisfy 
\begin{equation}\label{dist}
|s^\sharp-s|\le \frac 12 \log\frac{1+|d|}{1-|d|}.
\end{equation}
Using \eqref{hs} and the continuity relation \eqref{contkd} of $\kappa(d,y)$, we write
\begin{align*}
\sup_{|y^\sharp|<1}\left|(1-{y^\sharp}^2)^{\frac 1{p-1}}\left(\kappa(d_i^\sharp(s^\sharp), y^\sharp) -\kappa(d_i^\sharp(s), y^\sharp) \right)\right| & \le \left\|\kappa(d_i^\sharp(s^\sharp)) -\kappa(d_i^\sharp(s)) \right\|_{\q H_0}\\
& \le C|\arg\tanh d_i^\sharp(s^\sharp)-\arg\tanh d_i^\sharp(s)|.
\end{align*}
From the definition \eqref{dis-location} of $d_i^\sharp(s)$ and \eqref{dist}, we see that
\begin{equation}\label{cont-sur}
\sup_{|y^\sharp|<1}\left|(1-{y^\sharp}^2)^{\frac 1{p-1}}\left(\kappa(d_i^\sharp(s^\sharp), y^\sharp) -\kappa(d_i^\sharp(s), y^\sharp) \right)\right|
\le \frac {C|s^\sharp-s|}s
\le \frac{C(d)}s.
\end{equation}
Therefore, using \eqref{int} and \eqref{cont-sur}, we write 
\begin{align*}
&\sup_{|y|<1} \left|(1-{y}^2)^{\frac 1{p-1}}\left(w_0(y,s) - \sum_{i=1}^k (-1)^{i+1} \kappa(d*d_i^\sharp(s), y) \right)\right|\\
& \le \sup_{|y^\sharp|<1,\;\;s^\sharp\ge s-\frac 12 \log\frac{1+|d|}{1-|d|}}
\left|(1-{y^\sharp}^2)^{\frac 1{p-1}}\left(w_0^\sharp(y^*,s^*) - \sum_{i=1}^k (-1)^{i+1} \kappa(d_i^\sharp(s^\sharp), y^\sharp) \right)\right|+ \frac{C(d)}s.
\end{align*}
Using \eqref{esths}, we conclude the proof of (i) of the Claim.

\medskip

(ii) It is enough to prove that for some $s_2>s_1> 0$, we have
\begin{equation}\label{reduc}
I_{d,s_1,s_2} := \int_{s_1}^{s_2}\iint(w_0(y, s))^2+(\py w_0(y, s))^2+(\ps w_0(y, s))^2dy ds\le C(s_2,s_1,d).
\end{equation}
Indeed, if this is true, then, by the mean value theorem, there exists $s_d\in (s_1,s_2)$ such that
\[
\iint(w_0(y, s_d))^2+(\py w_0(y, s_d))^2+(\ps w_0(y, s_d))^2dy=\frac 1{s_2-s_1} I_{d,s_1,s_2} \le \frac{C(s_2,s_1,d)}{s_2-s_1}.
\]
Using the similarity transformation \eqref{def:w} in the other way, we get the desired estimate with $t_d = 1-e^{-s_d}$. Let us prove \eqref{reduc} then.\\
From the transformation \eqref{wlorentz}, \eqref{dist} and the similarity variables definition \eqref{def:w}, we see that
\begin{align*}
I_{d,s_1,s_2}&\le C(d) \int_{s_1-\frac 12 \log\frac{1+|d|}{1-|d|}}^{s_2+\frac 12 \log\frac{1+|d|}{1-|d|}}\iint(w_0^\sharp(y^\sharp, s^\sharp))^2+(\py w_0^\sharp(y^\sharp, s^\sharp))^2+(\ps w_0^\sharp(y^\sharp, s^\sharp))^2dy^\sharp ds^\sharp\\
&\le C(d,s_2,s_1)\int_{t_1(d)}^{t_2(d)}\int_{|x^\sharp|<1-t^\sharp}(u^\sharp(x^\sharp, t^\sharp))^2+ (\partial_x u^\sharp(x^\sharp, t^\sharp))^2+(\partial_t u^\sharp(x^\sharp, t^\sharp))^2.
\end{align*}
Since initial data for $u^\sharp$ is in $H^1\times L^2(-1,1)$ and equation \eqref{eq:nlw_u} is well-posed in $H^1\times L^2$ of sections of the backward light cone with vertex $(0,1)$ (see the paragraph right after \eqref{uinside}), this latter integral is bounded in terms of $d$, $s_1$ and $s_2$. This concludes the proof of the Claim.
\end{proof}
Since the Claim implies Theorem \ref{mainth}, this concludes the proof of Theorem \ref{mainth} too.
\end{proof}

\subsection{Prescribing more characteristic points}
We use the finite speed of propagation to derive the multiple characteristic points case (Corollary \ref{cormore}) from the one characteristic point case (Theorem \ref{mainth}).

\medskip

\begin{proof}[Proof of Corollary \ref{cormore}] Let us first remark that thanks to the invariance of equation \ref{eq:nlw_u} under space and time translations together with the following dilation
\[
\lambda \mapsto u_\lambda(\xi, \tau) = \lambda^{\frac 2{p-1}}u(\lambda \xi, \lambda \tau),
\]
we can prescribe the characteristic point and the blow-up time, in addition to the number of solitons and the center of mass. More precisely, given $\bar x\in \m R$, $\bar T>0$, $\bar k \ge 2$ and $\bar \zeta_0\in \m R$, there exists a blow-up solution $u_{\bar x, \bar T, \bar k, \bar \zeta}$ of equation \eqref{eq:nlw_u} in $\rm H^1_{\rm loc,u}\times \rm L^2_{\rm loc,u}(\m R)$ such that $\bar x$ is a characteristic point, $T(\bar x)=\bar T$ and $w_{\bar x}$ behaves as in \eqref{cprofile0} and \eqref{refequid1} with $k=\bar k$ and $\zeta_0=\bar \zeta_0$.

\medskip

Let us now consider $I=\{1,...,n_0\}$ or $I=\m N$ and for all $n\in I$, $x_n\in \m R$, $T_n>0$, $k_n \ge 2$ and $\zeta_{0,n}\in \m R$ such that
\begin{equation*}
x_n+T_n<x_{n+1}-T_{n+1}.
\end{equation*}
 From this condition, we can define a solution $u(x,t)$ of equation \eqref{eq:nlw_u} in $\rm H^1_{\rm loc,u}\times \rm L^2_{\rm loc,u}(\m R)$ by taking its initial data such that
\[
\forall n\in I,\;\;
\forall x\in (x_n-T_n, x_n+T_n),\;\;u(x,0)=u_{x_n, T_n, k_n, \zeta_{0,n}}(x,0)
\]
and the same for time derivatives. From the finite speed of propagation, this identity propagates in the cone $\q C_{x_n,T_n,1}$ for positive times, in the sense that
\[
\forall n\in I,\;\; \forall t\in [0,T_n),\;\;
\forall x\in (x_n-(T_n-t), x_n+(T_n-t)),\;\;u(x,t)=u_{x_n, T_n, k_n, \zeta_{0,n}}(x,t)
\]
and the same for time derivatives. As we did in Part 1 of the proof of Theorem \ref{mainth} above, we see by obvious arguments that $u(x,t)$ satisfies all the requirements of Corollary \ref{cormore}.
\end{proof}

\appendix

\section{Lyapunov's Theorem}\label{applyap}

We give the statement of the version of Lyapunov's Theorem that we crucially use in the proof of Theorem \ref{propref} given in Section \ref{secref} above.
\begin{thm}[Lyapunov's Theorem]
Let $K$ be a compact set of $\m R^k$, $X : K \to \m R^k$ be a vector field, and $x_0 \in \mathring{K}$, the interior of $K$. Denote by $(t,x) \mapsto \varphi(t,x)$ the flow of $X$ (at time $t$, starting at point $x$ at time 0).\\ 
Assume that $K$ is stable by the flow (in particular, for all $x \in K$, the flow is globally defined), and that there exists $L: K \to \m R$, a continuous function (Lyapunov) such that
\[ \forall x \in K \setminus \{ x_0 \}, \quad t \mapsto L(\varphi(t,x)) \text{ is (strictly) decreasing}.  \] 
Then, $x_0$ is a critical point for $X$ (the only one in $K$), and for all $x \in K \setminus \{ x_0 \}$, $L(x) > L(x_0)$ (so that $L$ reaches its infimum on $K$ at $x_0$ only).\\ 
Furthermore, for all $x \in K$, $\varphi(t,x) \to x_0$ as $t \to +\infty$ ($x_0$ is a global attractor in $K$).
\end{thm}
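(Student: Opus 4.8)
The plan is to run a LaSalle-type invariance-principle argument built around the $\omega$-limit sets of the flow. The heart of the matter is the following claim, which I would establish first: \emph{for every $x\in K\setminus\{x_0\}$, one has $\varphi(t,x)\to x_0$ as $t\to+\infty$.} Once this is known, every other assertion follows with little effort.

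To prove the claim, fix $x\in K\setminus\{x_0\}$. Since $K$ is compact and stable by the flow, the forward orbit $\{\varphi(t,x):t\ge 0\}$ lies in $K$ and is therefore precompact, so its $\omega$-limit set $\omega(x)$ is a nonempty compact subset of $K$. It is forward invariant: if $\varphi(t_n,x)\to p$ with $t_n\to\infty$, then for each fixed $t\ge 0$, $\varphi(t+t_n,x)=\varphi(t,\varphi(t_n,x))\to\varphi(t,p)$ by continuity of the flow, so $\varphi(t,p)\in\omega(x)$. Now $t\mapsto L(\varphi(t,x))$ is (strictly) decreasing and bounded below by $\min_K L$, hence converges to some $c\in\m R$; by continuity of $L$, this forces $L\equiv c$ on $\omega(x)$. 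Consequently, for any $p\in\omega(x)$ the map $t\mapsto L(\varphi(t,p))$ is constant equal to $c$ (by forward invariance), in particular not strictly decreasing, so the hypothesis forces $p=x_0$. Thus $\omega(x)=\{x_0\}$, and since the forward orbit is precompact this is equivalent to $\varphi(t,x)\to x_0$, proving the claim.

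Next I would show that $x_0$ is a critical point of $X$; this is the step I expect to be the main obstacle, since the hypotheses say nothing directly about the orbit issuing from $x_0$ itself. Suppose $X(x_0)\ne 0$. Because $x_0\in\mathring K$, the backward trajectory $\varphi(-\tau,x_0)$ is defined and lies in $K$ for all small $\tau>0$, and $y_0:=\varphi(-\tau,x_0)\ne x_0$ since the orbit is nonconstant. Writing $g(t)=\varphi(t,y_0)$, the claim gives $g(t)\to x_0$, hence $g'(t)=X(g(t))\to X(x_0)$ by continuity of $X$. But then $g(t+1)-g(t)=\int_t^{t+1}g'(s)\,ds\to X(x_0)$ as $t\to\infty$, while at the same time $g(t+1)-g(t)\to x_0-x_0=0$; hence $X(x_0)=0$, a contradiction. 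So $x_0$ is critical, and it is the only critical point in $K$: any critical point $p$ yields a constant orbit, along which $L$ is not strictly decreasing, so $p=x_0$ by hypothesis.

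Finally I would collect the remaining conclusions. The attractor statement for $x=x_0$ is now trivial ($\varphi(t,x_0)\equiv x_0$), so combined with the claim we get $\varphi(t,x)\to x_0$ for every $x\in K$. For $x\ne x_0$, the function $t\mapsto L(\varphi(t,x))$ is strictly decreasing and, by continuity of $L$ together with $\varphi(t,x)\to x_0$, tends to $L(x_0)$; therefore $L(x)=L(\varphi(0,x))>L(x_0)$, so $L$ attains its infimum on $K$ exactly at $x_0$. This exhausts all the assertions of the theorem.
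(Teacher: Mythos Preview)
Your proof is correct, and it takes a genuinely different route from the paper's. The paper proceeds in the reverse order: it first shows directly that $L$ attains its minimum on $K$ only at $x_0$ (because for $x\neq x_0$ one has $L(\varphi(1,x))<L(x)$), then proves that $x_0$ is critical by a forward-time sequence argument (if $\varphi(t_n,x_0)\neq x_0$ along $t_n\downarrow 0$, the values $L(\varphi(t_n,x_0))$ are strictly increasing yet must converge to $L(x_0)=\min_K L$, a contradiction), and finally proves the attractor statement by a bare-hands contradiction on the limiting value of $L$ along the orbit. Your argument instead runs LaSalle's invariance principle: you identify the $\omega$-limit set, show $L$ is constant there, and conclude $\omega(x)=\{x_0\}$; then you deduce criticality from convergence via the neat Ces\`aro-type identity $g(t+1)-g(t)=\int_t^{t+1}X(g(s))\,ds$, and only at the end recover the infimum property. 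Your route is more conceptual and arguably cleaner for readers familiar with dynamical-systems language; the paper's route is slightly more elementary in that it never invokes $\omega$-limit sets, never uses backward flow, and for the critical-point step uses only continuity of $L$ and of the flow rather than continuity of $X$ itself. Both are perfectly valid here since the paper already assumes Cauchy--Lipschitz uniqueness (hence $X$ continuous) and $x_0\in\mathring K$ guarantees local backward existence.
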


\begin{nb}
The stability of $K$ can follow from various assumptions on $L$, for example if $L$ is defined on a neighbourhood of $K$ where the decreasing assumption holds on, and $K = L^{-1}((-\infty,\ell])$ for some $\ell \in \m R$.
\end{nb}
\begin{proof}
Let $c= \inf \{ L(x)\; |\; x \in K \}$ and $I= L^{-1}(\{c\})$ be the set of points where $L$ reaches its infimum. $I \ne \varnothing$ because $L$ is continuous and $K$ is compact. Now if $x \in K \setminus \{ x_0 \}$, then $L(\varphi(1,x)) < L(\varphi (0,x)) =L(x)$, so that $x \notin I$. Hence $I = \{ x_0 \}$ and for all $x \in K \setminus \{ x_0 \}$, $L(x) > L(x_0)$. 

\medskip

We now prove that $x_0$ is a critical point. We claim that it is enough to prove that
\begin{equation}\label{goal}
\mbox{there exists }t_0>0\mbox{ such that }\forall t\in[0,t_0],\;\varphi(t,x_0)=x_0.
\end{equation}
Indeed, if \eqref{goal} is true, then by the uniqueness in the Theorem of Cauchy-Lipschitz, we see that $\varphi(t,x_0) = x_0$ for all $t\ge 0$ and $x_0$ is a critical point. Let us then prove \eqref{goal}.\\
Assume by contradiction that \eqref{goal} does not hold. Then, there exists a decreasing sequence of times $t_n \to 0$ such that $\varphi(t_n,x_0) \ne x_0$. As $\varphi(t_{n-1},x_0) = \varphi(t_{n-1}-t_n, \varphi(t_n,x_0))$, the sequence $L(\varphi(t_n,x_0))$ is strictly increasing, hence, it has a limit $a > \varphi(t_1,x_0) > c$. But $t_n \to 0$ so that $\varphi(t_n,x_0) \to x_0$ by continuity of the flow, and by continuity of $L$, $L(\varphi(t_n,x_0)) \to L(x_0) =c$, and we reached a contradiction. Thus, \eqref{goal} holds and $x_0$ is a critical point.

\medskip

Let us finally prove that $\varphi(t,x) \to x_0$. Note that it is enough to prove that 
\begin{equation}\label{but}
L(\varphi(t,x)) \to c\mbox{ as }t\to \infty.
\end{equation}
Indeed, if a sequence $(y_n) \subset K$ is such that $L(y_n) \to c$, then $y_n \to x_0$, otherwise, there exists a subsequence $z_n$ of $y_n$ and $\e >0$ such that for all $n$, $z_n \in K \setminus B(x_0,\e)$. This latter set is compact, so that up to a subsequence which we also denote $z_n$, $z_n$ converges to some $z \in K \setminus B(x_0,\e)$. By continuity, $L(z)=c=L(x_0)$, so that $z=x_0$ from the fact that $I=\{x_0\}$, and we reached a contradiction. Let us then prove \eqref{but}.\\
Assume by contradiction that \eqref{but} does not hold. Then, there exists $\delta>0$ such that the non-increasing function 
\begin{equation}\label{conv}
L(\varphi(t,x))\to c+\delta\mbox{ as }t \to +\infty.
\end{equation}
 As $(\varphi(t,x))_t$ remains in $K_\delta := K \cap L^{-1}([c+\delta,\infty))$ which is a compact, there exists an increasing sequence of times $t_n \to +\infty$ and $\bar x \in K_\delta$ such that $\varphi(t_n,x) \to \bar x$. Note in particular that 
\begin{equation}\label{diff}
\bar x\ne x_0
\end{equation}
Now let $t \in \m R$ and consider the flow starting from $\bar x$. By continuity of the flow, $\varphi(t,\bar x) = \lim_n \varphi(t,\varphi(t_n,x)) = \lim_n \varphi(t+t_n,x)$. As $L$ is continuous, $L(\varphi(t+t_n,x)) \to L(\varphi(t,\bar x))$ on the one hand. On the other hand, from \eqref{conv}, we have $L(\varphi(t+t_n,x)) \to c+\delta$. Hence, for any $t\in\m R$, $L(\varphi(t,\bar x))=c+\delta$. This is a contradiction because we are not on the stationary trajectory (see \eqref{diff}) and $L$ is strictly decreasing outside that trajectory. Thus,  $\varphi(t,x) \to x_0$ as $t\to \infty$. This concludes he proof of Lyapunov's Theorem.
\end{proof}

\section{Dynamics of equation \eqref{eq:nlw_w} near multi-solitons}\label{appdyn}

This section is devoted to the proof of Proposition \ref{propdyn}. Since the proof needs only minor refinements with respect to the proofs of Claims 3.8 and 3.9 in \cite{MZisol10} and Proposition 3.2 in \cite{MZajm11}, we only give indications on the refinements. Hence, this section is not self-contained, since making it self-contained would add many pages with no new techniques with respect to \cite{MZajm11} and \cite{MZisol10}.

\begin{proof}[Proof of Proposition \ref{propdyn}] We first recall from \cite[Appendix C]{MZisol10} the equation satisfied by $q$ defined in \eqref{defq} for all $s\in [s_0, \bar s)$:
\begin{align}
\ds\frac \partial {\partial s}
\vc{q_1}{q_2} & =\ll \vc{q_1}{q_2}
-\sum_{j=1}^\k(-1)^j\left[(\nu_j'(s)-\nu_j(s))\pnu\kappa^*+d_j'(s)\partial_d \kappa^*\right](d_j(s),\nu_j(s),y)\nonumber\nonumber \\
& \qquad +\vc{0}{R} + \vc{0}{f(q_1)}
\label{eqq*}
\end{align}
where
\begin{align*}
\ll\vc{q_1}{q_2} & = \vc{q_2}{\L q_1+\psi q_1-\frac{p+3}{p-1}q_2-2y\py q_2},\\
\psi(y,s)& = p|K^*_1(y,s)|^{p-1} -\frac{2(p+1)}{(p-1)^2}, \qquad
K^*_1(y,s) = \sum_{j=1}^\k  (-1)^i\kappa^*_1(d_j(s),\nu_j(s),y), \\
f(q_1) & = |K^*_1+q_1|^{p-1}(K^*_1+q_1)- |K^*_1|^{p-1}K^*_1- p|K^*_1|^{p-1} q_1, \\
R & = |K^*_1|^{p-1}K^*_1- \sum_{j=1}^\k (-1)^j\kappa^*_1(d_j(s),\nu_j(s),y)^p.
\end{align*}
We now proceed to the justification of the 3 estimates of Proposition \ref{propdyn}, based on Lemma C2., Claims 3.8 and 3.9 of \cite{MZisol10}, together with Proposition 3.2 in \cite{MZajm11}.\\ 
Since estimate \eqref{conmod} holds for all $s\in[s_0, \bar s)$, 
 all those results hold provided that $s_0$ is large enough. As a matter of fact, we take below $s\in[s_0, \bar s)$ and $s_0$ large enough.\\
- \eqref{est:nu} follows from \cite[Lemma C.2 (i)]{MZisol10} (use the last identity of the proof of (i))\\ 
- \eqref{est:q} follows from \cite[Claim 3.8 (ii), (iii)]{MZisol10} (see the proof of Claim 3.9 (ii)there).\\
- With respect to the analysis in \cite{MZisol10}, \eqref{est:zeta} needs some refinements, 
see \cite{MZajm11}. Note first that we have a rough estimate from  \cite[Lemma C.2 (i)]{MZisol10} which we recall:
\begin{equation}\label{rough}
\frac{|d_i'(s)|}{1-d_i(s)^2}=|\zeta_i'(s)| \le C  \left( \| q \|_{\q H}^2 + J + \| q \|_{\q H} \frac{|\nu_i|}{1-d_i^2} \right).
\end{equation}
Using the proof of this statement in that paper and \cite[Appendix C]{MZajm11}, we derive 
\begin{align} \label{2proj}
\left|(-1)^{i+1}d_i'\pp_0^{d_i^*}(\partial_d \kappa^*(d_i,\nu_i)) + \pp_0^{d_i^*}((0,R))\right| \le C \left( \| q \|_{\q H}^2 +J^{1+\delta_1}+ \| q \|_{\q H} \frac{|\nu_i|}{1-d_i^2} \right)
\end{align}
for some $\delta_1>0$. 
The term $\pp_0^{d_i^*}(\partial_d \kappa^*(d_i,\nu_i))$ has been evaluated in \cite[Claim 2.2]{MZisol10}, but we need to further refine it, given the fact that $\frac{|\nu_i|}{1-|d_i|}$ is small (see \eqref{conmod}). Using estimates (2.36) and (2.27) given in the proof of \cite[Claim 2.2]{MZisol10}, we see that 
\begin{multline}
\frac 1{c_0L_i}\pp_0^{{d_i^*}}(\partial_d \kappa^*(d_i,\nu_i)) = \label{sonia}\\
-\frac 4{p-1}\iint Y^2(1-Y^2)^{\frac 2{p-1}-1}dY+(1-x_i) \left(x_id_i^2+\frac{p+1}{p-1}\right)\iint \frac{Y^2(1-Y^2)^{\frac 2{p-1}}}{1-x_i^2d_i^2Y^2} dY
\end{multline}
where $c_0>0$,
\[
L_i =\frac{2\kappa_0(1-d_i^2)^{\frac 1{p-1}-1}(1+\nu_i)^{-\frac {p+1}{p-1}}}{(p-1)(1-{d_i^*}^2)^{\frac 1{p-1}}} \quad \text{and} \quad x_i=\frac 1{\nu_i+1}.
\]
Since $x_id_i=\frac{d_i}{1+\nu_i}=d_i^*$, we write
\begin{equation}\label{e1}
\iint \frac{Y^2(1-Y^2)^{\frac 2{p-1}}}{1-x_i^2d_i^2Y^2} dY\le \frac 1{1-{d_i^*}^2}\iint Y^2(1-Y^2)^{\frac 2{p-1}} dY.
\end{equation}
Using \eqref{conmod} and the definition \eqref{defJ} of $\bar J$, we see that for $s_0$ large enough, we have
\begin{equation}\label{e2}
\frac 1{1-{d_i^*}^2}\le \frac C{1-d_i^2} \quad \text{and} \quad |L_i-\frac{2\kappa_0}{(p-1)(1-d_i^2)}|\le C\frac{|\nu_i|}{1-d_i^2}\le C\bar J.
\end{equation}
Using \eqref{sonia}, \eqref{e1} and \eqref{e2}, we see that
\begin{equation}\label{term1}
\left|\pp_0^{{d_i^*}}(\partial_d \kappa^*(d_i,\nu_i))+\frac {8\kappa_0c_0}{(p-1)^2(1-d_i^2)}\iint Y^2(1-Y^2)^{\frac 2{p-1}-1}dY\right|\le C\bar J.
\end{equation}
Now, we estimate the second term of \eqref{2proj}. Proceeding as for the proof of \cite[Proposition 3.2]{MZajm11} given in Section 3.3 of that paper, we derive the fact that 
\begin{equation}\label{Rproj1}
\left|\pp_0^{d_i^*}((0,R))-c_2(p)(-1)^i\lambda_i^{p-1}\left[\lambda_{i-1}e^{-\frac 2{p-1}(\zeta_i-\zeta_{i-1})}-\lambda_{i+1}e^{-\frac 2{p-1}(\zeta_{i+1}-\zeta_i)}\right]\right|\le CJ^{1+\delta_2}
\end{equation}
for some $c_2(p)>0$, $\delta_2>0$ and $\lambda_j(s) =\frac{(1-d_j(s)^2)^{\frac 1{p-1}}}{[(1+\nu_j(s))^2-d_j(s)^2]^{\frac 1{p-1}}}$,  where by convention $\zeta_0(s)\equiv-\infty$ and $\zeta_{k+1}(s)\equiv +\infty$. Since we have from \eqref{conmod} and \eqref{defJ}, $|\lambda_j(s) -1|\le C\frac{|\nu_j(s)|}{1-|d_j(s)|}\le C\bar J(s)$ for $s_0$ large enough, we see from \eqref{Rproj1} that
\begin{equation}\label{term2}
\left|\pp_0^{d_i^*}((0,R))-c_2(p)(-1)^i\left[ e^{-\frac 2{p-1}(\zeta_i-\zeta_{i-1})}-e^{-\frac 2{p-1}(\zeta_{i+1}-\zeta_i)}\right] \right|\le CJ^{1+\delta_2}+CJ\bar J.
\end{equation}
Recall $d_i=-\tanh \zeta_i$ hence $\zeta_i'=-\frac{d_i'}{1-d_i^2}$. Using \eqref{2proj}, \eqref{term1}, \eqref{rough} and \eqref{term2}, we see that \eqref{est:zeta} is proved. 
Finally, we would like to stress the fact that since our computations are based on those appearing in the proof of \cite[Proposition 3.2]{MZajm11}, the constant $c_1(p)>0$ we get in \eqref{est:zeta} is the same as in that statement. 
\end{proof}

\section{Computation of the eigenvalues of  the matrix $M$ \eqref{defM}}\label{appmi}

We finish the proof of Lemma \ref{eigenm} here by proving formula \eqref{defmi}.Recall from \eqref{defsi} that
\[ \sigma_i = \frac{i(k-i)}{2}, \quad \text{for }  i=0, \dots, k. \]
For $j \in \llbracket 1, k \rrbracket$, we define the tridiagonal squared matrix of size $k-j+1$ by
\begin{equation} \label{mjk}
M_k^j = \begin{pmatrix}
- \sigma_j & \sigma_1 & \\
\sigma_j & -\sigma_1 - \sigma_{j+1} & \sigma_2 \\
0& \sigma_{j+1} & -\sigma_2 - \sigma_{j+1} & \sigma_3 \\
\\
& \ddots & & \ddots & \ddots \\
\\
& & & &   \sigma_{k-j+1} \\ 
& & & \sigma_{k-2}  & -\sigma_{k-j-1} - \sigma_{k-1} &  \sigma_{k-j} \\
& & & & \sigma_{k-1} &  -\sigma_{k-j}
\end{pmatrix}
\end{equation}

that is, on the upper diagonal lie $\sigma_1$, $\sigma_2$, \dots, $\sigma_{k-1}$, on the lower diagonal $\sigma_{j}$, $\sigma_{j+1}$, \dots, $\sigma_{k-1}$, and the sum of the coefficient of each \emph{column} is zero. Note that $M_k^k$ is the zero $1 \times 1$ matrix and that $M = M_k^1$. We now prove the following Lemma, which give the result for $j=1$.

\begin{lem}
$M_k^j$ is similar to the diagonal matrix 
\[ \mathrm{diag}(0, -j, - j - (j+1), - j - (j+1) - (j+2), \dots, - j - (j+1) - \cdots - (k-1)). \]
\end{lem}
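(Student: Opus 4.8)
The plan is to prove the statement by downward induction on $j$, from $j=k$ (where $M_k^k = (0)$ and the claim is trivial) to $j=1$. The key algebraic observation is that the matrices $M_k^j$ are nested in a very rigid way: deleting the first row and first column of $M_k^j$ produces a matrix which differs from $M_k^{j+1}$ only by a rank-one (indeed diagonal-supported) correction in its top-left entry, coming from the fact that $-\sigma_1-\sigma_{j+1}$ in $M_k^j$ should be $-\sigma_{j+1}$ in $M_k^{j+1}$. First I would set up the characteristic polynomial $P_k^j(\lambda) = \det(M_k^j - \lambda I)$ and expand along the first column, using the tridiagonal structure: this yields a two-term recursion
\[
P_k^j(\lambda) = (-\sigma_j - \lambda)\, Q(\lambda) - \sigma_1 \sigma_j\, R(\lambda),
\]
where $Q$ and $R$ are characteristic-type determinants of the lower-right blocks. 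The goal is to show, via the induction hypothesis on $j+1$, that the root set of $P_k^j$ is exactly $\{0, -j, -j-(j+1), \dots, -j-(j+1)-\cdots-(k-1)\}$, i.e. it is obtained from the root set for $j+1$ by shifting every nonzero root down by $j$ and keeping the root $0$.

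The cleanest route is probably not to track determinants but to exhibit the similarity directly. Since each $M_k^j$ has zero column sums, the row vector ${}^t(1,1,\dots,1)$ is always a left null vector, so $0$ is always an eigenvalue; this handles the first diagonal entry. For the remaining eigenvalues I would look for an explicit conjugation. Observe that $M_k^j$ and $M_k^{j+1}$ have the \emph{same} off-diagonal entries in their common block, and that passing from $j$ to $j+1$ uniformly increases each $\sigma$-index appearing on the sub/super-diagonals; the shift of eigenvalues by $-j$ suggests conjugating by a diagonal matrix $D = \mathrm{diag}(\delta_1, \dots, \delta_{k-j+1})$ chosen so that $D^{-1} M_k^j D$ acquires a block-triangular form with $M_k^{j+1}$ (or a shift thereof) as a sub-block. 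Concretely, I would try $\delta_m$ proportional to a ratio of the form $\prod \sigma_{\cdot}/\prod \sigma_{\cdot}$ so that the sub- and super-diagonal entries become equal (symmetrizing the tridiagonal matrix), then read off the spectrum from the known $j+1$ case plus the extra root $-j$.

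The main obstacle I expect is bookkeeping: making the index shifts in \eqref{mjk} precise near the two ends of the matrix (the first and last rows, where the column-sum-zero condition forces asymmetric entries $-\sigma_j$ and $-\sigma_{k-j}$ rather than $-\sigma_{i-1}-\sigma_i$), and checking that the diagonal conjugation is well-defined, i.e. that no $\sigma$ in the denominators vanishes — which is true precisely because the relevant indices range over $\{1,\dots,k-1\}$ where $\sigma_i = i(k-i)/2 > 0$. Once the recursion $P_k^j(\lambda) = (\lambda+j)\,P_k^{j+1}(\lambda+j)\big/(\text{something})$, or the corresponding similarity, is established and the base case $M_k^k=(0)$ is in hand, the induction closes and specializing to $j=1$ gives the eigenvalues $0, -1, -1-2, \dots, -1-2-\cdots-(k-1)$, that is $-m_i = -i(i-1)/2$ for $i=1,\dots,k$, which is exactly \eqref{defmi} and completes the proof of Lemma \ref{eigenm}.
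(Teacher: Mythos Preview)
Your inductive framework (downward on $j$, base case $M_k^k=(0)$, and the observation that zero column sums force $0$ to be an eigenvalue) matches the paper's. But the concrete mechanism you propose for the inductive step has a genuine gap: a \emph{diagonal} conjugation of a tridiagonal matrix with nonvanishing off-diagonal entries is again tridiagonal with nonvanishing off-diagonal entries, so $D^{-1}M_k^j D$ can never acquire the block-triangular shape you are hoping for. Symmetrizing via $D$ is fine, but it does not by itself relate the spectrum of $M_k^j$ to that of $M_k^{j+1}$; you would still need a separate argument linking the two, and your sketch does not supply one. Likewise, the cofactor expansion route you outline yields a two-term recursion in the minors of $M_k^j$, not a relation between $P_k^j$ and $P_k^{j+1}$; the step ``$P_k^j(\lambda)=(\lambda+j)P_k^{j+1}(\lambda+j)/(\cdots)$'' is asserted but not derived, and it is not clear how to get there from your recursion without essentially rediscovering the missing idea.

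That missing idea, which the paper supplies, is to conjugate not by a diagonal matrix but by the lower-triangular all-ones matrix
\[
A=\begin{pmatrix}1&&0\\ \vdots&\ddots&\\ 1&\cdots&1\end{pmatrix},
\qquad
A^{-1}=\begin{pmatrix}1&0&\cdots&0\\ -1&1&\ddots&\vdots\\ 0&\ddots&\ddots&0\\ 0&\cdots&-1&1\end{pmatrix}.
\]
Because the columns of $M_k^j$ sum to zero, left-multiplying by $A$ (which replaces rows by partial sums) kills one row; then right-multiplying by $A^{-1}$ (which takes successive differences of columns) produces a genuinely block-triangular matrix. A short computation using the quadratic identity $-\sigma_i-\sigma_{j+i-1}+\sigma_{j+i}+\sigma_{i-1}=-j$ shows
\[
A\,M_k^j\,A^{-1} \;=\; -j\,\Id_{k-j+1} \;+\;
\begin{pmatrix}
M_k^{j+1} & *\\
0 & j
\end{pmatrix},
\]
from which the induction closes immediately. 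This ``partial-sum'' conjugation is the step your plan is missing.
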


Indeed, when $j=1$, we see that the eigenvalues of $M^1_k=M$ are given by $0$, $-1$, $-\frac{2\times 3}2$,\ldots,$-\frac{(k-1)\times k}2$, which is the desired conclusion announced in \eqref{defmi}. Let us prove the lemma now.
\begin{proof} 
We will prove the result by decreasing induction on $j$. The result is obvious for $j=k$. If it holds for $j+1$, let us prove it for $M_k^j$.
Let $A$ be the $(k-j+1) \times (k-j+1)$ matrix
\[ A = \begin{pmatrix}
1 & & 0 \\
\vdots & \ddots \\
1 & \cdots & 1 
\end{pmatrix}, \quad \text{so that} \quad A^{-1} =
\begin{pmatrix}
1 & 0 &  \cdots & & 0\\
-1 &  & \ddots &  & \vdots \\
0 & \ddots & \ddots &   \\
\vdots & \ddots & \ddots &  & 0 \\
0 & \cdots & 0 & -1 & 1 
\end{pmatrix}. \]
Let us conjugate $M_j^k$ by $A$ and compute:
\begin{align*}
A M_k^j A^{-1} & = \begin{pmatrix}
-\sigma_j & \sigma_1 & 0 & \cdots & 0\\
 0 & -\sigma_{j+1} & \sigma_2 & \ddots & \vdots\\
 \vdots & \ddots & \ddots & \ddots & 0 \\
 0 & \cdots & 0 & - \sigma_{k-1} & \sigma_{k-j} \\
0 & & \cdots & 0 & 0
\end{pmatrix} A^{-1} \\
& = \begin{pmatrix}
- \sigma_1 - \sigma_j & \sigma_1 & 0 & \cdots & 0 & 0 \\
\sigma_{j+1} & -\sigma_2  - \sigma_{j+1} & \sigma_2 & \ddots & \vdots & \vdots \\
0 & \sigma_{j+2} & -\sigma_3  - \sigma_{j+2} & \sigma_3 \\
\vdots & \ddots & \ddots & \ddots & 0 & 0 \\
& & & & \sigma_{k-j-1} & 0 \\
0 & \cdots & 0 & \sigma_{k-1} & -\sigma_{k-j} - \sigma_{k-1} & \sigma_{k-j}  \\
0 & & \cdots & 0 & 0 & 0
\end{pmatrix} \\
& = - j \Id_{k-j+1} + \begin{pmatrix}
& & & \vline & 0 \\
& & & \vline & \vdots \\
& M_k^{j+1} & & \vline & 0 \\
& & & \vline & \sigma_{k-1} \\
\hline
0 & \cdots & 0 & \vline &  j
\end{pmatrix},
\end{align*}
where the last line comes from the fact that
\begin{multline*}
-\sigma_1 - \sigma_j + \sigma_{j+1}  = \sigma_1 - \sigma_2  - \sigma_{j+1} + \sigma_{j+2} = \cdots \\
 = \sigma_{k-j-2} - \sigma_{k-j-1} - \sigma_{k-2} + \sigma_{k-1} = \sigma_{k-j-1} - \sigma_{k-j} - \sigma_{k-1} = -j
\end{multline*}
(because $\sigma_i$ is quadratic in $i$ with highest order term $-i^2/2$).

The induction hypothesis gives that the right-hand side block matrix is diagonalizable, with eingenvalues $j$, $0$, $- (j+1)$, $-((j+1) + (j+2))$, \dots, $-((j+1) + \cdots + (k-1))$. Hence $M^j_k$ is diagonalizable with eigenvalues $0$, $-j$, $-(j+ (j+1))$, $-(j+(j+1) + (j+2))$, \dots, $-(j+(j+1) + \cdots + (k-1))$. This concludes the induction and concludes the proof of Lemma \ref{eigenm} too.
\end{proof}

\frenchspacing
\bibliographystyle{plain}

\noindent{\bf Address}:\\
CMLS - École Polytechnique
91128 Palaiseau Cedex - France.\\
\vspace{-7mm}
\begin{verbatim}
e-mail: cote@math.polytechnique.fr 
\end{verbatim}
Universit\'e Paris 13, Institut Galil\'ee, 
Laboratoire Analyse, G\'eom\'etrie et Applications, CNRS UMR 7539,
99 avenue J.B. Cl\'ement, 93430 Villetaneuse, France.\\
\vspace{-7mm}
\begin{verbatim}
e-mail: Hatem.Zaag@univ-paris13.fr
\end{verbatim}

\end{document}